\documentclass[11pt,letterpaper]{amsart}
\usepackage[utf8]{inputenc}
\usepackage[textsize=scriptsize]{todonotes}
\usepackage[expansion=false]{microtype}
\usepackage{amsmath, amsfonts, amssymb, amsthm, amsopn, amscd}
\usepackage{eucal,xspace}
\usepackage[pdfusetitle,unicode,hidelinks]{hyperref}
\usepackage{tikz-cd}
\usepackage{enumerate}
\usepackage{bbm}
\usepackage{comment}
\usepackage[all]{xy}

\usepackage[margin=1in]{geometry}
\numberwithin{equation}{section}

\usepackage{etoolbox}
\newtoggle{draft}

\definecolor{britishracinggreen}{rgb}{0.0, 0.26, 0.15}

\iftoggle{draft} {
\newcommand{\NB}[1]{\todo[color=gray!40]{#1}}
\newcommand{\bid}[1]{{\color{britishracinggreen}#1}}
\newcommand{\bidf}[1]{\bid{\footnote{BID: #1}}}
\newcommand{\tom}[1]{\footnote{\color{red}Tom: #1}}
\newcommand{\tombubble}[1]{\todo[color=green!40]{#1}}
\newcommand{\pao}[1]{\footnote{\color{red}PA: #1}}
 }{ 
\newcommand{\NB}[1]{}
\newcommand{\bid}[1]{}
\newcommand{\bidf}[1]{}
\newcommand{\tom}[1]{}
\newcommand{\tombubble}[1]{}
\newcommand{\pao}[1]{}
}

\newcommand{\sslash}{\mathbin{/\mkern-6mu/}}
\newcommand{\cell}{\mathrm{cell}}
\newcommand{\compl}{{}^{{\kern -.5pt}\wedge}_{\ell}}
\newcommand{\comtwo}{{}^{{\kern -.5pt}\wedge}_{2}}
\newcommand{\comrho}{{}^{{\kern -.5pt}\wedge}_{\rho}}

\theoremstyle{plain}
\newtheorem*{theorem*}{Theorem}
\newtheorem{theorem}[equation]{Theorem}
\newtheorem{proposition}[equation]{Proposition}
\newtheorem{lemma}[equation]{Lemma}
\newtheorem{corollary}[equation]{Corollary}

\theoremstyle{definition}
\newtheorem{definition}[equation]{Definition}
\newtheorem{notation}[equation]{Notation}
\newtheorem{example}[equation]{Example}
\newtheorem{remark}[equation]{Remark}
\newtheorem*{remark*}{Remark}

\def\Map{\mathrm{Map}}
\def\map{\mathrm{map}}

\let\scr=\mathcal
\let\bb=\mathbb

\def\Z{\bb Z}

\def\C{\bb C}
\def\A{\bb A}
\def\P{\bb P}

\def\FF{\bb F}
\def\CC{\bb C}

\def\1{\mathbbm{1}}

\def\M{\mathrm{M}}
\def\H{\mathrm{H}}

\def\THH{\mathrm{THH}}

\def\NAlg{\mathrm{NAlg}}
\def\NA{\scr{NA}}
\def\DA{\scr{DA}}
\def\DAlg{\mathrm{DAlg}}
\def\NSym{\mathrm{NSym}}

\newcommand{\SH}{\mathcal{SH}}

\def\ph{\mathord-}

\def\op{\mathrm{op}}
\def\free{\mathrm{free}}

\let\cat=\mathrm
\def\Spc{\cat S\mathrm{pc}}
\def\Shv{\cat S\mathrm{hv}}
\def\SH{\mathcal S\mathcal H}

\def\DA{\mathcal D\mathcal A}
\def\Mod{\mathcal M od}
\def\et{\mathrm{\acute et}}
\def\proet{\mathrm{pro\acute et}}
\def\Cat{\mathcal{C}\mathrm{at}{}}

\def\PSh{\mathcal{P}}
\def\Sm{{\cat{S}\mathrm{m}}}

\def\Fun{\mathrm{Fun}}
\def\Vect{\mathrm{Vect}}
\def\End{\mathrm{End}}

\DeclareMathOperator*{\colim}{colim}
\def\comp{\wedge}
\subjclass[2010]{Primary: 14F42, 55P42; Secondary: 14F20, 18N55, 18N60}

\def\ceil#1{\left \lceil #1 \right \rceil }

\def\Sp{\mathrm{Sp}}

\def\CAlg{\mathrm{CAlg}{}}
\def\Sym{\mathrm{Sym}}
\def\LSym{\mathrm{LSym}}
\def\NAlg{\mathrm{NAlg}{}}

\newcommand{\wequi}{\simeq}
\def\adj{\rightleftarrows}

\newcommand{\Gm}{{\mathbb{G}_m}}
\newcommand{\Gmp}[1]{{\mathbb{G}_m^{\wedge #1}}}

\DeclareRobustCommand{\ul}{\underline}

\newcommand{\eg}{e.g.,\xspace}
\newcommand{\ie}{i.e.,\xspace}

\newcommand{\MF}{\M\FF_{2}}
\newcommand{\MFtau}{\overline{\M\FF}_2}

\newcommand{\tensor}[1]{\odot^{{#1}}} 
\newcommand{\heart}{\heartsuit}

\title[Motivic Hochschild homology of $\MF$ over algebraically closed fields]{Motivic Hochschild homology of mod-$2$ motivic cohomology over algebraically closed fields}

\author{Tom Bachmann}
\address{Mathematischen Institut, JGU Mainz, Germany}
\email{tom.bachmann@zoho.com}

\author{Robert Burklund}
\address{Department of Mathematical Sciences, University of Copenhagen, Denmark}
\email{rb@math.ku.dk}

\author{Bj{\o}rn Ian Dundas}
\address{Department of Mathematics, University of Bergen, Norway}
\email{dundas@math.uib.no}

\author{Paul Arne {\O}stv{\ae}r}
\address{Department of Mathematics ``F. Enriques'', University of Milan, Italy}
\email{paul.oestvaer@unimi.it}

\begin{document}

\begin{abstract}
We compute the tensor $\Gm \tensor{\1} \MF$ of the mod-$2$ motivic cohomology spectrum with the multiplicative group scheme in normed motivic spectra over the complex numbers $\CC$. We prove that the canonical map
\[
\MF[\mu_{2,1}] \to \Gm \tensor{\1} \MF
\]
from the free $\scr E_1$-$\MF$-algebra on a class in bidegree $(2,1)$ is an equivalence. This gives a motivic analog of Bökstedt periodicity for the $\Gm$-tensor.

The proof proceeds by comparing the $\tau$-inverted and $\tau$-reduced forms of $\Gm \tensor{\1} \MF$. After inverting $\tau$, the calculation reduces to classical B{\"o}kstedt periodicity via Betti realization. The reduction modulo $\tau$ is governed by a comparison between normed algebra structures and derived algebra structures on cellular $\MF/\tau$-modules. This comparison produces divided power operations and leads to mixed Cartan and Adem relations intertwining normed and topological power operations.

A key input is a detailed analysis of motivic extended powers of spheres and their $\tau$-torsion structure. In contrast with the corresponding simplicial-circle calculation due to Dundas-Hill-Ormsby-{\O}stv{\ae}r, the large families of $\tau$-torsion classes disappear for $\Gm \tensor{\1} \MF$, leaving a considerably more rigid algebraic structure.

\end{abstract}
\maketitle
\tableofcontents

\section{Introduction}
Topological Hochschild homology has long occupied a central position in homotopy theory. Beginning with Bökstedt's foundational calculation of the topological Hochschild homology of $H\FF_2$, it has become clear that Hochschild-type constructions encode subtle multiplicative structure in stable homotopy theory and provide a bridge between algebraic $K$-theory and structured ring spectra. One of the striking features of the classical theory is the appearance of periodicity phenomena: $\THH(H\FF_2)$ is generated by a single class in degree $2$.

It seems natural to us to look for motivic analogs of these constructions and computations.
Let $\CC$ denote the field of complex numbers. We write $\SH(\CC)$ for the stable motivic homotopy category over $\CC$, and let $\MF$ denote the mod-$2$ motivic cohomology spectrum, with coefficients $\pi_{**}\MF = \FF_2[\tau]$. 
One way of defining $\THH$ is by a tensor in $\scr E_\infty$-ring spectra with $S^1$, so one might propose to study $S^1 \tensor{\1} \MF$.
This was carried out in \cite{zbMATH07937618}, where a complete description was obtained.
The authors found a picture much different from topology, with large families of $\tau$-torsion classes arising from the interaction between the motivic Steenrod algebra and the deformation parameter $\tau$. Indeed, the abundance of such torsion classes is one of the principal structural features of the calculation.
Here we propose a different motivic enhancement of $\THH$: instead of tensoring with $S^1$, we tensor with $\Gm$, and instead of tensoring in the category of $\scr E_\infty$-rings, we tensor in the category of \emph{normed spectra} (a motivic enhancement of the theory of $\scr E_\infty$-rings).
As an exploratory first test case, we study $\Gm \tensor{\1} \MF$.
The behavior of this new object is quite different from $S^1 \tensor{\1} \MF$.
Although the calculation remains genuinely motivic, the large families of $\tau$-torsion classes present for $S^1 \tensor{\1} \MF$ disappear. What remains is a more rigid algebraic structure, controlled by normed power operations and by derived algebra structures after reduction modulo $\tau$.
From this perspective, the calculation of $\Gm \tensor{\1} \MF$ is closer in form to the classical topological calculation than the corresponding simplicial-circle calculation. The motivic input remains essential, but it enters through the motivic bidegree, the geometry of $\Gm$, and the comparison between normed and derived algebra structures, rather than through the large families of $\tau$-torsion classes appearing in $S^1 \tensor{\1} \MF$.

\begin{remark*}
Write $\CAlg(\SH(\CC))$ for the $\infty$-category of $\scr E_\infty$-rings in $\SH(\CC)$ and $\NAlg(\SH(\CC))$ for the category of \emph{normed spectra}, which is an enhancement of the theory first studied in \cite{norms}.
They are linked by a cocontinuous forgetful functor $\NAlg(\SH(\CC)) \to \CAlg(\SH(\CC))$.
For a motivic space $X \in \Spc(\CC)$, we have both in $\NAlg(\SH(\CC))$ and $\CAlg(\SH(\CC))$ a notion of \emph{tensoring by $X$} (see \S\ref{subsub:tensors} for details).
We write $X \tensor{\1} E \in \NAlg(\SH(\CC))$ for the tensor computed in $\NAlg(\SH(\CC))$, and $X \tensor{\1}_\CAlg E \in \CAlg(\SH(\CC))$ for the tensor computed in $\CAlg(\SH(\CC))$.
Since $\NAlg(\SH(\CC)) \to \CAlg(\SH(\C)))$ preserves colimits, we see that for any ordinary space $X$ (\eg $X = S^1$) viewed as a motivic space, and $E \in \NAlg(\SH(\CC))$ we have $X \tensor{\1} E \wequi X \tensor{\1}_\CAlg E$.
In particular, when computing the ``usual'' Hochschild homology $S^1 \tensor{\1} E$, it does not matter if we view $E$ as an object of $\NAlg(\SH(\CC))$ or $\CAlg(\SH(\CC))$.
In what follows, however, we are interested in $\Gm \tensor{\1} E$, and when tensoring with $\Gm$, the distinction does matter (see Example \ref{ex:tensors-NAlg-CAlg}).
From now on, we will not speak of tensors in $\CAlg$ again.
\end{remark*}

Our main result identifies $\Gm \tensor{\1} \MF$ as a free $\scr E_1$-$\MF$-algebra on a class in bidegree $(2,1)$.

\begin{theorem*}
Let
\[
\mu_{2,1} \in \pi_{2,1}(\Gm \tensor{\1} \MF)
\]
denote the class constructed in Section~4. The induced map of $\scr E_1$-$\MF$-algebras
\[
\MF[\mu_{2,1}] \to \Gm \tensor{\1} \MF
\]
is an equivalence.
\end{theorem*}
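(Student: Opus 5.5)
The strategy is the one announced in the abstract. I compare the map $f\colon \MF[\mu_{2,1}] \to \Gm \tensor{\1} \MF$ after inverting $\tau$ and after reducing modulo $\tau$. Both sides are $\MF$-modules, hence $\MF$-cellular once we know $\Gm\tensor{\1}\MF$ is cellular. So it suffices to show that $f[\tau^{-1}]$ and $f/\tau$ are equivalences. Indeed, the cofiber $C$ of $f$ then satisfies $C[\tau^{-1}]=0$ and $C/\tau=0$. The second condition makes $\tau$ act invertibly on $C$, so $C \simeq C[\tau^{-1}] = 0$.

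Cellularity of $\Gm \tensor{\1} \MF$ should follow from writing $\Gm \tensor{\1}\MF$ as a colimit of free normed $\MF$-algebras on motivic spheres. One takes a cell structure of $\Gm$ as a pointed space, built from $S^{1,1}$, and uses that the tensor commutes with colimits in the space variable. Cellularity then reduces to cellularity of motivic extended powers $D^{\mathrm{mot}}_n(S^{p,q})\wedge\MF$, which is part of the analysis of motivic extended powers of spheres already carried out in the paper.

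For the $\tau$-inverted part, I use that $\MF[\tau^{-1}]$-modules over $\CC$ are equivalent to modules over $\H\FF_2[\tau^{\pm1}]$ via Betti realization. Betti realization is symmetric monoidal and cocontinuous, and it sends normed spectra to $\scr E_\infty$-rings and $\Gm$ to $S^1$. Hence
\[
\mathrm{Be}(\Gm\tensor{\1}\MF) \simeq S^1 \otimes \H\FF_2 = \THH(\H\FF_2).
\]
Under this identification $\mu_{2,1}$ maps to the Bökstedt class in degree $2$. Classical Bökstedt periodicity, $\THH(\H\FF_2)\simeq \H\FF_2[\mu_2]$ as $\scr E_1$-algebras, then gives that $f[\tau^{-1}]$ is an equivalence. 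Rigidity of $\tau$-inverted cellular modules lets one lift this equivalence back from Betti realization.

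The mod $\tau$ part is the main obstacle. The target $(\Gm\tensor{\1}\MF)/\tau$ is a normed algebra over $\MF/\tau$. I would invoke the paper's comparison between normed algebras and derived algebras on cellular $\MF/\tau$-modules. This identifies $(\Gm \tensor{\1}\MF)/\tau$ with the $\Gm$-tensor of $\MF/\tau$ computed in derived algebras, which carries divided power operations. Its homotopy would be computed with a bar-type (Künneth/Tor) spectral sequence built from the cell structure of $\Gm$, whose input is the homotopy of $\MF\wedge_{\MF}$ free derived algebras. One has to show that no $\tau$-torsion families appear, unlike the $S^1$ case, and that the answer is $\FF_2[\mu_{2,1}]$ polynomial in the appropriate bigrading. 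The powers of $\mu_{2,1}$ are produced by the normed or divided power operations. Here the mixed Cartan and Adem relations control the multiplicative extensions, showing $\mu^{2^k}$ is nonzero, in the same way Dyer–Lashof operations produce the polynomial generators in Bökstedt's argument. The delicate point is exactly this extension problem, namely checking that the squaring-type operation on $\mu_{2,1}$ is nonzero mod $\tau$. Once $\pi_{**}$ of both sides mod $\tau$ agree and $f/\tau$ is surjective on generators, a dimension count in each bidegree shows $f/\tau$ is an equivalence, which completes the proof.
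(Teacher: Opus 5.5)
Your overall frame is the paper's: check the map after inverting $\tau$ and after killing $\tau$, and reduce the $\tau$-inverted case to $\THH(H\FF_2)$ and Bökstedt periodicity. Your cofiber argument for that reduction is fine and does not need cellularity.

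In the $\tau$-inverted step, ``Betti realization sends $\Gm$ to $S^1$, hence $\mathrm{Be}(\Gm\tensor{\1}\MF)\simeq S^1\otimes H\FF_2$'' silently assumes that Betti realization commutes with the \emph{normed} $\Gm$-tensor, i.e.\ with the functors $f_\sharp$ on normed algebras. The paper avoids this by passing to (pro-)étale spectra, where normed and $\scr E_\infty$-algebras agree, and comparing $\Gm$ with $S^1$ through $\lim_n\Gm/2^n$. That point is repairable.

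The mod-$\tau$ half has a genuine gap: it mostly restates the goal, and its one concrete reduction is false. The comparison $\NAlg\to\DAlg$ does not turn the normed $\Gm$-tensor into any ``$\Gm$-tensor computed in derived algebras''. The $\Gm$-suspension $d$ intertwines Voevodsky and topological operations, $dQ_2x=Q_0dx$, $dQ_1x=Q_{-1}dx$, $dQ_ix=Q_{i-2}dx$ (Lemma~\ref{prop:Gm-susp}). The Voevodsky operations $Q_1,Q_2$ are not part of the derived-algebra structure, where all $Q_i$ with $i>0$ vanish. Seen only as a derived algebra, $\pi_{**}(\MF\otimes\MFtau)=\FF_2[\xi_1,\xi_2,\dots]\otimes\Lambda(\tau_0,\tau_1,\dots)$ looks free on all the $\tau_i$ and $\xi_i$. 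Any derived-algebra tensor would therefore produce independent classes $d\tau_i$, $d\xi_i$ and operations on them, far more than $\FF_2[\mu_{2,1}]$.

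There is also no bar/Tor spectral sequence ``from the cell structure of $\Gm$''. Unlike $S^1=*\amalg_{S^0}*$, the pointed space $\Gm=S^{1,1}$ is itself the cell and is not a colimit of points, so no Bökstedt-type spectral sequence is available.

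What is missing is the paper's actual mechanism:
\begin{itemize}
\item Check the map after $-\otimes\MFtau$. Base change along the normed map $\1\to\MFtau$ turns the target into $\Gm\tensor{\MFtau}(\MF\otimes\MFtau)$.
\item Resolve the \emph{algebra}, not $\Gm$, by free normed $\MFtau$-algebras, for which $\Gm\tensor{\MFtau}\NSym(X)\simeq\NSym(X)\otimes\NSym(\Sigma^{1,1}X)$.
\item Identify the $E^2$-page with derived functors of an algebraic $\Gm$-tensor $G$ on the category $\NA$, which remembers only $Q_i$ for $i\le 2$ and the $\delta_j$.
\end{itemize}
The key input is Lemma~\ref{lemm:steenrod-free}: $\pi_{**}(\MF\otimes\MFtau)$ is \emph{free} in $\NA$ on $\tau_0$, with $\tau_n=Q_2^n\tau_0$ and $\xi_{n+1}=Q_1Q_2^n\tau_0$ up to conjugation. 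Consequently there are no higher derived functors. The $E^2$-page is $G(F(\tau_0))=F(\tau_0)\otimes\FF_2[d\tau_0]$, concentrated in filtration zero, so the spectral sequence collapses and there is no extension problem at all.

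The powers of $\mu_{2,1}=d\tau_0$ do not come from divided powers or mixed relations. They come from $d\tau_n=Q_0^n(d\tau_0)=\mu_{2,1}^{2^n}$, because $Q_0$ is squaring in Chow degree $0$. Likewise $d\xi_{n+1}=Q_{-1}(\mu_{2,1}^{2^n})=0$. The real difficulty is showing that nothing else survives, and your sketch never addresses it.
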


Here $\MF[\mu_{2,1}]$ denotes the free $\scr E_1$-$\MF$-algebra on a generator in bidegree $(2,1)$. Thus the theorem may be viewed as a motivic form of Bökstedt periodicity for the $\Gm$-tensor. In particular, as an $\MF$-module,
\[
\MF[\mu_{2,1}] \wequi \bigoplus_{n \ge 0} \Sigma^{2n,n}\MF.
\]
More generally, the main theorem remains valid over any algebraically closed field of characteristic different from $2$.

Recall that in this case, $\pi_{**}\MF = \FF_2[\tau]$, so the problem naturally can be attacked by first inverting $\tau$ and then working modulo $\tau$.
Inverting $\tau$ essentially just reduces our problem to topology.
The essential new difficulties arise when working modulo $\tau$.
It turns out that $\MFtau = \MF/\tau$ is a highly structured motivic ring spectrum.
The key point of calculation is governed by the interaction of several kinds of operations which arise on $\MFtau$-algebras. For objects
\[
E \in \NAlg(\SH(\CC))_{\MFtau/},
\]
we consider Voevodsky operations $Q^i_V$, topological operations $Q^i_t$, the combined operations $Q^i$, and certain divided power operations $\delta^j$, arising from $\tau$-torsion phenomena of the Voevodsky operations applied to classes in positive Chow degree.
These operations are related by Cartan, Adem, and mixed relations.

A central observation of the paper is that the normed algebra monad on cellular $\MFtau$-modules admits a comparison with the derived algebra monad, \ie every normed $\MFtau$-algebra has an underlying derived algebra in the sense of \cite{raksit2020hochschild}.
Our operations $\delta^j$, initially discovered as consequence of certain systematic $\tau$-torsion phenomena, under this comparison exactly correspond to the higher divided power operations on the homotopy groups of derived algebras.

More precisely, we prove that the homotopy groups of free normed $\MFtau$-algebras are generated by the operations $Q^i$ and $\delta^j$, subject only to the relations established in Section~3. The mixed relations are the part of the calculation most sensitive to the motivic grading. They are detected by a detailed analysis of the motives of motivic extended squares of spheres.

The proof of the main theorem proceeds by comparing $\Gm \tensor{\1} \MF$ with its $\tau$-inverted and $\tau$-reduced forms. After inverting $\tau$, the calculation becomes essentially topological: using Betti realization and the equivalence
\[
\SH(\CC)_2^\comp[\tau^{-1}] \wequi \Sp_2^\comp,
\]
one identifies $\Gm \tensor{\1} \MF[\tau^{-1}]$ with the usual $\THH(H\FF_2)$.

The reduction modulo $\tau$ is more subtle. Scalar extension reduces the problem to a computation of
\[
\Gm \tensor{\MFtau}(\MF \otimes \MFtau).
\]
This is carried out by resolving normed algebras by free ones and comparing the resulting spectral sequence with an algebraic model built from a category $\NA$. The collapse of this spectral sequence is ultimately responsible for the absence of the large $\tau$-torsion families which occur in the simplicial-circle calculation.
The category $\NA$ is designed to record only some of the structure present on the homotopy groups of normed $\MFtau$-algebras, specifically, we only remember the action of the power operations $Q^i$ for $i \le 2$.
Viewed through this lens, the motivic dual Steenrod algebra becomes a free object, explaining the collapse of the spectral sequence (the $E^2$-page of which consists of a certain derived functor applied to the motivic dual Steenrod algebra, viewed as an object of $\NA$).

\begin{remark*}
A similar strategy can be applied to give another proof of Bökstedt periodicity for $\THH(H\FF_2)$ itself.
Of course, the fact that restricting to only a subset of the power operations simplifies the computation comes as no surprise in the topological situation, given the Hopkins--Mahowald theorem.
The possibility of exploiting this purely algebraic formulation of the Hopkins--Mahowald theorem was indeed our inspiration to attempt the strategy in the motivic setting in the first place.
\end{remark*}

\subsection{Organization}
We conclude by describing the organization of the paper. Section~2 recalls motivic categories, tensoring by motivic spaces, and the comparison between normed and derived algebra structures. Section~3 develops the necessary power operations and proves the Cartan, Adem, and mixed relations. Section~4 applies these results to compute $\Gm \tensor{\1} \MF$ and prove the main theorem.

\subsection{Notation and conventions}
We will make use of the following notation throughout this article.
\begin{itemize}
\item We denote by $\MF$ the mod $2$ motivic cohomology spectrum and by $\MFtau = \MF/\tau$ its reduction modulo $\tau$.
\item Everywhere except in \S\ref{subsec:mot-cat} we will be working over an algebraically closed base field $\bar k$ of characteristic $\ne 2$.
\item Given a monad $M$ on a category $\scr C$ with category of $M$-algebras $\scr A$, we also write $M: \scr C \to \scr A$ for the free algebra functor.
\item We write $c(p,q) = p-2q$ for the \emph{Chow degree} of $(p,q) \in \Z^2$.
\item We denote by $D_n$ (and specifically $D_2$) the \emph{motivic} extended powers of \cite{arXiv:2104.01057}.
\end{itemize}

We will also employ what we consider standard notation for motivic homotopy theory and higher category theory, among it the following.
\begin{itemize}
\item $\Sm_S$ denotes the category of smooth $S$-schemes.
\item $\Spc(S)$ is the $\infty$-category of motivic spaces over $S$.
\item $\SH(S)$ denotes the $\infty$-category of motivic spectra over $S$.
\item $S^{p,q} = S^{p-q} \wedge \Gmp{q}$ and $S^p = S^{p,0}$ denote the motivic (and ordinary) spheres, in whichever category is relevant in the situation.
\item $\NAlg(\SH(S))$ denotes the $\infty$-category of normed motivic spectra as defined in \cite[\S7]{norms}.
  We also employ analogous notation for normed objects in other normed categories.
\item We write $\NSym$ for the free normed spectrum functor, usually mainly considered over $\MFtau$.
  (That is, most of the time, by $\NSym$ we mean the free normed $\MFtau$-algebra.
  It should be clear from context when we mean something more general.)
\item We write $\oplus$ for the direct sum and $\otimes$ for the tensor product of motivic spectra.
  The base ring for the tensor product is usually clear from context, and otherwise an unadorned $\otimes$ means that the base is the sphere spectrum.
\item We write $\PSh(\scr C)$ and $\PSh_\Sigma(\scr C)$ for the categories of presheaves and finite-product-preserving presheaves on $\scr C$.
\item We denote by $\Sp$ the $\infty$-category of spectra.
\item Given a presentable stable $\infty$-category $\scr C$, we denote by $\scr C_p^\comp \subset \scr C$ its $p$-completion, and we write $E \mapsto E_p^\comp$ for the $p$-completion functor (left adjoint to the inclusion).
\end{itemize}

\section{Preliminaries}
\subsection{Motivic categories} \label{subsec:mot-cat}
We discuss some fragments of the theory of motivic categories.
This has been used implicitly or explicitly in many places before, \eg \cite{arXiv:2104.01057}.
Fix a base scheme $S$.

\begin{definition} \label{def:motivic-cat}
By a \emph{motivic category over $S$} we mean a presheaf of categories $\scr C: \Sm_S^\op \to \Cat_\infty$, such that
\begin{enumerate}
\item $\scr C$ is a sheaf for the Nisnevich topology, and
\item for each $X \in \scr S$, the functor $\scr C(X) \to \scr C(\A^1 \times X)$ is fully faithful.
\end{enumerate}

We call $\scr C$ \emph{motivically cocomplete} if in addition
\begin{enumerate}
  \setcounter{enumi}{2}
\item for each $X \in \Sm_S$, $\scr C(X)$ is cocomplete,
\item for each smooth morphism $f: X \to Y \in \Sm_S$ the functor $f^*: \scr C(Y) \to \scr C(X)$ admits a left adjoint $f_\sharp$, and
\item given a cartesian square in $\Sm_S$
\begin{equation*}
\begin{CD}
X' @>p'>> Y' \\
@V{f'}VV @V{f}VV \\
X @>p>> Y,
\end{CD}
\end{equation*}
  with both $p$ and $f$ smooth, the canonical exchange transformation (see, \eg \cite[1.1.6]{triangulated-mixed-motives}) $f'_\sharp p'^* \to p^*f_\sharp$ is an equivalence.
\end{enumerate}
\end{definition}

\begin{remark} \label{rmk:5-dual}
It may appear more natural in (5) to only require $f$ to be smooth.
The current formulation has the advantage that if $\scr C$ satisfies (5), then so does $\scr C^\op$ (provided the smooth pullback functors admit right adjoints).
Note that the same is true for conditions (1) and (2), \ie if $\scr C$ is a motivic category then so is $\scr C^\op$.
(Our propensity of passing to opposite categories is also why we are not studying motivically \emph{presentable} categories here.)
\end{remark}

\begin{example} \label{ex:mot-cat}
The assignment $\scr C(X) = \SH(X)$ is the prototypical example of a motivically cocomplete motivic category.
By Remark \ref{rmk:5-dual}, so is $\scr C(X) = \SH(X)^\op$.
Another choice is $\scr C(X) = \NAlg(\SH(X))^\op$; for (4) and (5) see \cite[Theorem 8.2]{norms}.
Thus again by Remark \ref{rmk:5-dual}, we may also consider $\scr C(X) = \NAlg(\SH(X))$ itself.
More generally, given $A \in \NAlg(\SH(S))$, both of the assignments \[ X \mapsto \Mod_A(\SH(X)) \text{ and } X \mapsto \NAlg(\Mod_A(\SH(X))) \] are motivically cocomplete motivic categories, and so are their opposites.
\end{example}

\subsubsection{Enrichment}
Now let $\scr C$ be a motivic category over $S$.
As explained in \eg \cite[\S3]{bachmann-linearity}, this supplies $\scr C(S)$ with the structure of an $\infty$-category enriched in $\PSh(\Sm_S)$ in the sense of \cite{gepner2015enriched}.
In particular, for each $X, Y \in \scr C(S)$ we obtain $\ul\Map(X,Y) \in \PSh(\Sm_S)$, given by the formula 
\[ \ul\Map(X,Y)(T) = \Map_{\scr C(T)}(X_T,Y_T). \]
Here we use the functor $s_T^*: \scr C(S) \to \scr C(T)$ corresponding to the unique map $s_T: T \to S \in \Sm_S$, and write $X_T$  for $s_T^*(X)$.
In fact, this is an enrichment in motivic spaces, as the following lemma shows.
\begin{lemma}
Let $\scr C$ be a motivic category and $X,Y \in \scr C(S)$.
Then $\ul\Map(X,Y) \in \PSh(\Sm_S)$ is a motivic space, \ie is a Nisnevich sheaf on $\Sm_S$ which is $\A^1$-invariant.
\end{lemma}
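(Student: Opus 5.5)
The plan is to check the two defining properties of a motivic space separately, each using one axiom of Definition~\ref{def:motivic-cat}. First we make the presheaf structure explicit. For a morphism $g\colon T' \to T$ in $\Sm_S$ we have $s_{T'} = s_T \circ g$, hence $X_{T'} \simeq g^* X_T$. The restriction map $\ul\Map(X,Y)(T) \to \ul\Map(X,Y)(T')$ is the composite
\[ \Map_{\scr C(T)}(X_T,Y_T) \xrightarrow{g^*} \Map_{\scr C(T')}(g^*X_T, g^*Y_T) \simeq \Map_{\scr C(T')}(X_{T'},Y_{T'}). \]
Equivalently, $\ul\Map(X,Y)$ is the composite of the presheaf of categories $\scr C$ with the functor sending a category to the mapping space between the restricted objects. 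Formally this is the mapping-space construction for the cocartesian fibration classifying $\scr C$, restricted along the section determined by $X$ and $Y$; this is how \cite[\S3]{bachmann-linearity} sets up the enrichment, and I will take that identification as given.

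\emph{Nisnevich descent.} Let $\{U_i \to T\}$ be a Nisnevich cover with Čech nerve $U_\bullet$. Axiom (1) says $\scr C(T) \simeq \lim_{\Delta} \scr C(U_\bullet)$. Mapping spaces in a limit of $\infty$-categories are computed as limits of mapping spaces: for objects $A,B$ of $\lim_j \scr C_j$,
\[ \Map(A,B) \simeq \lim_j \Map_{\scr C_j}(A_j,B_j). \]
Applying this with $A = X_T$ and $B = Y_T$, whose images are $X_{U_n}$ and $Y_{U_n}$, gives $\ul\Map(X,Y)(T) \simeq \lim_\Delta \ul\Map(X,Y)(U_\bullet)$. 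This is the sheaf condition. If one prefers the excision form, the same argument applies to elementary distinguished squares together with $\scr C(\emptyset) \simeq *$. The latter follows from the sheaf condition for the empty cover and gives $\ul\Map(X,Y)(\emptyset) \simeq *$.

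\emph{$\A^1$-invariance.} Let $p\colon \A^1 \times T \to T$ be the projection. Then $X_{\A^1\times T} \simeq p^* X_T$ and likewise for $Y$, so the map $\ul\Map(X,Y)(T) \to \ul\Map(X,Y)(\A^1\times T)$ is $p^*$ acting on $\Map_{\scr C(T)}(X_T,Y_T)$. Axiom (2) says exactly that $p^*$ is fully faithful, so this map is an equivalence.

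The main obstacle is not conceptual; it is making the first paragraph rigorous. One must check that the enriched mapping object really has the naive restriction maps, with the coherences coming from the cocartesian fibration, so that the limit formula for mapping spaces applies to the diagram of categories $\scr C(U_\bullet)$. I would handle this by straightening: write $\scr C \to \Sm_S^\op$ as a cocartesian fibration. The sections $T \mapsto X_T$ and $T \mapsto Y_T$ are cocartesian sections. The presheaf $\ul\Map(X,Y)$ is then the relative mapping space of these two sections, and the limit formula is the standard computation of mapping spaces in $\infty$-categories of sections (cf.\ \cite{gepner2015enriched}).
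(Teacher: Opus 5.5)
Your proposal is correct and is essentially the paper's argument. The paper's proof simply says the lemma "translates directly into conditions (1) and (2)". Your descent step (mapping spaces in a limit of $\infty$-categories are limits of mapping spaces) and your $\A^1$-invariance step (full faithfulness of $p^*$) are exactly that translation, written out in detail.
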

\begin{proof}
This translates directly into conditions (1) and (2) of the definition of a motivic category.
\end{proof}

\subsubsection{Tensors} \label{subsub:tensors}
In any enriched category, tensors and cotensors by the enriching category are uniquely defined, if they exist.
For example, in a motivic category $\scr C$, given objects $X, Y \in \scr C(S)$ and a motivic space $A \in \Spc(S)$, the tensor $A \otimes X$ (if it exists) is characterized by\NB{One could leave out the outermost underlines and still obtain a uniquely characterized object. Doing so, one could drop condition (5) from the definition of motivically cocomplete. Current version feels safer.} \[ \ul\Map(A \otimes X, Y) \wequi \ul\Map_{\Spc(S)}(A, \ul\Map(X,Y)). \]
\begin{lemma} \label{lemma:tensorsinmotiviccategories}
Let $\scr C$ be a motivically complete motivic category over $S$.
Then all tensors by motivic spaces exist in $\scr C(S)$.
In fact, for $A \in \Spc(S)$ and $X \in \scr C(S)$ we have \[ A \otimes X \wequi \colim_{T \to A} s_{T\sharp}s_T^* X; \]
here the colimit is over the $\infty$-category of pairs of a smooth scheme $T$ together with a section of $A$ over $T$.
\end{lemma}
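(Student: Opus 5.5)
We define the candidate $B := \colim_{(T,a) \in \Sm_S/A} s_{T\sharp}s_T^* X$ and verify the defining universal property
\[ \ul\Map(B, Y) \wequi \ul\Map_{\Spc(S)}(A, \ul\Map(X,Y)) \]
naturally in $Y$. The colimit exists because $\scr C(S)$ is cocomplete by condition (3), and $s_{T\sharp}$ exists by condition (4) since $T \to S$ is smooth. The argument has three steps: the representable case, the case of a general motivic space, and the upgrade from mapping spaces to mapping presheaves.

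First, take $A = T$ representable. The $\infty$-category $\Sm_S/T$ of pairs has a terminal object $(T,\mathrm{id})$, so the colimit is $s_{T\sharp}s_T^*X$. For $Y \in \scr C(S)$ we compute
\[
\Map_{\scr C(S)}(s_{T\sharp}s_T^*X, Y) \wequi \Map_{\scr C(T)}(X_T, Y_T) = \ul\Map(X,Y)(T) \wequi \Map_{\PSh(\Sm_S)}(T, \ul\Map(X,Y)),
\]
using the adjunction $s_{T\sharp} \dashv s_T^*$ and then the Yoneda lemma. By the earlier lemma, $\ul\Map(X,Y)$ is a motivic space, so this last mapping space equals $\Map_{\Spc(S)}(L_{mot}T, \ul\Map(X,Y))$.

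Second, let $A$ be a general motivic space. In $\PSh(\Sm_S)$, $A$ is the colimit of representables over $\Sm_S/A$. Since $\ul\Map(X,Y)$ is motivically local, we get $\Map(A, \ul\Map(X,Y)) \wequi \lim_{(T,a)} \Map(T, \ul\Map(X,Y))$. On the other side, $\Map(B,Y) \wequi \lim_{(T,a)} \Map(s_{T\sharp}s_T^*X, Y)$. So it suffices to match the two diagrams indexed by $\Sm_S/A$. The main technical point is to check that the equivalence from the first step is natural in maps $T' \to T$ over $S$. This holds because it comes from the unit/counit of $s_\sharp \dashv s^*$ together with the identification $s_{T'}^* \wequi g^* s_T^*$. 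To handle this coherently, I would work with the functor $T \mapsto s_{T\sharp}s_T^*X$, obtained as the left Kan extension along Yoneda. The motivic-space-valued corepresentability then makes the naturality formal.

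Third, we pass from mapping spaces to $\ul\Map$. For a smooth $U \to S$, we need $\ul\Map(B,Y)(U) = \Map_{\scr C(U)}(B_U, Y_U)$. Here $s_U^*$ preserves colimits, since it is a left adjoint when the right adjoint exists; otherwise one argues directly. Condition (5), applied to the cartesian square $T\times_S U \to U$, $T\to S$, gives $s_U^* s_{T\sharp}s_T^*X \wequi s_{T_U\sharp}s_{T_U}^*X_U$. Hence $B_U$ is the analogous colimit for $A_U$ over $U$, indexed via the base change $\Sm_S/A \to \Sm_U/A_U$, which is cofinal. Repeating the second step over $U$ gives the equivalence on $U$-sections, naturally in $U$.

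I expect the main obstacle to be this third step. Commuting $s_U^*$ past the colimit needs either that $s_U^*$ preserves colimits or a direct argument, and we must also justify the cofinality used to compare the indexing categories over $S$ and over $U$.
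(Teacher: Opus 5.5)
Your proposal is correct and follows essentially the paper's argument. Both do the representable case via the adjunction $s_{T\sharp}\dashv s_T^*$ and Yoneda, use condition (5) for the base change that the enriched $\ul\Map$ requires, and then extend to arbitrary $A$ as a colimit of representables. The only difference is the order. The paper checks the enriched property on representables first and then invokes closure of tensors under colimits. You do the colimit step on global sections and upgrade to $\ul\Map$ afterwards via (5) and the cofinality of $\Sm_S/A \to \Sm_U/A_U$; that cofinality holds, since this functor is right adjoint to the forgetful functor.

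The point you flag, that $s_U^*$ must commute with the colimit, is used tacitly in the paper's closure-under-colimits step too. It is not among the axioms of Definition~\ref{def:motivic-cat}, so instead of ``arguing directly'' you should note that it holds in every case of Example~\ref{ex:mot-cat}, where $s_U^*$ preserves the relevant colimits.
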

\begin{proof}
For fixed $X \in \scr C(S)$, the class of objects $A \in \Spc(S)$ such that $A \otimes X$ exists is closed under colimits.
For $A = T \in \Sm_S$, comparing the universal properties shows that $T \otimes X$ exists and is given by $s_{T\sharp}s_T^* X$ (here we use condition (5) of Definition \ref{def:motivic-cat}).
It follows that all tensors exists in this setting and are given by the claimed formula.
\end{proof}

\begin{example} \label{ex:linear-tensor}
Let $\scr C(\ph) = \SH(\ph)$, $E \in \SH(S)$ and $X \in \Spc(S)$.
Then $X \otimes E \wequi \Sigma^\infty_+ X \otimes E$, where on the left hand side $\otimes$ means the external tensoring by motivic spaces, and on the right hand side $\otimes$ means the usual tensor product of motivic spectra.
This is immediate from the formula in Lemma \ref{lemma:tensorsinmotiviccategories}.
\end{example}

\begin{remark}
Note that cotensors in $\scr C(S)$ are the same as tensors in $\scr C(S)^\op$.
In particular, if $\scr C(\ph)^\op$ is motivically cocomplete, then cotensors in $\scr C(S)$ exist and are given by the formula \[ X^A \wequi \lim_{T \to A} s_{T*}s_T^* X. \]
\end{remark}

\subsubsection{Further properties} \label{subsub:further-prop-tensors}
We conclude this section by listing the following properties of the above constructions, which are essentially immediate from the definitions.
\begin{itemize}
\item Any morphism of motivic categories \[ F: \scr C \to \scr D \in \Fun(\Sm_S^\op, \Cat_\infty) \] defines an enriched functor $\scr C(S) \to \scr D(S)$.
\item If $F$ preserves colimits and commutes with functors of the form $f_\sharp$, then $F$ commutes with tensoring by motivic spaces.
\item Suppose now instead that $F$ sectionwise admits a right adjoint $G$.
  Then $G$ commutes with smooth pullbacks, if and only if $F$ commutes with functors of the form $f_\sharp$.
  In this situation, $G$ upgrades to an enriched functor and the adjunction upgrades to an enriched adjunction.
  Moreover then $F$ commutes with tensors and $G$ with cotensors by motivic spaces.
\end{itemize}

\begin{example} \label{ex:nsym-enriched}
Since pullback of normed algebras commutes with the forgetful functor, the adjunction \[ \NSym: \SH(S) \adj \NAlg(\SH(S)): U \] upgrades to an enriched adjunction compatible with tensors ($\NSym$) and cotensors ($U$).
Consequently for $A \in \Spc(S)$ and $X \in \SH(S)$ we have \[ A \tensor{\1} \NSym(X) \wequi \NSym(\Sigma^\infty_+ A \otimes X), \] where on the notation on the left hand side means the tensor in $\NAlg(\SH(X))$, and the right hand side means ($\NSym$ applied to) the tensor in $\SH(S)$ (via Example \ref{ex:linear-tensor}).
\end{example}

\subsection{Modules over $\MFtau$}
From now on, and for the rest of the article, we work over an algebraically closed field $\bar k$ of characteristic $\ne 2$.

We put $\Mod_{\MFtau} := \Mod_{\MFtau}(\SH(\bar k))$ and write $\Mod_{\MFtau}^\cell \subset \Mod_{\MFtau}$ for the full subcategory on cellular objects, \ie the subcategory generated under colimits and desuspensions by $\Sigma^{2*,*}\MFtau$.
Recall that $\map_{\MFtau}(\Sigma^{2m,m}\MFtau,\Sigma^{2n,n}\MFtau) = 0$ for $m \ne n$ and $=H\FF_2$ else.
(This follows from the fact that, since $\bar k$ is algebraically closed of characteristic $\ne 2$, we have $\pi_{**} \MF = \FF_2[\tau]$ (see, \eg \cite[(7.1)]{kylling2018hermitian}).)
In particular the full subcategory of $\Mod_{\MFtau}$ on finite sums of the Tate motives $\Sigma^{2*,*}\MFtau$ is a $1$-category which is symmetric monoidally equivalent to the category $\Vect_{\FF_2}^\Z$ of evenly graded $\FF_2$-vector spaces.
Passing to animations and stabilizing, we obtain a symmetric monoidal equivalence \[ \Mod_{H\FF_2}^\Z \wequi \Mod_{\MFtau}^\cell. \]
It sends the generator $H\FF_2$ in degree $i$ to $\Sigma^{2i,i} \MFtau$.
The source carries an evident $t$-structure, which translates into a $t$-structure on the target; this is the \emph{Chow $t$-structure}.
We denote the connective objects by $(\Mod_{\MFtau}^\cell)_{c\ge 0}$ and similarly for the coconnective ones.

It is an important fact, which we do not prove here, that $\MFtau$ admits a normed structure.
\begin{theorem}[\cite{bachmann-burklund-tau}] \label{thm:MFtau-normed}
The object $\MFtau=\MF/\tau$ lifts to $\NAlg(\SH(\bar k))$, and in fact so does ${\M\Z}_2^\comp/\tau$.
\end{theorem}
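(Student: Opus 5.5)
We propose to build $\MFtau$, and more generally ${\M\Z}_2^\comp/\tau$, as a normed spectrum by realizing it as a truncation of a normed spectrum we already have, $\MF$ (respectively ${\M\Z}_2^\comp$). The starting point is that $\MF$ and $\M\Z$ are normed spectra by \cite{norms}, and $\tau \in \pi_{0,-1}\MF$.

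The plan has four steps.

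\textbf{Step 1: the naive approach and why it fails.} One would like to form the quotient $\MF/\tau$ as a pushout of normed algebras along $\NSym(\Sigma^{0,-1}\1) \to \MF$, killing $\tau$. But a free normed algebra on one generator is far from polynomial, so this pushout is not $\MF/\tau$. We will instead realize $\MF/\tau$ as a truncation.

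\textbf{Step 2: identify $\MF/\tau$ as a truncation.} After $2$-completion over $\bar k$, there is a cofiber sequence $\Sigma^{0,-1}\MF \xrightarrow{\tau} \MF \to \MF/\tau$. We will show that $\MF/\tau$ is the truncation $\tau_{c\le 0}$ of a suitable normed spectrum with respect to a $t$-structure detected by Chow degree. Concretely, we take ${\1}_2^\comp$, or directly $\MF$, with the \emph{Chow $t$-structure} on $\SH(\bar k)$ in the sense of Bachmann--Kong--Wang--Xu. In that $t$-structure the heart of $\1_2^\comp$ recovers $\MF/\tau$ up to known identifications; more precisely, $\M\Z/\tau$ is identified with the Chow-heart truncation $\pi^{c}_0$ of $\M\Z$, or rather of $\1^\comp_2$ composed with the cofiber of $\tau$.

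\textbf{Step 3: check the norms preserve Chow-connectivity.} We will verify that the norm functors $f_\otimes$ for finite étale $f$ send Chow-connective objects to Chow-connective objects. This can be checked on generators $\Sigma^{2n,n}\Sigma^\infty_+X$ with $X$ smooth projective (or $X \in \Sm$ suitably), since norms of such generators are again Thom spectra over Weil restrictions, which are Chow-connective. Given that, the general machinery ("normed connective covers and truncations") shows that $\tau_{c\le 0}$ of a Chow-connective normed spectrum is again normed.

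\textbf{Step 4: identify the truncation.} Finally we will identify $\tau_{c \le 0}$ of the $2$-completed sphere (or of ${\M\Z}_2^\comp$) with ${\M\Z}_2^\comp/\tau$, using the computation $\pi_{**}\M\Z_2^\comp = \Z_2[\tau]$ over $\bar k$. Reducing mod $2$, normed as a quotient of the normed ring ${\M\Z}_2^\comp/\tau$ by $2$ within its heart, then gives $\MFtau$; alternatively one can apply the same truncation argument to $\MF$ directly.

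The main obstacle is Step 3: the compatibility of the Chow $t$-structure with norms along finite étale maps. This fails in general and needs the base to be algebraically closed, or at least a careful argument. I would attempt it by reducing to generators and using that Weil restriction of a smooth projective variety along finite étale maps remains smooth projective.
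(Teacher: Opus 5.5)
The paper does not prove this statement; it imports it from the forthcoming \cite{bachmann-burklund-tau}, so there is no argument to compare against. Judged on its own, your proposal has a genuine gap at Step 3.

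\textbf{The gap: truncation is not compatible with smooth pullback.} A normed spectrum over $\bar k$ in the sense of \cite[\S7]{norms} is a family $X \mapsto E_X = p_X^*E$ over all $X \in \Sm_{\bar k}$, together with norm maps $f_\otimes E_Y \to E_X$ for finite étale $f\colon Y \to X$. Over $\mathrm{Spec}\,\bar k$ itself these norms are just tensor powers. They therefore only recover the $\scr E_\infty$-structure on $\tau_{c\le 0}\MF$, which Bachmann--Kong--Wang--Xu's $\otimes$-compatibility of the Chow $t$-structure already gives. All of the genuinely normed data (Voevodsky operations, the $\Gm$-tensor of this paper) lives over positive-dimensional smooth bases.

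To induce norm maps on $p_X^*(\tau_{c\le 0}E)$ from those of $E$ via the universal property of truncation, you need two things. First, norms must preserve Chow-connectivity; this is what you address. Second, $p_X^*(\tau_{c\le 0}E)$ must be Chow-coconnective over $X$. That is, the truncations must commute with smooth pullback, or equivalently $f_\sharp$ must be right $t$-exact for smooth $f$.

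This second condition fails for the relative Chow $t$-structure generated by Thom spectra of smooth proper $X$-schemes, which is the one your Weil-restriction argument implicitly uses. The reason is that smooth does not imply proper. Concretely, take $p\colon \Gm \to \mathrm{Spec}\,\bar k$. The object $\Sigma^{-1,-1}\1_{\Gm}$ has Chow degree $1$ over $\Gm$, yet $[\Sigma^{-1,-1}\1_{\Gm}, p^*\MFtau] \cong [\Sigma^{-1,-1}\Sigma^\infty_+\Gm, \MFtau] \cong \pi_{-1,-1}\MFtau \oplus \pi_{0,0}\MFtau \neq 0$. The nonzero class is the coordinate $[u] \in H^{1,1}(\Gm;\FF_2)$. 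So the relative truncation of $p^*\MF$ kills $[u]$, while $p^*\MFtau$ retains it. The truncations therefore do not assemble into a normed spectrum.

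Forcing compatibility does not help either. If you close the connective part under $\Sigma^\infty_+ U \otimes (-)$ for all smooth $U$, then $\1$ becomes a retract of $\Sigma^\infty_+\Gm \otimes S^{-1,-1}$, so the resulting localization kills the unit.

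This also shows that your remark that Step 3 "needs the base to be algebraically closed" misplaces the difficulty. Algebraic closedness trivializes the norms over the base point. The problematic norms are those over schemes like $\Gm$, which is precisely the direction that separates $\NAlg$ from $\CAlg$ in Example \ref{ex:tensors-NAlg-CAlg}. Some genuinely new input is needed here, which is presumably why the authors defer the result to a separate manuscript.

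\textbf{Secondary errors in Steps 2 and 4.} Bachmann--Kong--Wang--Xu identify $\pi_0^c(\1_2^\comp)$ with $\1_2^\comp/\tau$. That object is neither $\MFtau$ nor $\M\Z_2^\comp/\tau$, so truncating the sphere cannot produce your target; only truncating $\MF$ or $\M\Z_2^\comp$ could. In addition, "reducing mod $2$ inside the heart" is again the quotient of a normed ring by an element. That is exactly the operation you said in Step 1 does not preserve normed structures, so it needs its own argument, for instance that the ideal $(2)$ is closed under the norms of the heart. You can sidestep this by truncating $\MF$ directly, but that route still runs into the main gap above.
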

In particular, $\Mod_{\MFtau}$ upgrades to a normed category, so that \[ \NAlg(\Mod_{\MFtau}) \wequi \NAlg(\SH(\bar k))_{\MFtau/} \] makes sense.

\subsubsection{Recollections on derived algebras} \label{subsec:derived-alg-recollections}
Recall that a \emph{filtered monad} (on $\Mod_{\MFtau}^\cell$) is \cite[Definition 4.1.2]{raksit2020hochschild} a lax monoidal functor \[ \Z_{\ge 0}^\times \to \End(\Mod_{\MFtau}^\cell). \]
We denote the category of such filtered monads by $FilMon(\Mod_{\MFtau}^\cell)$.
Under mild assumptions (which are satisfied in the situations we are interested in) \cite[Proposition 4.1.4]{raksit2020hochschild}, given a filtered monad $F_*$, the colimiting endofunctor $\colim_n F_n$ itself carries the structure of a monad.
To construct the monad $\LSym$ encoding derived algebras, Raksit first lifts the usual symmetric monad $\Sym$ (encoding $\scr E_\infty$-algebras) to a filtered monad $\Sym_*$, and then modifies $\Sym_*$ to obtain a filtered monad $\LSym_*$, the colimit of which by definition is $\LSym$.

\begin{remark} \label{rmk:LSym-meaning}
The construction, while somewhat complicated, seeks to implement the following idea.
Given a symmetric monoidal $\infty$-category $\scr C$ with a $t$-structure, we want to take the monad $\Sym$, defined on the $1$-category $\scr C^\heart$.
The monad $\LSym$ should be obtained from this by extending to all of $\scr C$ in some canonical way.
Specifically, we ask that the extension should preserves sifted colimits (this determines it on $\scr C_{\ge 0}$), and be \emph{polynomial} in an appropriate sense (this determines it then on $\scr C_{\le 0}$).
\end{remark}

This construction can be carried out in quite general stable symmetric monoidal $\infty$-categories with an appropriate $t$-structure.
Thus, out of the Chow $t$-structure on $\Mod_{\MFtau}^\cell$, Raksit constructs a morphism of filtered monads \[ \Sym_* \to \LSym_*. \]
Taking colimits, one obtains ordinary monads $\Sym$ and $\LSym$ on $\Mod_{\MFtau}^\cell$.
Here $\Sym$ is the usual symmetric monad, with category of algebras given by $\CAlg(\Mod_{\MFtau}^\cell)$.
We write \[ \DAlg(\Mod_{\MFtau}^\cell) \] for the category of $\LSym$-algebras and call the objects \emph{derived algebras}.
Write \[ \DAlg(\Mod_{\MFtau}^\cell)_{c\ge 0} := \DAlg(\Mod_{\MFtau}^\cell) \times_{\Mod_{\MFtau}^\cell} (\Mod_{\MFtau}^\cell)_{c \ge 0} \] for the subcategory of connective derived algebras, and similarly for the coconnective ones.
Recall that
\begin{itemize}
\item the category $\DAlg(\Mod_{\MFtau}^\cell)_{c\ge 0}$ is equivalent to the category of animated commutative graded $\FF_2$-algebras, or equivalently, the underlying $\infty$-category of simplicial commutative $\FF_2$-algebras \cite[Remark 4.2.24]{raksit2020hochschild},
\item whereas the category $\DAlg(\Mod_{\MFtau}^\cell)_{c\le 0}$ is equivalent to the underlying $\infty$-category of cosimplicial commutative $\FF_2$-algebras, or equivalently, the opposite of the category of affine stacks \cite[Corollary 3.7]{mathew2025affine-stacks}.
\end{itemize}

Now let $E \in \DAlg(\Mod_{\MFtau}^\cell)$.
Then $\pi_{**}E$ carries power operations of two kinds:
\begin{itemize}
\item Using the forgetful functor $\DAlg(\Mod_{\MFtau}^\cell) \to \CAlg(\Mod_{\MFtau}^\cell) \wequi \CAlg(\Mod_{H\FF_2}^\Z)$, we obtain for $i \in \Z$ the $\scr E_\infty$-operations \cite[\S III]{MR836132} \[ Q^i: \pi_{*,*'}E \to \pi_{*+i,2*'}E. \]
\item Using the forgetful functor from $\DAlg(\Mod_{\MFtau}^\cell)$ to animated commutative graded $\FF_2$-algebras we obtain for $c \ge 0$ and $2 \le i \le c$ the \emph{higher divided power operation} \cite[Theorem 2.1]{MR574789} \[ \delta_i: \pi_{2*+c,*'}E \to \pi_{4*+c+i,2*'}E. \]
\end{itemize}

\begin{remark} \label{rmk:delta-indexing}
The indexing of the $\delta$-operations may appear peculiar at first glance.
It is determined by the desire for these operations to be compatible with the functor $\Mod_{\MFtau}^\cell \wequi \Mod_{H\FF_2}^\Z \to \Mod_{H\FF_2}$ forgetting the grading, and this functor involves shearing the homotopy groups (because each $S^{2n,n}$ should be sent to $S^0$).
We can introduce a similar shearing for the $Q$-operations by setting $Q_i(x_{p,q}) = Q^{i+2q}(x_{p,q})$.
Then this operation takes the form \[ Q_i: \pi_{2*+c,*'}E \to \pi_{4*+c+i,2*'} E, \] just like $\delta_i$.
In other words, both $\delta_i$ and $Q_i$ double the weight and increase the Chow degree by $i$.
\end{remark}

\begin{remark} \label{rmk:Qi-vanish}
A characteristic feature of derived algebras (as opposed to general $\scr E_\infty$-algebras) is the vanishing \[ Q^i(x) = 0 \text{ for } x \in \pi_{p,q}E \text{ with } 2q < i. \]
This follows, \eg from the computations in \cite{MR342592}, showing that the free cosimplicial commutative algebra on a generator in degree $d \le 0$ does not contain any classes (except for $1$) in degree $>d$.
This is in addition to the usual vanishing \[ Q^i(x) = 0 \text{ for } x \in \pi_{p,q}E \text{ with } i < p \] (and $Q^p(x) = x^2$).
For example, if $c(p,q) > 0$ (so $p>2q$) then all $Q^i(x)$ vanish (and in particular $x^2=0$), whereas if $c(p,q) \le 0$ (so $p \le 2q$) only the operations $Q^p(x), \dots, Q^{2q}(x)$ can be nonzero.
Expressed in terms of lower indices, we see that the only possibly nonzero operations are $Q_c(x), Q_{c+1}(x), \dots, Q_0(x)$, where $c=c(p,q) \le 0$.
\end{remark}

\subsubsection{Normed versus derived algebras} \label{subsec:normed-versus-derived}
The adjunction \[ \NSym: \Mod_{\MFtau} \adj \NAlg(\Mod_{\MFtau}): U \] is monadic.
By our convention, we denote the corresponding monad also by $\NSym$.
We know \cite[Proposition 7.6]{norms} that the monad $\NSym$ preserves sifted colimits and sends finite coproducts to tensor products.
Now $\NSym(\Sigma^{p,q}\MFtau)$ for $c(p,q) \le 0$ is cellular by \cite[Proposition 3.3]{arXiv:2210.07137}.
Since $\Mod_{\MFtau}^\cell$ is generated under sifted colimits and finite sums by $\Sigma^{p,q}\MFtau$ for $c(p,q) \le 0$, it follows that $\NSym$ preserves cellular modules.
In particular, it restricts to a monad on the subcategory $\Mod_{\MFtau}^\cell$.
Equivalently, defining \[ \NAlg(\Mod_{\MFtau})^\cell := \NAlg(\Mod_{\MFtau}) \times_{\Mod_{\MFtau}}\Mod_{\MFtau}^\cell, \] we have $\NSym(\Mod_{\MFtau}^\cell) \subset \NAlg(\Mod_{\MFtau})^\cell$ and the adjunction restricts.

In this section we will exhibit a factorization of the forgetful functor \[ \NAlg(\Mod_{\MFtau}^\cell) \to \CAlg(\Mod_{\MFtau}^\cell) \text{ through } \DAlg(\Mod_{\MFtau}^\cell) \to \CAlg(\Mod_{\MFtau}^\cell). \]
Equivalently, we will construct a factorization of morphisms of monads \[ \Sym \to \LSym \to \NSym. \]
In fact, we will lift $\NSym$ to a filtered monad $\NSym_*$ and then construct a factorization \[ \Sym_* \to \LSym_* \to \NSym_*. \]

We begin by pointing out a universal property of $\Sym_* \to \LSym_*$.
\begin{lemma} \label{lemm:alg-monad-LS-S}
The filtered monad $\LSym_*$ is the initial filtered monad under $\Sym_*$ which
\begin{enumerate}
\item preserves sifted colimits,
\item is polynomial \cite[Definition 4.2.7]{raksit2020hochschild}, and
\item preserves the subcategory $(\Mod_{\MFtau}^\cell)_{c \le 0}$.
\end{enumerate}
\end{lemma}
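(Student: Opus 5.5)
We reduce to Raksit's construction. Recall from \cite[\S4.2]{raksit2020hochschild} that $\LSym_*$ is built in two steps. First, on the heart $(\Mod_{\MFtau}^\cell)^\heart \wequi \Vect_{\FF_2}^\Z$ one restricts the filtered monad $\Sym_*$ to finitely generated free (= projective) objects. One then extends in two ways. The first is by sifted colimits (left Kan extension) to $(\Mod_{\MFtau}^\cell)_{c\ge 0}$. The second is, using polynomiality, by right Kan extension along totalizations of coconnective objects to $(\Mod_{\MFtau}^\cell)_{c\le 0}$. The two extensions are glued via the excisive/polynomial extension theorem \cite[Theorem 3.26]{brantner-mathew} (as used by Raksit) to all of $\Mod_{\MFtau}^\cell$. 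Concretely, Raksit shows that restriction along the inclusion of the subcategory $\mathrm{Vect}^{\omega}$ of finite sums of the $\Sigma^{2i,i}\MFtau$ induces an equivalence
\[ \Fun^{\Sigma,\mathrm{poly}}_{\le 0}(\Mod_{\MFtau}^\cell,\Mod_{\MFtau}^\cell) \wequi \Fun^{\mathrm{poly}}(\mathrm{Vect}^{\omega}, \Mod_{\MFtau}^\cell), \]
where the left side consists of sifted-colimit-preserving polynomial functors. This identification is monoidal for composition on the subcategory of those functors that in addition preserve $(\Mod_{\MFtau}^\cell)_{c\le 0}$ (or whose restriction lands in the heart-compatible range).

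The plan is then as follows. Let $F_*$ be a filtered monad satisfying (1)–(3), equipped with a map $\Sym_* \to F_*$. By (1) and (2), each $F_n$ and each $\LSym_n$ is determined by its restriction to $\mathrm{Vect}^\omega$. By construction, $\LSym_n|_{\mathrm{Vect}^\omega} \wequi \Sym_n^{\heart}$, the classical (underived) symmetric powers landing in the heart. The given map $\Sym_n \to F_n$ restricted to $\mathrm{Vect}^\omega$ should factor uniquely through $\Sym_n^\heart$. To see this, note first that $\Sym_n(V)$ for $V\in \mathrm{Vect}^\omega$ is Chow-connective, with $\pi_0^{c}$ equal to the classical symmetric power. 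Second, $F_n(V)$ is Chow-coconnective by (3), since $V$ lies in the heart. Maps from a connective object into a coconnective one factor uniquely through $\tau_{c\le 0}$, i.e. $\Map(\Sym_n V, F_n V)\wequi \Map(\Sym_n^\heart V, F_n V)$, naturally in $V$. This produces a unique natural transformation $\LSym_n|_{\mathrm{Vect}^\omega}\to F_n|_{\mathrm{Vect}^\omega}$ under $\Sym_n$, which extends uniquely to $\LSym_n \to F_n$ by the equivalence above.

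The remaining and main step is to show that this is a map of \emph{filtered monads}, i.e. compatible with the lax monoidal structures (units and multiplication $F_m\circ F_n\to F_{mn}$) coherently, and that the factorization is unique as such, so that the space of monad maps $\LSym_*\to F_*$ under $\Sym_*$ is contractible. I would handle this by working at the level of the monoidal $\infty$-category $\mathcal E$ of endofunctors satisfying (1)–(3). Conditions (1)–(3) are closed under composition: (3) is closed under composition directly, and polynomiality composes by \cite[4.2]{raksit2020hochschild}. So filtered monads with these properties are lax monoidal functors $\Z^\times_{\ge 0}\to\mathcal E$. I would then show that the reflection $G\mapsto$ ``left-right Kan extension of $\tau_{c\le 0}\circ G|_{\mathrm{Vect}^\omega}$'' is a localization of all sifted-colimit-preserving polynomial endofunctors onto $\mathcal E$ that is compatible with composition, in the sense of \cite[Proposition 2.2.1.9]{lurie-ha}. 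Then $\LSym_*$ is precisely the localization of $\Sym_*$, and the universal property for algebras in the localized monoidal category gives the claim. Verifying compatibility of this localization with composition is where I expect the real work. The key input is that $\tau_{c\le0}$-equivalences on $\mathrm{Vect}^\omega$-values are preserved under pre- and post-composition with functors satisfying (1)–(3); this again uses connectivity of $\Sym_n$ on the heart together with (3).
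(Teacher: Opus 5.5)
Your proposal has a genuine gap, in the step you yourself flag as the main work. Your pointwise step is correct, and it is the same input the paper uses. For $V$ in the heart, $\Sym_n(V)$ is Chow-connective with $\tau_{c\le 0}\Sym_n(V)\wequi\LSym_n(V)$, while $F_n(V)$ is Chow-coconnective by (3). The problem is how you promote this to filtered monads.

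\textbf{Why the monoidal localization fails.} Lurie's criterion (Higher Algebra, Prop.~2.2.1.9) requires that $L$-equivalences be stable under composition with \emph{every} object of the ambient monoidal category. Here this fails, even if you only compose with functors satisfying (1)--(3).
\begin{itemize}
\item Post-composition. $\Sym_2\to\LSym_2$ is a $\tau_{c\le 0}$-equivalence on $\mathrm{Vect}^\omega$, and $\Sigma^{-1}$ is colimit-preserving, polynomial, and preserves $(\Mod_{\MFtau}^\cell)_{c\le 0}$. The $\scr E_\infty$-square $\Sym^2(\MFtau)$ has a nonzero class in Chow degree $1$ (it is $\FF_2\otimes B\Sigma_{2+}$ in Chow degrees $\ge 0$). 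That class survives in $\tau_{c\le 0}\Sigma^{-1}\Sym^2(\MFtau)$ but has no counterpart in $\Sigma^{-1}\LSym^2(\MFtau)$.
\item Pre-composition. $\tau_{c\le 0}\Sym^2(\Sigma^{-1}\MFtau)$ contains a Chow-degree-$0$ class, whereas $\LSym^2(\Sigma^{-1}\MFtau)$ lives in Chow degrees $-2,-1$ (Remark~\ref{rmk:Qi-vanish}).
\end{itemize}
This is not fixable by shrinking the ambient category. Any monoidal category containing $\Sym_*$ and a target such as $\NSym_*$ (the case the paper needs) contains the mixed composite $\Sym_2\circ\NSym_2$. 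Since $\NSym_2(\MFtau)$ has a cell $S^{1,1}$ of Chow degree $-1$ (Corollary~\ref{cor:dtwomodtaudecomp}), the map $\Sym_2\circ\NSym_2\to\LSym_2\circ\NSym_2$ is not a $\tau_{c\le0}$-equivalence on the heart. The connectivity of $\Sym_n$ on the heart, which you invoke, is only available for composites of $\Sym$'s, not for mixed composites. Relatedly, the claim that restriction to $\mathrm{Vect}^\omega$ is ``monoidal for composition'' has no clear meaning, since composites do not preserve $\mathrm{Vect}^\omega$.

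\textbf{How the paper avoids this.} The paper never localizes the monoidal category.
\begin{enumerate}
\item It writes $\Map_{FilMon}(F_*,E_*)$ as a limit of spaces $\Map_{\End}(F_{i_1}\circ\dots\circ F_{i_m},E_{j_1}\circ\dots\circ E_{j_n})$. In these, only \emph{unmixed} composites occur: pure $F$-composites in the source and pure $E$-composites in the target.
\item Since everything satisfies (1) and (2), these mapping spaces can be computed after restriction to the heart (Raksit, Prop.~4.2.15), and then written as ends.
\item It therefore suffices to compare $\Map\bigl((\Sym_{i_1}\circ\dots\circ\Sym_{i_m})(X),(E_{j_1}\circ\dots\circ E_{j_n})(X)\bigr)$ with the same expression with $\LSym$ in place of $\Sym$, for $X$ in the heart.
\item This is exactly your pointwise argument applied to composites. $(\Sym_{i_1}\circ\dots\circ\Sym_{i_m})(X)$ is Chow-connective with $\tau_{c\le 0}$ equal to $(\LSym_{i_1}\circ\dots\circ\LSym_{i_m})(X)$. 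Meanwhile $(E_{j_1}\circ\dots\circ E_{j_n})(X)$ is Chow-coconnective by (3).
\end{enumerate}
So the fix is to replace the appeal to a compatible monoidal localization by this levelwise comparison of the limit diagrams. No compatibility with composition is needed.
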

\begin{proof}
Let us begin with some preparation.
Write $\End(\Mod_{\MFtau}^\cell)^{(1-3)}$ for the subcategory of endofunctors satisfying the three conditions.
It is closed under composition.
It follows from \cite[Proposition 4.2.15]{raksit2020hochschild} that the restriction \[ \End(\Mod_{\MFtau}^\cell)^{(1-3)} \to \Fun(\Mod_{\MFtau}^{\cell\heart}, \Mod_{\MFtau}^\cell) \] is fully faithful.
In fact, this is true for the larger category of endofunctors satisfying only the first two properties, and an endofunctor satisfies the third property if and only if its restriction does (use that the entire category $(\Mod_{\MFtau}^\cell)_{c \le 0}$ can be reached from $\Mod_{\MFtau}^{\cell\heart}$ by limits preserved by the functor \cite[Remark 4.2.16]{raksit2020hochschild} and filtered colimits).
In particular, $\LSym_*$ satisfies conditions (1) to (3).

It remains to show that, for any $E_* \in FilMon(\Mod_{\MFtau}^\cell)$ satisfying (1) to (3), we have \[ \Map_{FilMon}(\LSym_*, E_*) \wequi \Map_{FilMon}(\Sym_*, E_*). \]
Let momentarily $F_* \in FilMon(\Mod_{\MFtau}^\cell)$ be arbitrary.
Unravelling the definitions\NB{...}, the space $\Map_{FilMon}(F_*,E_*)$ can be written as a limit over spaces of the form \[ \Map_{\End(\Mod_{\MFtau}^\cell)}(F_{i_1} \circ \dots \circ F_{i_m}, E_{j_1} \circ \dots \circ E_{j_n}). \]
Using that $\Sym_*$ (as well as $\LSym_*, E_*$) satisfy the first two of our conditions, we may restrict from $\End(\Mod_{\MFtau}^\cell)$ to $\Fun(\Mod_{\MFtau}^{\cell\heart}, \Mod_{\MFtau}^\cell)$.
Writing the mapping spaces as a limit (\ie an end) again, we reduce to showing: for every $X \in \Mod_{\MFtau}^{\cell\heart}$ we have
\begin{align*}
  &\Map_{\Mod_{\MFtau}}((\Sym_{i_1} \circ \dots \circ \Sym_{i_m})(X), (E_{j_1} \circ \dots \circ E_{j_n})(X))\\
  \wequi &\Map_{\Mod_{\MFtau}}((\LSym_{i_1} \circ \dots \circ \LSym_{i_m})(X), (E_{j_1} \circ \dots \circ E_{j_n})(X)). 
\end{align*}

Since $(E_{j_1} \circ \dots \circ E_{j_n})(X) \in (\Mod_{\MFtau}^\cell)_{c \le 0}$, this follows from the fact that (essentially by construction, see Remark \ref{rmk:LSym-meaning}) \[ (\Sym_{i_1} \circ \dots \circ \Sym_{i_m})(X)_{c \le 0} \wequi (\LSym_{i_1} \circ \dots \circ \LSym_{i_m})(X). \]
\end{proof}

Our remaining task is to lift the normed algebra monad $\NSym$ (restricted to $\Mod_{\MFtau}^\cell$) to a filtered monad $\NSym_*$.
Since this is straightforward but somewhat tedious, we will be brief.\NB{...?}

The underlying filtered functor is \[ \NSym_* = \bigoplus_{i \le *} D_i, \] where $D_i$ denotes the motivic extended power.
Note that the $D_i$ preserve sifted colimits, are polynomial \cite[Corollary 5.33]{arXiv:2104.01057} and preserve $(\Mod_{\MFtau}^\cell)_{c \le 0}$.
(As seen in the proof, it suffices to show this for objects in the heart, which are sums of spheres $S^{2n,n}$.
Since Chow nonpositive objects are closed under sums, retracts and tensor products, this reduces to the case of $S^{2n,n}$ itself, for which see \cite[Proposition 3.3]{arXiv:2210.07137}.)
We summarize our remaining task as follows.

\begin{lemma}
The functor $\NSym_*$ can be supplied with the structure of a filtered monad under $\Sym_*$, satisfying properties (1) to (3) of Lemma \ref{lemm:alg-monad-LS-S}.
\end{lemma}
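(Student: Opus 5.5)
We have to do two things: make $\NSym_* = \bigoplus_{i \le *} D_i$ into a filtered monad, and produce a map of filtered monads $\Sym_* \to \NSym_*$. The multiplicative structure comes from the operadic origin of the $D_i$, and the map from $\Sym_*$ comes from comparing motivic with ordinary extended powers. Properties (1) to (3) have already been checked in the remarks preceding the statement. Each $D_i$ preserves sifted colimits, is polynomial by \cite[Corollary 5.33]{arXiv:2104.01057}, and preserves $(\Mod_{\MFtau}^\cell)_{c\le 0}$. These properties pass to finite direct sums, so they hold for each $\NSym_n$ and for the filtered functor.

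\textbf{The filtered monad structure.} The monad $\NSym = \bigoplus_{i \ge 0} D_i$ on $\Mod_{\MFtau}$ is the monad of a (motivic, normed) operad. Its composition
\[ D_i \circ D_j \to D_{ij} \]
is induced by the wreath product inclusions $\Sigma_j \wr \Sigma_i \to \Sigma_{ij}$, or more precisely by the corresponding finite étale maps used to define norms in \cite{norms}. I will therefore present $\NSym$ as a graded monad: a lax monoidal functor from the discrete monoidal category $\Z_{\ge 0}^\times$ to endofunctors, with value $D_i$ at $i$. This is the analogue of Raksit's construction of $\Sym_*$ from the graded pieces $\Sym^i$. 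The monad structure of the ordinary extended powers is encoded by a graded (symmetric sequence) structure on the operad, so the motivic version should follow formally once one knows that $\NSym$ arises as the free algebra monad for a graded operad in the normed category $\Mod_{\MFtau}$.

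From the graded monad we pass to a filtered one by taking partial sums $\bigoplus_{i\le n} D_i$. This is the same recipe by which Raksit obtains $\Sym_*$ from the graded symmetric powers. Concretely, left Kan extension along $\Z_{\ge0}^{\times,\mathrm{disc}} \to \Z_{\ge 0}^\times$ is lax monoidal, and it takes a graded monad to a filtered monad with these partial sums. The colimit of this filtered monad recovers $\NSym$ restricted to cellular modules, which is cellular by the discussion in \S\ref{subsec:normed-versus-derived}.

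\textbf{The map from $\Sym_*$.} Since the construction above is natural in graded monads, it suffices to give a map of graded monads $\{\Sym^i\} \to \{D_i\}$, that is, natural transformations $X^{\otimes i}_{h\Sigma_i} \to D_i(X)$ compatible with the wreath product compositions. These maps come from the canonical comparison between homotopy orbits for the constant (topological) $\Sigma_i$-action and the motivic extended power, namely the norm along the finite étale cover for the trivial torsor. This comparison is exactly what underlies the forgetful functor $\NAlg \to \CAlg$, which already exists as a map of monads $\Sym \to \NSym$. The remaining point is that this map respects the grading, which holds because the forgetful functor is compatible with the arity decomposition of both operads.

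\textbf{Main obstacle.} The real difficulty is the coherence, at the $\infty$-categorical level, of the graded structure on $\NSym$ and of the map from $\Sym$. I would try to obtain both at once by writing $\NSym$ as the monad associated with a symmetric-sequence-valued endofunctor. One route is to use the description of normed algebras as algebras over the $\scr E_\infty$-operad in the normed $\infty$-category, in the sense of \cite{norms} or \cite{arXiv:2104.01057}, where $D_i$ is the arity-$i$ component. Then the monad $\Sym$ for the commutative operad maps in through its image under the functor from spaces to motivic spaces, and this map is visibly arity-preserving. If that packaging is unavailable, a fallback is to filter by the \emph{weight} of the Tate generators. On cellular objects built from $S^{2n,n}$, $D_i$ lands in weights divisible in a controlled way, which may split the monad into arity components directly. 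I expect the first route to be the cleaner one.
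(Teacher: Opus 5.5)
Your check of properties (1)--(3) is fine and matches the paper. The construction of the filtered monad structure, however, has a genuine gap.

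\textbf{Why the graded-monad step fails.} A lax monoidal functor from the discrete monoid $\Z_{\ge 0}^\times$ with value $D_i$ at $i$ amounts to maps $D_i\circ D_j\to D_{ij}$. This data does not encode the multiplication of $\NSym=\bigoplus_i D_i$, because $D_i$ is not additive. Indeed $D_i(\bigoplus_j D_jX)$ splits as a sum of terms $\bigotimes_j D_{a_j}(D_jX)$ with $\sum_j a_j=i$, and the multiplication sends such a term to $D_{\sum_j ja_j}X$. For instance, the summand $X\otimes D_2X\subset D_2(D_1X\oplus D_2X)$ must go to $D_3X$, and no map $D_i\circ D_j\to D_{ij}$ provides this.

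For the same reason your ``concretely'' step is false. The usual argument that left Kan extension along $\Z_{\ge0}^{\times,\mathrm{disc}}\to\Z_{\ge0}^\times$ is lax monoidal needs the monoidal product on the target to commute with colimits in each variable. For composition of endofunctors one only has $\bigoplus_j F\circ G_j\to F\circ\bigoplus_j G_j$, which points the wrong way for producing $\NSym_m\circ\NSym_n\to\NSym_{mn}$, where $\NSym_m=\bigoplus_{i\le m}D_i$.

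\textbf{Why the operadic fallback does not help.} A symmetric sequence is the right bookkeeping for an operad. But $\NSym$ is not the monad of an operad in $\Mod_{\MFtau}$ in the naive sense: $D_n(X)$ is a motivic colimit over $B_\et\Sigma_n$ of norms, not $\scr O(n)\otimes_{h\Sigma_n}X^{\otimes n}$. So the coherence problem you single out as the main obstacle is left unresolved. Your map from $\Sym_*$, which you reduce to the same graded structure, inherits the gap, although the comparison $X^{\otimes i}_{h\Sigma_i}\to D_iX$ itself is the right one.

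\textbf{The missing idea.} The paper makes the filtration a property rather than extra structure. Working with a general presentably normed $\scr C$, it passes to the normed category $\PSh_\Sigma(\Sm_k)_{\sslash\scr C}$ of \cite{norms}. Its free normed algebra monad is $\NSym^M(X,E)=(\amalg_{n\ge0}B_\et\Sigma_n,E^{\otimes})$, and $\NSym^M_m$ is the restriction to $\amalg_{n\le m}B_\et\Sigma_n$.

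In that category, $\NSym^M_m\to\NSym^M$ is a monomorphism, and so are the composites $\NSym^M_i\circ\NSym^M_j\to\NSym^M\circ\NSym^M$. This cannot happen in the stable category $\Mod_{\MFtau}$, where only equivalences are monomorphisms. Hence a filtered monad structure compatible with $\NSym^M$ is unique if it exists. Existence is just the observation that $\NSym^M_i\circ\NSym^M_j\to\NSym^M$ factors through $\NSym^M_{ij}$. The same argument with $\CAlg$ in place of $\NAlg$ gives $\Sym^M_*\to\NSym^M_*$.

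One then transports to $\scr C(k)$ along $I(T)=M\circ T\circ R$, where $M\dashv R$ is the adjunction with $M$ normed. The functor $I$ is oplax monoidal for composition \cite{antieau2025filtrations} and strong on composites of the $\NSym^M_i$. So $\NSym_*:=I\NSym^M_*$ is a filtered monad receiving $\Sym_*=I\Sym^M_*$.
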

Thus by Lemma \ref{lemm:alg-monad-LS-S}, the constructed map $\Sym_* \to \NSym_*$ factors \emph{uniquely} through $\LSym_*$.
This will conclude our construction.

\begin{proof}
To construct $\NSym_*$, we may as well work in the context of a general presentably normed category $\scr C(\ph)$ (see \cite[Definition 6.5]{norms} for this notion).
We shall construct a filtered lift of the associated morphism of monads $\Sym_{\scr C(k)} \to \NSym_{\scr C}$.
Taking $\scr C(\ph) = \Mod_{\MFtau}(\ph)$ and restricting to cellular objects yields the desired construction.
Write $\PSh_\Sigma(\Sm)_{\sslash \scr C}^\otimes$ for the normed category of \cite[\S\S16.3,16.4]{norms} and denote its normed algebra monad by $\NSym^M$.
Then given $(X,E) \in \PSh_\Sigma(\Sm_k)_{\sslash \scr C}$ we have \[ \NSym^M(X,E) = (\amalg_{n \ge 0} B_\et \Sigma_n, E^{\otimes}), \] where $E^{\otimes}$ denotes a certain diagram indexed by $\amalg_{n \ge 0} B_\et \Sigma_n$ and recording the tensor powers of $E$.
For example, if $E \in \scr C(k)$ then  $\NSym^M(E)$ is the \emph{fundamental diagram} on $E$ of \cite[\S5.1]{arXiv:2104.01057}.
We now set \[ \NSym_*^M(X, E) = (\amalg_{* \ge n \ge 0} B_\et \Sigma_n, E^{\otimes}). \]
Crucially, the canonical map $\NSym_i^M(X,E) \to \NSym^M(X,E)$ is a monomorphism.\footnote{This is the main reason for passing to $\NSym^M$ in the first place.}
The same is true for composites of these functors, like $\NSym_i^M \circ \NSym_j^M \to \NSym^M \circ \NSym^M$.\NB{from here on to the end of the section there are quite a few claims which are unjustified for sake of ``briefness'' (=laziness...)}
This implies that $\NSym_*^M$ admits at most one structure of a filtered monad compatible with the monad structure on $\NSym^M$.
By inspection the map $\NSym_i^M(\NSym_j^M(X,E)) \to \NSym^M(\NSym^M(X,E)) \to \NSym^M(X,E)$ indeed factors through $\NSym_{ij}^M(X, E)$, and so the filtered monad $\NSym_*^M$ exists.

Now recall the adjunction \cite[\S\S16.1,16.3]{norms} \[ M: \PSh_\Sigma(\Sm_{(\ph)})_{\sslash \scr C} \adj \scr C: R, \] in which $M$ is even normed.
This induces by \cite[Lemma 2.1]{antieau2025filtrations} an \emph{oplax} monoidal functor \[ I: \End(\PSh_\Sigma(\Sm_k)_{\sslash \scr C}) \to \End(\scr C(k)), \] given by composing with $R$ and $M$.
We observe that when restricted to composites of the functors $\NSym_i^M$, $I$ is in fact strong monoidal.
Consequently $\NSym_* := I\NSym_*^M$ defines a filtered monad on $\scr C(k)$.
By inspection, it has the claimed properties.
Carrying through the same procedure with $\CAlg$ in place of $\NAlg$ produces a map $\Sym_*^M \to \NSym_*^M$ which upon applying $I$ yields $\Sym_* \to \NSym_*$, as needed.
\end{proof}

\begin{remark} \label{rmk:Z/2^n-normed}
Similar arguments also show that when working in the category $\Mod_{\M\Z/(2^n,\tau)}(k)$, (which is normed by Theorem \ref{thm:MFtau-normed}) there is still a morphism of filtered monads $\LSym_* \to \NSym_*$.
Indeed the key input was that $D_m(\1)$ is concentrated in Chow degree $\le 0$, which remains true modulo $\Z/2^n$ if it is true modulo $\Z/2$ (by writing $\Z/2^n$ as a finite extension of copies of $\Z/2$).
\end{remark}

\section{Motivic power operations}
Let $E \in \NAlg(\SH(\bar k))_{\MFtau/}$.
Then on $\pi_{**} E$, we have several sources of power operations:
\begin{itemize}
\item From $E$ being a normed $\MF$-algebra, we obtain for $i \in \Z$ the \emph{Voevodsky operations} \[ Q^i_V: \pi_{**'}E \to \pi_{*+i,*'+\ceil{i/2}}E. \]
  See \cite[Definition 5.5]{arXiv:2210.07150} for a construction.
\item Using the forgetful functor $\NAlg(\SH(\bar k))_{\MFtau/} \to \CAlg(\Mod_{\MFtau})_{H\FF_2/}$, we obtain for $i \in \Z$ the \emph{topological operations} \[ Q^i_t: \pi_{*,*'}E \to \pi_{*+i,2*'} E. \]
\item Using the forgetful functor $\NAlg(\SH(\bar k))_{\MFtau/} \to \DAlg(\Mod_{\MFtau}^\cell)$ from \S\ref{subsec:normed-versus-derived} and the recollections from \S\ref{subsec:derived-alg-recollections} obtain for $c \ge 0$ and $2 \le i \le c$ the \emph{higher divided power operations} \[ \delta_i: \pi_{2*+c,*'}E \to \pi_{4*+c+i,2*'}E. \]
\item We bundle together the Voevodsky and topological operations as follows.
  Given $i \in \Z$ and $x \in \pi_{p,q}E$ we set \[ Q^i(x) = \begin{cases} Q^i_V(x) & i > 2q \\ Q^i_t(x) & \text{else} \end{cases}. \]
\end{itemize}

\begin{remark}
The operation $Q^i$ being defined in terms of either $Q^i_V$ or $Q^i_t$ may appear somewhat odd at first sight.
It is justified as follows.
Given a class $x \in \pi_{p,q} E$, where $E \in \NAlg(\SH(\bar k))_{\MF/}$ (observe $E$ need not be $\tau$-torsion), we can form both $Q^i_t(x)$ and $Q^i_V(x)$.
These new classes have the same degree but (in general) different weight, and one may show that the class of lower weight can be obtained from the class of higher weight by multiplying by the appropriate power of $\tau$ \cite[Example 6.7]{bachmann-wilson}.
In particular, if $E$ happens to be $\tau$-torsion, then the class of lower weight is even zero.
Noting that the weight of $Q^i_t(x)$ is $2q$ whereas the weight of $Q^i_V(x)$ is $q + \ceil{i/2}$, we see that the definition of $Q^i(x)$ precisely picks out the higher weight (\ie potentially nonzero) of the two classes.
\end{remark}

It will be handy to introduce the following alternative indexing convention for these operations (see also Remark \ref{rmk:delta-indexing}).
\begin{definition}
Given $x \in \pi_{a,w}E$ we set \[ Q_k(x) = Q^{k+2w}(x) \text{ and } \delta^k(x) = \delta_{k-2w}(x). \]
\end{definition}

\begin{remark}
Note that we have \[ Q_i(x) \in \pi_{a+2w+i,2w + m(i)}E \text{ where } m(i) = \begin{cases} \ceil{i/2} & i \ge 0 \\ 0 & \text{else} \end{cases}. \]
Also $Q_i$ is a Voevodsky operation if $i \ge 0$, and a topological operation if $i \le 0$.
The operations $\delta_i$ (for $i \ge 0$) and $Q_i$ for $i \le 0$ always double the weight, and change the Chow degree by $i$.
\end{remark}

\begin{remark}
Beware that, while under complex realization the operations $Q^i$ correspond to topological operations of the same name, the topologists' $Q_i$ do not similarly correspond to ours.
\end{remark}

The aim of this section is to establish the following two companion theorems.

\begin{theorem} \label{thm:NAlg-ops}
Let $E \in \NAlg(\Mod_{\MFtau})$.
Then the power operations satisfy the following relations.
\begin{description}
\item[Additivity] The operations $\delta_j$ and $Q_i$ are additive, \emph{except that} if $x, y$ have Chow degree $c$ then $\delta_c(x+y) = \delta_c(x) + \delta_c(y) + xy$.
\item[Vanishing and squaring]
  \begin{enumerate}
  \item If $x$ has positive Chow degree, then $x^2=0$.
  \item If $x$ has Chow degree $c \le 0$, then $Q_c(x) = x^2$ and $Q_i(x) = 0$ for $i < c$.
  \end{enumerate}
\item[Cartan relations]
  \begin{enumerate}
  \item $Q_k(xy) = \sum_{i+j=k}\tau^{m(i)+m(j)-m(i+j)}Q_i(x)Q_j(y)$.
  \item Let $x$ have positive Chow degree $c_x$ and $y$ have Chow degree $c_y$.
    If $c_y > 0$ then $\delta_k(xy) = 0$ (whenever defined).
    If $c_y \le 0$ then for $2 \le k \le c_x+c_y$ we have \[ \delta_k(xy) = \sum_{\substack{i+j=k \\ 0 \ge j \ge c_y}} \delta_i(x)Q_j(y). \]
  \end{enumerate}
\item[Adem relations]
  \begin{enumerate}
  \item If $i > 2(j-m(j))$ then $x \in \pi_{**}E$ we have \[ Q_iQ_j(x) = \sum_k \tau^{e(i,j,k)} {{k-j-1} \choose {2k-i-2m(j)}} Q_{i+j-k+2(m(j)-m(k))}Q_k(x). \]
  \item If $i < 2j$ then \[ \delta_i\delta_j(x) = \sum_{(j+1)/2 \le k \le i-1} {{i-j+k-1} \choose {i-k}} \delta_{i+j-k}\delta_k(x). \]
  \item For $x$ of Chow degree $c$, $0 < k$ and $1 < n \le c$ we have \[ Q_k \delta_n(x) = A + B, \] where \[ A = \begin{cases} \sum_{\substack{0< i < j \\ i+j=k \\ i\cdot j \text{ even}}} Q_i(x) Q_j(x) & n=c \\ 0 & \text{else} \end{cases} \text{ and } B= \begin{cases}\delta_n Q_{k/2}(x) & 4 \mid k \\ \delta_{n-1} Q_{(k+1)/2}(x) & 4 \mid k+1\text{ and $n$ odd}\\ 0 & \text{else} \end{cases}. \]
  \end{enumerate}
\end{description}
\end{theorem}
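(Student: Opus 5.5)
Most of the listed relations come from a known source by transport along forgetful functors. The exception is the mixed Adem relation (3), and I treat it separately.

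First, the relations purely among $Q_i$ for $i\le 0$ (topological operations) and purely among the $\delta$'s follow by applying the forgetful functors
\[
\NAlg(\Mod_{\MFtau}^\cell)\to\DAlg(\Mod_{\MFtau}^\cell)\to\CAlg(\Mod_{\MFtau}^\cell)
\]
of \S\ref{subsec:normed-versus-derived}. For $E$ not cellular, we reduce to the cellular case by naturality: every relation is a universal identity among classes in $\pi_{**}$ of a free normed algebra on spheres, and $\NSym$ of spheres $\Sigma^{p,q}\MFtau$ is cellular. Under these functors we import several known results.
\begin{itemize}
\item The additivity of the $\delta_i$, including the failure $\delta_c(x+y)=\delta_c(x)+\delta_c(y)+xy$ in top degree, is the standard behaviour of higher divided powers on simplicial commutative $\FF_2$-algebras \cite{MR574789}.
\item The $\delta$-Adem relation (2) and the $\delta$-Cartan formula (2) follow from the same source. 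We must check that the Cartan formula there reads $\delta_k(xy)=\sum\delta_i(x)\,Q_j(y)$ with $Q_j$ the sheared topological squares on the Chow-nonpositive factor. This is where Remark \ref{rmk:Qi-vanish} enters, bounding $j$ between $c_y$ and $0$.
\item Vanishing and squaring follow from Remark \ref{rmk:Qi-vanish} together with $Q^p(x)=x^2$.
\end{itemize}

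Next come the relations involving $Q_i$ with $i\ge 0$, that is, Voevodsky operations. For these I would pass through normed $\MF$-algebras, where Voevodsky's Cartan and Adem relations hold \cite{arXiv:2210.07150}, and where $Q^i_t$ and $Q^i_V$ differ by powers of $\tau$ \cite[Example 6.7]{bachmann-wilson}. This comparison accounts for the exponents $m(i)+m(j)-m(i+j)$ and $e(i,j,k)$. Concretely, I would verify the Cartan formula (1) and Adem relation (1) universally on $\NSym_{\MF}$ of spheres, translating the classical motivic Adem relations into the lower indexing. Base change along $\MF\to\MFtau$ (normed by Theorem \ref{thm:MFtau-normed}) then gives the formulas in $E$, with $\tau$ acting by zero in $\pi_{**}E$ unless its exponent vanishes. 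Additivity of $Q_i$ for $i\ge 0$ is likewise inherited.

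The main obstacle is the mixed relation (3), $Q_k\delta_n(x)=A+B$, which intertwines a Voevodsky operation with a divided power. Neither framework sees both operations, so it must be computed directly. By naturality it suffices to take $x$ the fundamental class of $\NSym(\Sigma^{2w+c,w}\MFtau)$. One then needs to identify $\pi_{**}$ of the relevant part of $D_2D_2(S^{2w+c,w})\otimes\MFtau$ (weight $4$ in $x$). My plan is to compute the motive of $D_2(S^{p,q})$ explicitly via the cell structure of the motivic extended square, i.e.\ Thom spaces over $B_\et\Sigma_2$, as in \cite{arXiv:2104.01057,arXiv:2210.07137}. The key observation is that $\delta_n(x)$ for $n<c$ arises as a $\tau$-torsion shadow, i.e.\ a class killed by $\tau$ from a cell of $D_2$. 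Then $Q_k\delta_n(x)$ can be evaluated through the Bockstein along $\tau$: lift to $\MF$, apply $Q^{\ast}_V$ there, and use Adem relations for the lifted class (this gives term $B$, with the parity and $4\mid k$ conditions coming from binomial coefficients). In the top case $n=c$, $\delta_c(x)$ is the divided square $\gamma_2(x)$, and a Cartan-type formula for $Q_k(\gamma_2 x)$ produces the cross terms $A=\sum Q_i(x)Q_j(x)$. Both steps hinge on the detailed structure of $D_2(S^{p,q})\otimes\MFtau$, which I expect to be the hardest and most delicate part.
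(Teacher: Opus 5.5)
There are two genuine gaps. They sit exactly at the two relations the paper treats as non-formal.

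\emph{Mixed Cartan relation.} Cartan (2) with $c_y<0$ does not follow from Dwyer. His results concern simplicial commutative $\FF_2$-algebras, i.e.\ connective derived algebras. They cover $c_y>0$ (vanishing) and $c_y=0$ (where the formula reads $\delta_k(x)y^2$). For $c_y<0$, however, $y$ is not in the connective part, and neither Dwyer nor Priddy expresses $\delta_k(xy)$ in terms of the $Q_j(y)$. Degree bookkeeping via Remark \ref{rmk:Qi-vanish} only tells you which terms $\delta_i(x)Q_j(y)$ may occur; the real content is that every coefficient equals $1$. The paper in fact proves this relation for normed algebras first, and only then deduces the derived-algebra version from injectivity of $\pi_{**}\LSym\to\pi_{**}\NSym$. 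Its argument runs as follows. The $\Z\times\Z$-graded normed structure places $\delta_k(xy)$ in $D_2(x)\otimes D_2(y)$, so $\delta_k(xy)=\sum a_{i,j}\delta_i(x)Q_j(y)$. Then apply the boundary $\partial$ of the mod-$\tau$ cofiber sequence. By Corollary \ref{cor:partial-formula}, $\partial$ sends $\delta_j(x)$ to $\tau^{m(j-1)-1}Q_{j-1}(\tilde x)$ and kills the $Q_i$-classes. The integral Cartan formula for $Q_{k-1}(\tilde x\tilde y)$ then forces every $a_{i,j}=1$.

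\emph{Mixed Adem relation.} Here your mechanism does not work as stated. In the universal example, Corollary \ref{cor:partial-formula} shows that every $\delta_n(x)$ with $2\le n\le c$ has nonzero $\tau$-boundary. So it is precisely \emph{not} a reduction of an integral class and cannot be ``lifted to $\MF$''. If you instead mean to work with its boundary $\tau^{m(n-1)-1}Q_{n-1}(\tilde x)$, you need a formula for how $\partial$ interacts with $Q_k$. You neither state nor prove one, and it is a Nishida/Bockstein-type statement that needs its own argument. Moreover, $\partial$ kills all reductions of integral classes, so it cannot detect the $A$-terms.

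The genuinely delicate coefficient is that of $\delta_{n-1}Q_{(k+1)/2}(x)$ when $4\mid k+1$. There $Q_{(k+1)/2}(x)$ has Chow degree $n$, so $\delta_{n-1}$ is additive on it and polarization cannot see it. The paper proceeds as follows:
\begin{enumerate}
\item It reduces to $n=c$ by suspension.
\item It uses the basis of $\NSym^4(x)$ from Theorem \ref{thm:NAlg-pres}(2) to write $Q_k\delta_n(x)$ with unknown coefficients. That basis comes cheaply from the bar-construction induction starting in Chow degree $\le 0$, so no cell-by-cell computation of $D_2D_2(S^{p,q})$ is needed.
\item It fixes all but one coefficient by polarization $x\mapsto x+y$, using $\delta_c(x+y)=\delta_c(x)+\delta_c(y)+xy$ and the $Q$-Cartan formula.
\item It fixes the last coefficient with the mod-$2$ Bockstein. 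This exists because $\M\Z_2^\comp/\tau$ is normed (Theorem \ref{thm:MFtau-normed}). One uses $\beta Q_i=(i-1)Q_{i-1}$ and $\beta\delta_n(y)=n\delta_{n-1}(y)+y\beta(y)$ for $y$ of Chow degree $n$, the latter computed in derived algebras over $\Z_2$.
\end{enumerate}
This last ingredient is absent from your plan.

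A smaller point: when you verify the $Q$-Cartan and $Q$-Adem relations on $\NSym_{\MF}$ of spheres, only Chow degree $\le 0$ is $\tau$-torsion free and so reducible to topology. Positive Chow degree needs a separate argument: the $S^1$-stability of the $Q_i$, and for Cartan also the derivation property of $S^1$-suspension.
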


\begin{remark}
In the Cartan and Adem relations, we have a power of $\tau$.
Since we are working modulo $\tau$, this is always $0$ (if the exponent is positive) or $1$ (if the exponent is zero).
The exponent $e(i,j,k)$ in the Adem relations can be determined by comparing the weights on both sides; one finds \[ e(i,j,k) = 2m(k) + m(i+j-k+2m(j)+2m(k)) - 2m(j) - m(i). \]
Part of the assertion is that this expression is nonnegative unless the binomial coefficient vanishes.
\end{remark}

\begin{definition}
Given a sequence of integers $I=(i_1, \dots, i_s)$, by $Q_I(x)$ we mean $Q_{i_1} \dots Q_{i_n}(x)$ and by $\delta_I(x)$ we mean $\delta_{i_1} \dots \delta_{i_n}(x)$, whenever the latter is defined.

We call a sequence $I=(i_1, i_2, \dots, i_n)$ \emph{admissible} if $i_s \ge 2i_{s+1}$, and \emph{op-admissible}\NB{terminology??} if $i_s \le 2(i_{s+1}-m(i_{s+1}))$.

Let $x$ be a class in bidegree $(p,q)$.
We call the operation $Q_i(x)$ \emph{allowable} if $i > \min(0, c(p,q))$.
We call a sequence $I$ $(p,q)$-allowable if all the constituent operations $Q_{i_n}(x)$, $Q_{i_{n-1}}(Q_{i_n}(x))$, \dots, are allowable.
\end{definition}

\begin{notation}
It will be convenient in what follows to write expressions like $\NSym(x_{p,q}) := \NSym\Sigma^{p,q} \MFtau$, meaning the free normed $\MFtau$-algebra on a generator in bidegree $(p,q)$, which we will give the name $x_{p,q}$.
We analogously use $\LSym(x_{p,q})$ for free derived algebras and $\NSym_{\MF}(x_{p,q})$ for free normed $\MF$-algebras.
\end{notation}

\begin{theorem} \label{thm:NAlg-pres}
Let $(p,q) \in \Z^2$.
\begin{enumerate}
\item $\pi_{**}\NSym(x_{p,q})$ is freely generated as an $\FF_2$-algebra by the tautological class $x_{p,q}$ (the generator in bidegree $(p,q)$) under the above power operations, subject only to the above relations.
\item $\pi_{**}\NSym(x_{p,q})$ is freely generated as an $\FF_2$-algebra by elements of the form $\delta_J Q_I x$, subject only to the relation that squares of generators of positive Chow degree vanish.
  Here $I$ must be op-admissible and $(p,q)$-allowable, $J$ must be admissible and the $\delta$-operations must be defined.
\item The canonical map \[ \LSym(\{Q_J x_{p,q}\}_J) \to \NSym(x_{p,q}) \in \DAlg(\Mod_{\MFtau}^\cell) \] is an equivalence.
  Here $J$ runs through op-admissible sequences of \emph{positive} integers.
\end{enumerate}
\end{theorem}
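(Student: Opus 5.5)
Since $\NSym$ preserves sifted colimits and sends sums to tensor products, $\pi_{**}\NSym(x_{p,q})$ is controlled by the motivic extended powers $D_n(\Sigma^{p,q}\MFtau)$. I would prove (3) first and deduce (2) and (1) from it.

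\emph{Step 1: an upper bound.} I would construct the map in (3) using the forgetful functor $\NAlg(\Mod_{\MFtau}^\cell)\to\DAlg(\Mod_{\MFtau}^\cell)$ of \S\ref{subsec:normed-versus-derived}, which sends each generator to the class $Q_J x_{p,q}$. Theorem~\ref{thm:NAlg-ops} is taken as already established. Its relations show that every product of operations applied to $x$ can be rewritten in the normal form of (2): use the mixed Adem relation to move the $\delta$'s to the left of the $Q$'s, use the Adem relations to make $I$ op-admissible and $J$ admissible, and use vanishing to restrict to allowable $I$. Hence the monomials of (2) span $\pi_{**}\NSym(x_{p,q})$.

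\emph{Step 2: identifying the derived algebra side.} Next I would recall the homotopy of free derived algebras on $\Mod_{\MFtau}^\cell\wequi\Mod_{H\FF_2}^\Z$. Each generator $Q_Jx$ has some Chow degree $c$. If $c>0$, the free simplicial commutative algebra (Bousfield, Dwyer) has homotopy that is exterior on the admissible $\delta_J$ applied to the generator, together with divided power structure. If $c\le 0$, the free cosimplicial algebra (Priddy) gives a polynomial algebra on the allowed $Q_i$ with $c\le i\le 0$, modulo the relation $Q_c=$ squaring. Tensoring these over all $J$, which is possible because $\LSym$ sends sums to tensor products, yields exactly the generators and relations listed in (2). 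This shows that the source of (3) has the size claimed in (2), counted bidegree by bidegree.

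\emph{Step 3: equivalence by counting.} To prove (3) it suffices to show that the map is injective on homotopy, or to compare dimensions in each bidegree. Both sides are cellular, so I would compare the Poincaré series of $\pi_{**}$. For this I need an independent computation of $\pi_{**}D_n(\Sigma^{p,q}\MFtau)$. I would obtain it from the known motivic Steenrod and Dyer--Lashof computation of $\pi_{**}\NSym_{\MF}(x_{p,q})$ (Voevodsky operations, free on admissible sequences) and then reduce modulo $\tau$. The cofiber sequence for $\tau$ gives $\tau$-torsion contributions in degree shifted by $(1,-1)$. These are precisely the source of the $\delta$'s: Voevodsky operations on classes of positive Chow degree are $\tau$-divisible. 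Matching this count against the Step 2 count gives a bijection on the spanning set, hence linear independence, hence (3). Then (2) follows from Step 2 and (1) follows from (2) together with Step 1.

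\emph{Main obstacle.} The hard step is Step 3. It requires a precise description of the $\tau$-torsion in $\pi_{**}D_2(S^{p,q})\otimes\MF$, and then an identification of the resulting torsion classes with the $\delta_i$ defined through the derived-algebra structure, not merely a matching of dimensions. I would first treat $n=2$ by computing the motive of $D_2(S^{p,q})$ via the Thom-space description over $B_\et\Sigma_2$. Then I would bootstrap to all $n$ through the composite filtration $D_2\circ D_2\circ\cdots$, using the standard transfer and Nakaoka arguments to show that $D_{2^k}$ captures everything and that odd-order $D_n$ decompose.
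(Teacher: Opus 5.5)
There is a genuine gap, and it is the step you flag yourself. Step~3 needs an independent computation of $\pi_{**}D_n(\Sigma^{p,q}\MFtau)$ for all $n$ in positive Chow degree. The integral input you cite (Theorem~\ref{thm:NAlg-integral-pres}: $\tau$-torsion free, polynomial on the $Q_Ix$) is only available for $c(p,q)\le 0$. In that range it produces no $\delta$'s at all, since every $Q_Jx$ with $J$ positive then has Chow degree $\le 0$. For $c(p,q)>0$, the homotopy of $D_n^{\MF}(\Sigma^{p,q}\MF)$ has $\tau$-torsion, already for $n=2$ by Proposition~\ref{lemma:D2sphere}. Determining it for every $n$ via transfer and Nakaoka arguments over $B_\et\Sigma_n$, and identifying the resulting classes with the $\delta_j$, is not a routine supplement. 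It is where all the difficulty lies, and the proposal does not carry it out.

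Step~1 has two further problems. First, the paper proves the mixed Cartan and mixed Adem relations of Theorem~\ref{thm:NAlg-ops} \emph{using} Theorem~\ref{thm:NAlg-pres}(2): one needs a basis of $\pi_{**}\NSym^4(x)$, resp.\ of $\pi_{**}(\NSym(x)\otimes\NSym(y))$, to list the possible terms. Taking those relations as given is therefore circular. Second, even granting all relations, normal-form rewriting only shows that the subalgebra \emph{generated by the operations} on $x$ is spanned by the monomials of (2). It does not show that this subalgebra exhausts $\pi_{**}\NSym(x_{p,q})$.

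The missing idea is that positive Chow degree never has to be computed directly.
\begin{enumerate}
\item For $c(p,q)\le 0$, reducing Theorem~\ref{thm:NAlg-integral-pres} modulo $\tau$ gives (2). Comparing with Priddy's computation (Theorem~\ref{thm:DAlg-pres}(2)) then gives (3), because both sides are polynomial on the matching generators $Q_{I_1}Q_Jx$, with $I_1$ nonpositive and $J$ positive.
\item The forgetful functor $\NAlg(\Mod_{\MFtau})\to\DAlg(\Mod_{\MFtau}^\cell)$ preserves colimits, so it commutes with the bar construction $BA=\1\otimes_A\1$. We have $B\NSym(x_{p,q})\wequi\NSym(x_{p+1,q})$ and $B\LSym(V)\wequi\LSym(\Sigma^{1,0}V)$, and the positive $Q_J$ are $S^1$-stable. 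Hence applying $B$ to the equivalence (3) at $(p,q)$ yields the equivalence (3) at $(p+1,q)$.
\item Induction on $p$ gives (3) for all $(p,q)$. Then (2) follows from the free derived algebra computation exactly as in your Step~2. The $\delta$'s, and with them all the $\tau$-torsion phenomena, come for free from the derived-algebra side.
\item Only after this are the mixed relations of Theorem~\ref{thm:NAlg-ops} established, and (1) is deduced from (2) together with them.
\end{enumerate}
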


\begin{example}[Tensors in $\NAlg$ versus $\CAlg$] \label{ex:tensors-NAlg-CAlg}
We will show now that $\P^1 \tensor{\1} \NSym(x_{1,0}) \ne \P^1 \tensor{\1}_\CAlg \NSym(x_{1,0})$.
One can deduce from this the same statement with $\Gm$ in place of $\P^1$ as well.
The left hand side is $\NSym(x_{1,0}) \otimes \NSym(x_{3,1})$ (where $x_{3,1} = \sigma_{\P^1} x_{1,0}$ is the $\P^1$-suspension of $x_{1,0}$), and in particular, $x_{3,1}^2 = 0$.
We will show that in the right hand side, $\sigma_{\P^1}(x_{1,0})^2 \ne 0$.
To see this, using Theorem \ref{thm:NAlg-pres}(3) we obtain a splitting (in $\DAlg$ whence in $\CAlg$) $\NSym(x_{1,0}) \wequi \LSym(x_{1,0}) \otimes R$, so it will be enough to analyze $\P^1 \tensor{\1}_\CAlg \LSym(x_{1,0})$.
We have $\pi_{**} \LSym(x_{1,0}) \wequi \FF_2[x_{1,0}]/x_{1,0}^2$.
It follows that we can resolve $\LSym(x_{1,0})$ by starting with $\Sym(x_{1,0})$, coning off (in $\CAlg$) a class $y_{2,0}$ (hitting $x_{1,0}^2$), and then iteratively coning off further classes of Chow degree $>2$.
Thus $\P^1 \tensor{\1}_\CAlg \LSym(x_{1,0})$ can be obtained from $\Sym(x_{1,0},x_{3,1})$ by coning off $y_{2,0}$, $y_{4,1}$ and further classes of Chow degree $>2$.
In particular, no class of Chow degree $2$ and weight $2$ is hit, and $x_{3,1}^2 \ne 0 \in \pi_{**}(\P^1 \tensor{\1}_\CAlg \LSym(x_{1,0}))$.
\end{example}

We will prove these results later in this section, starting in \S\ref{subsec:easy-proofs}.
For now, let us make some observations about admissibility, excess, and so on.

\begin{remark} \label{rmk:op-admissible}
Note that an op-admissible sequence $I$ is always of the form $I=I_1I_2$, where all the entries of $I_2$ are positive (and correspond to Voevodsky operations), whereas all the entries of $I_1$ are nonpositive (and correspond to topological operations).
Moreover a sequence of nonpositive integers $I_1=(i_1, \dots, i_n)$ is op-admissible if and only if $i_s \le 2i_{s+1}$, and if $I_2$ is a sequence of positive integers, then $I_1I_2$ is op-admissible if and only if and only if $I_1$ and $I_2$ are.
\end{remark}

\begin{remark} \label{rmk:op-chow-degree}
Observe that while the bidegree of the operations $Q_i$ or $\delta_j$ depends on the bidegree of the specific class it is applied to, the \emph{Chow degree} does not.
For $\delta_j$ the shift in Chow degree is $j$, and for $Q_i$ it is $i-2m(i)$.
This quantity equals $i$ if $i \le 0$, and equals $0$ or $-1$ depending on the parity of $i$ when $i > 0$.
\end{remark}

\begin{definition}
Let $J=(j_1, \dots, j_m)$, $I=(i_1, \dots, i_n)$.
The \emph{excess} of $(J,I)$ is \[ e(J,I) = j_1 - \sum_{s>1} j_s - \sum_t (i_t-2m(i_t)) \]
(here $J$ must not be empty).
If $i_1 \le 0$ then we set \[ e(I) = i_1 - \sum_{t > 1} (i_t-2m(i_t)). \]
If instead $i_1 > 0$ then $e(I) := \infty$.
\end{definition}

\begin{lemma} \label{lemm:allowable-excess}
Let $J$ be admissible and $I$ op-admissible.
Then $I$ is $(p,q)$-allowable if and only if $e(I) > c(p,q)$.
The $\delta$-operations are defined if and only if $j_m \ge 2$ and $e(J,I) \le c(p,q)$.
In this case we have $e(I) = \infty$.
\end{lemma}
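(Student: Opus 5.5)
We argue by tracking the Chow degree step by step along the composite, using Remark~\ref{rmk:op-chow-degree}: the Chow degree of $Q_{i}(y)$ is $c(y) + (i - 2m(i))$, and that of $\delta_j(y)$ is $c(y)+j$. Write $I=(i_1,\dots,i_n)$ and set $x_t = Q_{i_t}Q_{i_{t+1}}\cdots Q_{i_n}(x)$, with $x_{n+1}=x$. Then
\[ c(x_{t+1}) = c(p,q) + \sum_{s>t}(i_s - 2m(i_s)). \]
The operation $Q_{i_t}$ applied to $x_{t+1}$ is allowable iff $i_t > \min(0, c(x_{t+1}))$.

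\emph{Case $i_1>0$.} By Remark~\ref{rmk:op-admissible} all $i_t$ are then positive, so each $i_t>0\ge \min(0,\cdot)$ and every step is allowable. Since $e(I)=\infty>c(p,q)$, the equivalence holds.

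\emph{Case $i_1\le 0$.} By Remark~\ref{rmk:op-admissible}, $I=I_1I_2$ with $I_1=(i_1,\dots,i_r)$ nonpositive, $r \ge 1$, and $I_2$ positive. The steps from $I_2$ are automatically allowable. For $t\le r$, the condition $i_t>\min(0,c(x_{t+1}))$ with $i_t\le 0$ is equivalent to $i_t > c(x_{t+1})$, since $i_t>0$ is impossible. Thus allowability of $I$ amounts to
\[ i_t - \sum_{s>t}(i_s-2m(i_s)) > c(p,q) \quad \text{for } 1\le t\le r. \]
The case $t=1$ is exactly $e(I)>c(p,q)$, so it remains to show that the $t=1$ inequality implies the others. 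Put $f(t)=i_t-\sum_{s>t}(i_s-2m(i_s))$. For $t<r$ we have $f(t)-f(t+1)=i_t-2i_{t+1}$, using $m(i_{t+1})=0$, and this is $\le 0$ by op-admissibility of $I_1$ ($i_t\le 2i_{t+1}$). Hence $f(1)\le f(2)\le\dots\le f(r)$, and $f(1)>c(p,q)$ gives all of them.

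\emph{The $\delta$-part.} The operation $\delta_{j}$ is defined on a class $y$ iff $2\le j\le c(y)$. Set $y_0=Q_I(x)$, so that
\[ c(y_0)=c(p,q)+\sum_t (i_t-2m(i_t)), \]
and apply $\delta_{j_m},\dots,\delta_{j_1}$ in turn. The conditions are $j_s\ge 2$ and $j_s\le c(y_0)+\sum_{u>s}j_u$ for every $s$. Define $g(s)=j_s-\sum_{u>s}j_u-\sum_t(i_t-2m(i_t))$; the condition is $g(s)\le c(p,q)$, and $g(1)=e(J,I)$. Admissibility gives $g(s)-g(s+1)=j_s-2j_{s+1}\ge 0$, so $g$ is nonincreasing and $g(1)\le c(p,q)$ implies all the others. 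Likewise $j_m\ge2$ together with $j_s\ge 2j_{s+1}$ gives $j_s\ge2$ for all $s$.

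\emph{Final claim.} When the $\delta$'s are defined, $c(y_0)\ge j_m\ge 2>0$. If $I$ contained a nonpositive entry, i.e.\ $i_1\le0$, then by allowability $c(x_2)<i_1\le 0$, so $c(Q_I x)=c(x_2)+i_1<0$, a contradiction. Hence $i_1>0$ (or $I$ is empty, where by convention $e(I)=\infty$), and $e(I)=\infty$. The only delicate points are the telescoping monotonicity of $f$ and $g$ and this sign argument in the last step; the rest is bookkeeping.
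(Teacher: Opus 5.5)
Your proof is correct and follows essentially the same route as the paper. Both arguments split $I=I_1I_2$ into its nonpositive prefix and positive suffix, track Chow degrees via Remark~\ref{rmk:op-chow-degree}, and use (op-)admissibility to show that the relevant excesses are monotone, so only the outermost condition matters. Both then get $e(I)=\infty$ from the fact that an allowable nonpositive outermost $Q_{i_1}$ forces negative Chow degree. Your version is slightly more careful: you compute the telescoping differences explicitly and use the correct Chow shift $i_t-2m(i_t)$ for the positive entries.
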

\begin{proof}
Let us write $I=I_1I_2$ as in Remark \ref{rmk:op-admissible}.
If $I_1=\emptyset$ then $I$ is $(p,q)$-allowable and $e(I) = \infty > c(p,q)$, so this case works.
Set $I_1=(i_1, \dots, i_n), I_2=(i_1', \dots, i_{n'}')$.
By Remark \ref{rmk:op-chow-degree}, for $r \le n$ the Chow degree of \[ Q_{i_r, i_{r+1}, \dots, i_n}Q_{I_2}(x_{p,q}) \] is \[ c(p,q) + \sum_{s \ge r}i_s + \sum_t(i_t' - m(i_t')). \]
Thus the outermost operation in \[ Q_{i_r}Q_{i_{r+1}, \dots, i_n}Q_{I_2}(x_{p,q}) \] is allowable if and only if \[ i_{r-1} > c(p,q) + \sum_{s \ge r}i_s + \sum_t(i_t' - m(i_t')), \] \ie $e((i_{r-1}, \dots, i_n)I_2) > c(p,q)$.
It remains to observe that because of the op-admissibility, the sequence $r \mapsto e((i_{r}, \dots, i_n)I_2)$ is monotonically increasing, and so $e(I_1I_2) > c(p,q)$ implies that all constituent operations are allowable.

In order for the last operation in $\delta_{j_{r-1}}\delta_{j_r, i_{j+1}, \dots, j_m}Q_I(x)$ to be defined we need $j_{r-1} \ge 2$ (which follows from $j_m \ge 2$) and \[ j_{r-1} \le c(p,q) + \sum_{s\ge r} j_s + \sum_t (i_t-2m(i_t)), \] \ie $e((j_{r-1}, \dots, j_m),I) \le c(p,q)$.
Again admissibility implies that the sequence of excesses is monotonically decreasing, and so $e(J,I) \le c(p,q)$ implies that all constituent operations are defined.

The parenthetical remark holds because if $i_1 \le 0$ and the sequence was $(p,q)$-allowable, then the Chow degree of $Q_{i_2, \dots, i_n}(x_{p,q})$ must have been negative, whence so is the Chow degree of $Q_I(x)$, and so no $\delta$-operation can be defined.
\end{proof}

Let us also observe the following expression of $(p,q)$-allowability in terms of the upper indexing of the $Q$-operations.
\begin{lemma} \label{lemm:topological-allowability}
Let $I=(i_1, \dots, i_n)$ be a sequence of integers, $(p,q)$ a bidegree with $c(p,q) \le 0$, and define $\tilde I =(\tilde i_1, \dots, \tilde i_n)$ in such a way that $Q^{\tilde I}(x_{p,q}) = Q_I(x_{p,q})$.
Then $I$ is op-admissible and $(p,q)$-allowable if and only if $\tilde I$ is admissible of excess $> p$ in the usual sense.
\end{lemma}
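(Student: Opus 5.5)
The plan is to translate everything one operation at a time, using the indexing conventions directly. Write $y_n = x_{p,q}$ and $y_{s} = Q_{i_s}(y_{s+1})$. Let $y_{s+1}$ have bidegree $(a_{s+1}, w_{s+1})$. By the remark following the definition of lower indexing, $y_s$ then has bidegree
\[ (a_s, w_s) = (a_{s+1} + 2w_{s+1} + i_s,\ 2w_{s+1} + m(i_s)), \]
and the corresponding upper index is $\tilde i_s = i_s + 2w_{s+1}$. With this bookkeeping the proof splits into three steps.

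\emph{Step 1: Chow degrees stay nonpositive.} By Remark \ref{rmk:op-chow-degree}, $Q_{i}$ changes the Chow degree by $i - 2m(i)$, which is $\le 0$ for every $i$. Since $c(p,q) \le 0$, every $y_s$ has Chow degree $c_s := c(a_s,w_s) \le 0$. Hence $\min(0, c_s) = c_s$, and $(p,q)$-allowability says exactly that $i_s > c_{s+1}$ for every $s$. Since $i_s + 2w_{s+1} = \tilde i_s$ and $c_{s+1} = a_{s+1} - 2w_{s+1}$, this is equivalent to $\tilde i_s > a_{s+1} = \deg y_{s+1}$. So allowability is precisely the stepwise condition that each upper operation $Q^{\tilde i_s}$ is applied strictly above the degree of its input.

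\emph{Step 2: admissibility.} For the usual admissibility $\tilde i_s \le 2\tilde i_{s+1}$, substitute $\tilde i_s = i_s + 2w_{s+1}$, $w_{s+1} = 2w_{s+2} + m(i_{s+1})$ and $\tilde i_{s+1} = i_{s+1} + 2w_{s+2}$. The inequality becomes
\[ i_s + 4w_{s+2} + 2m(i_{s+1}) \le 2i_{s+1} + 4w_{s+2}, \]
that is, $i_s \le 2(i_{s+1} - m(i_{s+1}))$. This is exactly op-admissibility, and it holds termwise with the weights cancelling. So $I$ is op-admissible if and only if $\tilde I$ is admissible.

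\emph{Step 3: excess, which I expect to be the only delicate point.} Assume $\tilde I$ is admissible. Since $\deg y_{s+1} = p + \sum_{t > s} \tilde i_t$, the stepwise conditions from Step 1 read
\[ \tilde i_s - \sum_{t>s} \tilde i_t > p \quad \text{for all } s. \]
The usual excess condition is the case $s = 1$. I then use the standard observation that admissibility makes these quantities monotone. Indeed,
\[ \Bigl(\tilde i_{s+1} - \sum_{t>s+1}\tilde i_t\Bigr) - \Bigl(\tilde i_s - \sum_{t>s}\tilde i_t\Bigr) = 2\tilde i_{s+1} - \tilde i_s \ge 0. \]
So the condition for $s=1$ implies all the others, and the strict inequality propagates. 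Combining Steps 1--3 shows that $I$ is op-admissible and $(p,q)$-allowable if and only if $\tilde I$ is admissible of excess $> p$. The only care needed is to keep the strictness consistent: allowability requires $i > c$ strictly, matching excess $> p$ strictly rather than $\ge p$, because the squares $Q_c(x) = x^2$ are excluded. One should also check the empty and length-one cases separately, which are immediate.
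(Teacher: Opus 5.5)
Your proof is correct and follows the paper's route. Your Step 2 is exactly the paper's admissibility computation, which the paper does for a pair $(i,j)$ with the inner weight written as $q$. Your Steps 1 and 3 spell out what the paper merely cites as well known: under admissibility, excess $> p$ is equivalent to no constituent operation being a square or vanishing for degree reasons, which you prove by the same monotonicity-of-excess argument used in Lemma~\ref{lemm:allowable-excess}. The only slip is cosmetic: the input should be $y_{n+1} = x_{p,q}$ rather than $y_n$, and this is what your later formula $\deg y_{s+1} = p + \sum_{t>s}\tilde i_t$ already assumes.
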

\begin{proof}
For admissibility, we need only consider sequences of the form $I=(i, j)$.
Then $Q_iQ_j(x_{p,q}) = Q^{i+4q+2m(j)}Q^{j+2q}(x_{p,q})$.
Now $i \le 2(j-m(j))$ if and only if $i+4q+2m(j) \le 2(j+2q)$, so $I$ is op-admissible if and only if $\tilde I$ is admissible in the usual sense.
It is well-known that the classical excess condition precisely ensures that none of the constituent operations in $Q^{\tilde I}(x_{p,q})$ result in a square or vanish for degree reasons\NB{ref}, which is the same as our condition.
\end{proof}

\subsection{Derived algebras}
Before proving our results about normed spectra, it will be helpful to recall (and slightly extend) analogous results about free derived rings.
Recall from \S\ref{subsec:derived-alg-recollections} (in particular Remark \ref{rmk:Qi-vanish}) that these carry $\scr E_\infty$-operations $Q_i$ and simplicial operations $\delta_i$.

\begin{theorem} \label{thm:DAlg-relations}
Let $E \in \DAlg(\Mod_{\MFtau}^\cell)$.
Then the power operations satisfy the following relations.
\begin{description}
\item[Additivity] The operations $\delta_j$ and $Q_i$ are additive, \emph{except that} if $x, y$ have Chow degree $c$ then $\delta_c(x+y) = \delta_c(x) + \delta_c(y) + xy$.
\item[Vanishing and squaring]
  \begin{enumerate}
  \item If $x$ has Chow degree $c$, then $Q_c(x) = x^2$ and $Q_i(x) = 0$ for $i < c$.
  \item For $x$ of any Chow degree and $i > 0$ we have $Q_i(x) = 0$.
    (In particular $x^2=0$ if $x$ has positive Chow degree.)
  \end{enumerate}
\item[Cartan relations]
  \begin{enumerate}
  \item $Q_k(xy) = \sum_{i+j=k}Q_i(x)Q_j(y)$.
  \item Let $x$ have positive Chow degree $c_x$ and $y$ have Chow degree $c_y$.
    If $c_y > 0$ then $\delta_k(xy) = 0$ (whenever defined).
    If $c_y \le 0$ then for $2 \le k \le c_x+c_y$ we have \[ \delta_k(xy) = \sum_{i+j=k, 0 \ge j \ge c_y} \delta_i(x)Q_j(y). \]
  \end{enumerate}
\item[Adem relations]
  \begin{enumerate}
  \item If $i > 2j$ then \[ Q_iQ_j(x) = \sum_k {{k-j-1} \choose {2k-i}} Q_{i+j-k} Q_k(x). \]
  \item If $i < 2j$ then \[ \delta_i\delta_j(x) = \sum_{(j+1)/2 \le k \le (i+j)/3} {{i-j+k-1} \choose {i-k}} \delta_{i+j-k}\delta_k(x). \]
  \end{enumerate}
\end{description}
\end{theorem}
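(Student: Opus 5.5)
We reduce every part of Theorem \ref{thm:DAlg-relations} to classical facts about $\scr E_\infty$-$H\FF_2$-algebras and about simplicial commutative $\FF_2$-algebras. The reduction goes through the symmetric monoidal equivalence $\Mod_{\MFtau}^\cell \wequi \Mod_{H\FF_2}^\Z$ followed by the grading-forgetting (shearing) functor $\Mod_{H\FF_2}^\Z \to \Mod_{H\FF_2}$ of Remark \ref{rmk:delta-indexing}. This functor is symmetric monoidal and preserves colimits. It therefore carries $\CAlg$ to $\CAlg$ and, by its compatibility with Raksit's construction, derived algebras to derived algebras. By the very choice of indexing, it sends the lower-indexed $Q_i$ and $\delta_i$ on a class of Chow degree $c$ to the classical Dyer--Lashof operation $Q^{\,\cdot}$, respectively the classical higher divided power operation $\delta_i$, on a class of topological degree $c$.

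Since all the relations are natural identities in $\pi_{**}$, it suffices to check them on the universal examples. Namely, we check them on free derived algebras $\LSym(x_c)$, and on $\LSym(x_c \oplus y_{c'})$ for the binary relations (additivity and Cartan). These universal objects are determined after shearing, because the relevant free objects live in a single weight, so no information is lost by forgetting the grading.

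After shearing, the statement becomes a list of known facts for $E \in \DAlg(\Mod_{H\FF_2})$. The additivity, Cartan and Adem relations (1) for the $Q_i$ are the classical ones of \cite[\S III]{MR836132}, rewritten in lower indexing. The conversion is immediate: in the $\DAlg$ setting $m$ plays no role, and in the shearing $Q_k$ on a degree-$c$ class is just $Q^{k+\dots}$ in the usual normalization, so the Nishida-type binomial $\binom{k-j-1}{2k-i}$ is the standard Adem coefficient after a change of variables. The relations for the $\delta_i$ (additivity, with the defect term $xy$ in the top case $\delta_c$; the Cartan formula; and the Adem relation (2)) are Dwyer's results \cite[Theorem 2.1]{MR574789} for homotopy of simplicial commutative $\FF_2$-algebras. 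These apply on the connective part. For a general derived algebra we reduce to the free connective case as follows: the free derived algebra on a generator of Chow degree $c \ge 0$ is the animated symmetric algebra, and the mixed $\delta$–$Q$ Cartan formula is tested on $\LSym(x_{c_x}) \otimes \LSym(y_{c_y})$.

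Vanishing and squaring (1) is the usual $Q^{|x|}(x)=x^2$ together with the vanishing of $Q^i$ below the degree. Part (2) is Remark \ref{rmk:Qi-vanish}. We justify it by the computation of free cosimplicial commutative algebras in \cite{MR342592} for $c \le 0$, and by the fact that free simplicial commutative algebras on positive-degree classes receive no $\scr E_\infty$-operations of positive lower index.

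The main obstacle is the Cartan relation (2) mixing $\delta$ and $Q$ when $c_y \le 0$. Here $y$ lies in the coconnective regime, so Dwyer's simplicial theory does not apply directly to the pair $(x,y)$. The plan is to use the polynomial extension property of $\LSym$ (Remark \ref{rmk:LSym-meaning}): write $\Sigma^{c_y}\FF_2$ for $c_y \le 0$ as a finite totalization of connective objects preserved by $\LSym_n$. This reduces the formula to the connective case, where $\delta_k(xy)$ is computed by Dwyer's Cartan formula and the $Q_j(y)$ for $c_y \le j \le 0$ arise as the cosimplicial Steenrod-type operations. We then match the terms by naturality against the universal example $\LSym(x_{c_x}) \otimes \LSym(y_{c_y})$.
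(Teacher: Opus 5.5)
Apart from one relation, your argument is the paper's. You push along $\DAlg(\Mod_{H\FF_2}^\Z)\to\DAlg(\Mod_{H\FF_2})$, then quote Bruner--May--McClure--Steinberger for the $Q_i$. For the $\delta_i$ you quote Dwyer; the paper additionally invokes the Goerss--Lada refinement. For $Q_i=0$, $i>0$, you use Remark \ref{rmk:Qi-vanish}.

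The gap is the mixed Cartan relation with $c_y<0$. This is exactly the part not covered by the classical simplicial or cosimplicial literature, and your sketch does not prove it. Writing $\LSym(x\oplus y)$ as a finite totalization of connective derived algebras is fine. However, the classes $xy$ and $\delta_k(xy)$ are not seen by any individual level, only through the Bousfield--Kan filtration. So applying Dwyer's formula levelwise can at best determine $\delta_k(xy)$ modulo higher filtration.

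Concretely, let $c_y=-m$. Then $y$ is detected in cosimplicial filtration $m$ by a degree-$0$ element $t$, and Dwyer gives $\delta_k(xt)=\delta_k(x)t^2$. This is only the leading term $\delta_k(x)Q_0(y)$. Every term $\delta_{k-j}(x)Q_j(y)$ with $j<0$ lies in filtration $m-j>m$. These are hidden extensions your reduction cannot see; already for $c_y=-1$ the term $\delta_{k+1}(x)y^2$ is invisible. Saying these terms ``arise as the cosimplicial Steenrod-type operations'' names the phenomenon without computing it. ``Matching terms by naturality against the universal example'' is circular, since $\LSym(x_{c_x})\otimes\LSym(y_{c_y})$ is precisely the example to be computed. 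Degree bookkeeping only yields $\delta_k(xy)=\sum a_{i,j}\delta_i(x)Q_j(y)$ with unknown $a_{i,j}\in\FF_2$.

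What is missing is a mechanism to pin down the $a_{i,j}$. The paper gets one by leaving derived algebras:
\begin{itemize}
\item The map $\pi_{**}\LSym(x\oplus y)\to\pi_{**}\NSym(x\oplus y)$ is injective by Theorem \ref{thm:NAlg-pres}(3), so it suffices to prove the relation for normed $\MFtau$-algebras.
\item There $\NSym(x)=\NSym_{\MF}(\tilde x)\otimes_{\MF}\MFtau$. The boundary map of the reduction-mod-$\tau$ sequence is computed from the explicit splitting of $D_2(S^{a,w})$ in $\Mod_{\MF}$ (Corollary \ref{cor:partial-formula}): it satisfies $\partial\delta_j(x)=\tau^{m(j-1)-1}Q_{j-1}(\tilde x)$ and $\partial Q_i(x)=0$.
\item Applying $\partial$ to the ansatz turns it into an instance of the integral Cartan formula for the $Q_i$ on normed $\MF$-algebras. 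That formula is proved via $\tau$-torsion-freeness and Betti realization, and it forces every $a_{i,j}=1$.
\end{itemize}
Your purely derived-algebraic route lacks an input of this kind, namely realizing the $\delta_j$ as Bockstein partners of operations whose Cartan formula is already known. Without it, or a genuine second-quadrant computation in the style of Dwyer's work on higher divided squares, the mixed Cartan relation remains unproved.
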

\begin{proof}
The forgetful functor $\DAlg(\Mod_{H\FF_2}^\Z) \to \DAlg(\Mod_{H\FF_2})$ commutes with power operations by construction, and so we may reference results about ungraded derived algebras.
Using the relationship between derived algebras and (co)simplicial commutative rings (see \S\ref{subsec:derived-alg-recollections}), most of the relations follow from \cite{MR574789} (with the improvement in \cite{MR1260166}) for the $\delta$-operations and \cite[\S III.1]{MR836132} for the $\scr E_\infty$-operations.
The vanishing of $Q_i$ with $i>0$ was already observed in Remark \ref{rmk:Qi-vanish}.
The only remaining issue is the mixed Cartan relation for the $\delta$-operations.
Since the map from the free derived algebra to the free normed algebra is injective on homotopy groups according to Theorem \ref{thm:NAlg-pres}, the mixed Cartan relation will follow from Theorem \ref{thm:NAlg-ops}.\footnote{The careful reader can check that our reasoning is not circular, \ie we prove the mixed Cartan relation for normed spectra without reference to the one for derived algebras.}
\end{proof}

\begin{example}
It follows from the vanishing relations that if $x$ has positive Chow degree, then $Q_i(x)=0$ for any $i \in \Z$.
\end{example}

\begin{remark}
One might wonder why in Theorem \ref{thm:DAlg-relations} we do not have any ``mixed Adem relations'' involving $\delta_i Q_j$ or $Q_j \delta_i$.
The reason is that these are all either undefined or zero.
Indeed for $\delta_i$ to be defined, we must apply it to a class $x$ in positive Chow degree.
Then $\delta_i(x)$ also has positive Chow degree, and so $Q_j \delta_i(x) = 0$.
Similarly in order for $\delta_i Q_j(x)$ to be defined, $Q_j(x)$ must have positive Chow degree, which is only possible if $j>0$ or $x$ has positive Chow degree, and in either case $Q_j(x)=0$.
\end{remark}

\begin{theorem} \label{thm:DAlg-pres}
Let $(p,q) \in \Z^2$.
\begin{enumerate}
\item $\pi_{**}\LSym(x_{p,q})$ is freely generated as an $\FF_2$-algebra by the tautological class $x_{p,q}$ under the above power operations, subject only to the above relations.
\item If $c(p,q) \le 0$ then $\pi_{**}\LSym(x_{p,q})$ is a polynomial ring on generators of the form $Q_I x$, where $I$ runs through op-admissible, $(p,q)$-allowable sequences of nonpositive integers.
\item If $c(p,q) > 0$ then $\pi_{**}\LSym(x_{p,q})$ is an exterior algebra on generators of the form $\delta_J x$, where $J$ must be admissible and $\delta_J x$ must be defined.
\end{enumerate}
\end{theorem}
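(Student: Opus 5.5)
We reduce to the classical computations of homotopy of free simplicial and cosimplicial commutative $\FF_2$-algebras, using the equivalence $\Mod_{\MFtau}^\cell \wequi \Mod_{H\FF_2}^\Z$ and the identification of connective and coconnective derived algebras from \S\ref{subsec:derived-alg-recollections}. The first step is to place $\LSym(x_{p,q})$ in one of these two regimes. Under the equivalence, $\Sigma^{p,q}\MFtau$ corresponds to $H\FF_2$ placed in grading $q$ and homological degree $c=c(p,q)$. It therefore lies in the heart shifted by $c$. If $c \ge 0$ it is connective, and if $c \le 0$ it is coconnective. The monad $\LSym$ preserves sifted colimits and $(\Mod_{\MFtau}^\cell)_{c\le 0}$ (Lemma \ref{lemm:alg-monad-LS-S}), and it also preserves connective objects. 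Hence $\LSym(x_{p,q})$ is the free animated, respectively cosimplicial, commutative graded $\FF_2$-algebra on one generator of weight $q$. Because the grading only records weight, we may forget it, computing ungraded and then reinserting the weights, which are forced by the homogeneity of the operations.

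For (3), with $c>0$, the free simplicial commutative $\FF_2$-algebra on a generator of degree $c$ is the symmetric algebra on $K(\FF_2,c)$. Its homotopy was computed by Bousfield and Dwyer (\cite{MR574789}, with the correction in \cite{MR1260166}). The result is an exterior algebra, since squares vanish in positive degree by divided powers $\gamma_2$, on generators $\delta_J x$ with $J$ admissible and all $\delta_i$ defined ($2\le i\le$ current degree). We then translate the indexing via Remark \ref{rmk:delta-indexing}. Since the generator has pure weight and each $\delta$ doubles weight, the bigrading is immediate. For (2), with $c\le 0$, the free cosimplicial commutative algebra on a generator in degree $c$ was computed by Priddy \cite{MR342592}. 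It is a polynomial algebra on $Q^{\tilde I}x$, with $\tilde I$ admissible in the upper-index sense and excess constrained so that no operation is a square or vanishes, using $Q^i=0$ for $i<p$ or $i>2q$ as in Remark \ref{rmk:Qi-vanish}. Lemma \ref{lemm:topological-allowability} converts this exactly into op-admissible, $(p,q)$-allowable sequences $I$. Its excess clause covers the lower vanishing bound, and the upper bound $i>2q$ corresponds to $I$ consisting of nonpositive integers.

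For (1), we use the explicit bases from (2) and (3) and compare them with the algebra generated freely by $x$ under the operations of Theorem \ref{thm:DAlg-relations} modulo its relations. Using the Adem relations to rewrite any word into admissible form, the vanishing relations to kill non-allowable words, additivity and Cartan relations to reduce to monomials, and squaring relations to identify $Q_c$ with squares, we get that the algebraically presented object is spanned by the same monomials. Hence the map to $\pi_{**}\LSym(x_{p,q})$ is surjective and carries a spanning set to a basis, so it is an isomorphism.

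The main obstacle I expect is the bookkeeping in the cosimplicial case. Here we must check that Priddy's excess and admissibility conditions, stated cohomologically in upper indexing, match our lower-index op-admissible/allowable conditions including the boundary cases, and that the relations of Theorem \ref{thm:DAlg-relations} suffice to reduce everything to that basis. A subtle point is that the mixed $\delta$-Cartan relation in Theorem \ref{thm:DAlg-relations} was deduced from the normed case. For (1) we only need it for monomials in a single free generator, where the relation is not used in the normal-form reduction. So we avoid circularity by deducing (1) only from the remaining relations.
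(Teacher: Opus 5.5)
Your proposal is correct and follows the paper's route: use the identification of connective and coconnective derived algebras with simplicial and cosimplicial commutative $\FF_2$-algebras, invoke Priddy \cite{MR342592} for $c(p,q)\le 0$ and the classical simplicial computation for $c(p,q)>0$, and translate the indexing via the excess and allowability lemmas. The paper cites Goerss's Theorem B \cite{MR1089001} for the simplicial case rather than Dwyer, and it treats (1) as contained in those references rather than spelling out your spanning-set-to-basis argument and circularity check, but these are only differences in bookkeeping.
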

\begin{proof}
Again using the relationship between derived algebras and (co)simplicial commutative rings (see \S\ref{subsec:derived-alg-recollections}), as well as the reformulation of our allowability and definedness conditions in terms of excess (see Lemma \ref{lemm:allowable-excess}), this is immediate from \cite[Theorem 4.1.1]{MR342592} (for $c(p,q) \le 0$) and \cite[Theorem B]{MR1089001} (for $c(p,q) \ge 0$).
\end{proof}

\subsection{Normed $\MF$-algebras}
We now establish partial, integral versions of our results.
Here ``integral'' refers to not working modulo $\tau$.
Thus let $E \in \NAlg(\Mod_{\MF})$.
Just as before, they carry topological operations $Q^i_t$ and Voevodsky operations $Q^i_V$, which we amalgamate into the combined operation $Q^i$, and re-index as $Q_i = Q^{i+2w}$

\begin{theorem} \label{thm:NAlg-integral-ops}
Let $E \in \NAlg(\Mod_{\MF})$.
Then the power operations satisfy the following relations.
\begin{description}
\item[Additivity] The operations $Q_i$ are additive.
\item[Vanishing and squaring]
  If $x$ has Chow degree $c \le 0$, then $Q_c(x) = x^2$ and $Q_i(x) = 0$ for $i < c$.
\item[Action on $\tau$]
  We have $Q_2(\tau) = \tau$ and $Q_i(\tau) = 0$ for all $i\ne 2$.
\item[Cartan relations]
  $Q_k(xy) = \sum_{i+j=k}\tau^{m(i)+m(j)-m(i+j)}Q_i(x)Q_j(y)$.
\item[Adem relations]
  If $i > 2j$ then \[ Q^iQ^j(x) = \sum_k {{k-j-1} \choose {2k-i}} \tau^? Q^{i+j-k}Q^k(x). \]
\end{description}
\end{theorem}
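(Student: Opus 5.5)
The approach is to reduce each relation to the universal case of free normed $\MF$-algebras on spheres, and then compare with two known structures. The first is the Voevodsky operations on normed $\MF$-algebras, as in \cite{arXiv:2210.07150} and \cite{bachmann-wilson}. The second is the topological $\scr E_\infty$-operations, via the forgetful functor to $\CAlg(\Mod_{\MF})$ and Betti realization.

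Since every operation $Q_i$ is natural and represented by a class in $\pi_{**}\NSym_{\MF}(x_{p,q})$ (respectively $\NSym_{\MF}(x_{p,q}) \otimes \NSym_{\MF}(y_{p',q'})$ for binary relations), it suffices to verify each relation on these universal classes. The plan is to show that the relevant homotopy groups are $\tau$-torsion free in the bidegrees in question. Then each identity can be checked after inverting $\tau$, and there Betti realization (using $\SH(\CC)_2^\comp[\tau^{-1}] \wequi \Sp_2^\comp$, and rigidity to pass from $\CC$ to a general $\bar k$) sends $Q^i_V$ and $Q^i_t$ both to the classical $Q^i$. The torsion-freeness should follow from the known computation of motivic extended squares $D_2(S^{p,q}) \otimes \MF$ as a sum of Tate motives $\Sigma^{*,*}\MF$, i.e.\ free over $\FF_2[\tau]$, as in \cite{arXiv:2104.01057} and \cite{arXiv:2210.07137}.

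With this reduction in place, the individual relations are handled as follows.
\begin{itemize}
\item \textbf{Additivity.} This is standard, since the cross term $xy+yx$ of $D_2(x+y)$ vanishes modulo $2$. For $Q^i_t$ it is classical, and for $Q^i_V$ it is \cite{arXiv:2210.07150}.
\item \textbf{Vanishing and squaring.} For $c \le 0$ the operation $Q_c = Q^{p}$ is topological because $p \le 2q$, so this is classical May theory: $Q^p(x)=x^2$ and $Q^i(x)=0$ for $i<p$.
\item \textbf{Action on $\tau$.} Here $\tau \in \pi_{0,-1}$, so $Q_2(\tau) = Q^0_V(\tau)$, which is Voevodsky's computation $P^0$-type identity $Q^0(\tau)=\tau$ in weight $-1$. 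The remaining $Q_i(\tau)$ vanish for degree reasons together with $\pi_{**}\MF=\FF_2[\tau]$: for $i<0$ they are topological on a class of Chow degree $2$ and vanish, and for $i>2$ they land in bidegrees with $\pi_{**}\MF = 0$.
\item \textbf{Cartan relations.} The external Cartan formula comes from the map $D_2(X\otimes Y) \to D_2(X)\otimes D_2(Y)$ on the level of extended squares. The power of $\tau$ is then forced by comparing weights, as in \cite[Example 6.7]{bachmann-wilson}, given that $\tau$-inverted the formula is classical.
\item \textbf{Adem relations.} These arise from $D_4 \leftarrow D_2 D_2$ and the $\Sigma_4$-symmetry, and the $\tau$-exponents are again fixed by weight bookkeeping. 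Reducing to $\tau$-inverted via torsion-freeness of $\pi_{**}\NSym_{\MF}$ in the relevant bidegree, they become the classical Adem relations.
\end{itemize}

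The main obstacle is the torsion-freeness input needed for the Adem relations: one needs the iterated extended power $D_2D_2(S^{p,q})\otimes\MF$ to be a sum of Tate twists of $\MF$. I would try to deduce this from the cellularity and Tate-ness of $D_2(S^{2n,n}\otimes \MF)$ together with the fact that $\MF$-modules of the form $\bigoplus \Sigma^{a,b}\MF$ have $D_2$ again of this form. If that fails, I would instead prove the Adem relations directly as identities among Voevodsky's operations, invoking Voevodsky's motivic Adem relations for the $Q^i_V$ and classical ones for the $Q^i_t$.
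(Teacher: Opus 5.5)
The parts on additivity, vanishing/squaring and the action on $\tau$ agree with the paper. For classes of Chow degree $\le 0$, your plan for the Cartan and Adem relations is also the paper's: the free normed $\MF$-algebra on such a generator is $\tau$-torsion free \cite[Proposition 3.3]{arXiv:2210.07137}, so one matches weights and compares after inverting $\tau$. (For Adem, the object that must be torsion free is $D_4$, or all of $\NSym_{\MF}(x)$, not $D_2D_2$; the Adem relations are not identities in $D_2D_2$.)

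There is, however, a genuine gap. Your key claim that ``the relevant homotopy groups are $\tau$-torsion free'' is false in general, and the theorem is stated for classes of arbitrary Chow degree. By Proposition \ref{lemma:D2sphere}, if $c=a-2w\ge 2$ then $D_2(S^{a,w})$ in $\Mod_{\MF}$ has summands $S^{a+k,w+\lceil k/2\rceil}/\tau^{\lceil k/2\rceil-w}$ for $2w<k<a$. By Remark \ref{rmk:D2-basis-int}, these are generated exactly by the classes $Q_i(x)$ with $0<i<c$. Such classes die after inverting $\tau$. Yet for $x$ of positive Chow degree they appear in the Cartan formula with $\tau$-exponent $0$, e.g.\ the term $Q_i(x)Q_0(y)$ in $Q_i(xy)$. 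Neither the $\tau$-inverted comparison nor weight bookkeeping can determine their coefficients. Your fallback via Voevodsky's Adem relations does not touch the Cartan relations at all.

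The missing idea is to reduce to nonpositive Chow degree using $S^1$-suspension.

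For the Adem relations this is immediate. All $Q^i$ are $S^1$-stable, so it suffices to verify them for $x$ of very negative Chow degree.

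For the Cartan relations, suspension kills products, so one uses instead that $\sigma\colon\pi_{*-1,*'}A\to\pi_{*,*'}(S^1\tensor{\MF}A)$ has three properties:
\begin{itemize}
\item it is a derivation (the topological Hopf map acts by zero on $\MF$-algebras);
\item it commutes with the $Q_i$;
\item it preserves weight.
\end{itemize}
Work in $S^1\tensor{\MF}\NSym(x,y)\wequi\NSym(x,y,\sigma x,\sigma y)$, and compose with the normed algebra map $\alpha$ to $\NSym(x,\sigma y)$ that kills $y$ and $\sigma x$. Then $\alpha\sigma Q_k(xy)=Q_k(\alpha(x\,\sigma y+\sigma(x)\,y))=Q_k(x\,\sigma y)$. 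On the other side, $\alpha\sigma$ applied to the right-hand side gives $\sum_{i+j=k}\tau^{m(i)+m(j)-m(i+j)}Q_i(x)Q_j(\sigma y)$, because $\alpha Q_i(\sigma x)=0$. Hence the Cartan relation for $(x,y)$ implies it for $(x,\sigma y)$. Inducting upward from very negative Chow degrees in each variable gives it in general.
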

\begin{remark}
Note that we have formulated the Adem relations in terms of the upper indices $Q^i$.
By $\tau^?$ we mean the unique (nonnegative) power of $\tau$ needed to make both sides have the same weight (it depends on the weight of $x$).
Part of the assertion is that this exists (\ie no negative exponents are needed).
\end{remark}
\begin{proof}
Additivity is clear for $Q_i$ with $i \le 0$, since this is just a topological operation.
For additivity of the Voevodsky operations see \cite[Proposition 5.13]{arXiv:2210.07150}.
Vanishing and squaring again only refers to topological operations, so see \cite[\S III.1]{MR836132}.
The action on $\tau$ is forced by degree reasons, except for $Q_2$.
But $Q_2(\tau) = Q^0(\tau) = Sq^0(\tau) = \tau$, since $Sq^0$ acts by the identity on the motivic cohomology of any motivic space \cite[Theorem 9.5]{zbMATH02111676}.

It remains to deal with the Cartan and Adem relations.
It is proved in \cite[Proposition 3.3]{arXiv:2210.07137} that the homotopy groups of the free normed $\MF$-algebra on a generator in Chow degree $\le 0$ are $\tau$-torsion free.
It follows that, in order to verify the Cartan and Adem relations for $x,y$ of Chow degree $\le 0$, we need only check that the purported expression have the same weight, and agree after inverting $\tau$.
Since the latter operation corresponds to complex/étale realization\NB{ref?}, this reduces the claims to topology, for which see \cite[\S III.1]{MR836132}.
This is straightforward for the Cartan relations and for the Adem relations, this was verified in \cite[Proposition 6.30]{bachmann-wilson}.\NB{update ref eventually}

We now need to contend with situations where $x,y$ have possibly positive Chow degree.
Since the $Q_i$ operations are $S^1$-stable, it suffices to verify the Adem relations in sufficiently negative degree, which we have done.
The Cartan relations can also be suspended to conclude them in arbitrary degree.
For the convenience of the reader, we outline one way of doing this.
Given $A \in \NAlg(\Mod_{\MF})$ we have the $S^1$-suspension map $\sigma: \pi_{*-1,*'}A \to \pi_{**'}(S^1 \tensor{\MF} A)$ which is a derivation (the topological Hopf map acting by zero on any $\MF$-algebra) and commutes with the stable operations $Q_i$ (see \S\ref{subsec:Gm-susp} for an analogous discussion).
The Cartan relation for $Q_k(xy)$ is the equality of two elements in $\pi_{**}\NSym(x,y)$, namely $Q_k(xy)$ and a sum $R$ of expressions of the form $\tau^? Q_i(x)Q_j(y)$.
Consider the map $\alpha: S^1 \tensor{\MF} \NSym(x,y) \wequi \NSym(x,y,\sigma x, \sigma y) \to \NSym(x,\sigma y)$ annihilating $y$ and $\sigma x$.
We compute \[ \alpha \sigma Q_k(xy) = \alpha Q_k(\sigma(xy)) = Q_k \alpha (x\sigma y + \sigma(x) y) = Q_k(x\sigma y), \] where we have used that $\sigma$ is a derivation and commutes with $Q_k$, and $\alpha$ is a ring map commuting with $Q_k$.
Analyzing similarly $\alpha\sigma(R)$ we see that the Cartan relation for $x,y$ implies the Cartan relation for $x, \sigma y$.
Inductively, this implies the Cartan relation in all Chow degrees.
\end{proof}

\begin{theorem} \label{thm:NAlg-integral-pres}
Let $(p,q) \in \Z^2$ with $c(p,q) \le 0$.
\begin{enumerate}
\item $\pi_{**}\NSym_{\MF}(x_{p,q})$ is freely generated as an $\FF_2[\tau]$-algebra by the tautological class $x_{p,q}$ under the above power operations, subject only to the above relations.
\item $\pi_{**}\NSym_{\MF}(x_{p,q})$ is a polynomial ring (over $\FF_2[\tau]$) on generators of the form $Q_I x$.
  Here $I$ must be op-admissible and $(p,q)$-allowable.
\end{enumerate}
\end{theorem}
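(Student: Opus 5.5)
The plan is to deduce (1) from (2). By Theorem \ref{thm:NAlg-integral-ops}, every iterated operation applied to $x=x_{p,q}$ can be rewritten, using the Adem relations, as an $\FF_2[\tau]$-combination of composites $Q_I x$ with $I$ op-admissible. The vanishing/squaring relations then remove the non-allowable ones: they are either zero or squares of lower generators. The Cartan relations reduce operations on products to operations on generators. So the relations imply that the algebra is generated by the $Q_I x$ with $I$ op-admissible and $(p,q)$-allowable, and (1) follows once we know these elements are polynomially independent over $\FF_2[\tau]$, which is (2).

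For (2) I would first recall the input already used in the proof of Theorem \ref{thm:NAlg-integral-ops}: by \cite[Proposition 3.3]{arXiv:2210.07137}, $\NSym_{\MF}(x_{p,q})$ has $\tau$-torsion-free homotopy and is cellular for $c(p,q)\le 0$. More precisely, as an $\MF$-module it is a sum of $\Sigma^{a,b}\MF$'s, so $\pi_{**}$ is a free $\FF_2[\tau]$-module. Consider the $\FF_2[\tau]$-algebra map
\[ \FF_2[\tau][Q_Ix] \to \pi_{**}\NSym_{\MF}(x_{p,q}). \]

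\textbf{Injectivity.} By $\tau$-torsion freeness, both sides inject into their $\tau$-localizations. After inverting $\tau$ and $2$-completing, Betti realization identifies $\NSym_{\MF}(x_{p,q})[\tau^{-1}]$ with the free $\scr E_\infty$-$H\FF_2$-algebra on a class in degree $p$, with $\tau$ as a periodicity. By Lemma \ref{lemm:topological-allowability}, the images of $Q_Ix$ are exactly the $Q^{\tilde I}x$ with $\tilde I$ admissible of excess $>p$. By the classical computation \cite[\S III]{MR836132}, these form a polynomial basis. So the map becomes an isomorphism after inverting $\tau$, and is injective before.

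\textbf{Surjectivity.} This is the main obstacle: we must show the cokernel, a priori a $\tau$-divisible or $\tau$-torsion phenomenon, vanishes. Since both sides are free $\FF_2[\tau]$-modules that agree after inverting $\tau$, it suffices to compare ranks bidegree by bidegree, i.e.\ to check that the map is an isomorphism modulo $\tau$, or equivalently that each $Q_Ix$ sits in the \emph{highest possible weight}. I would try two routes. The first is to use the explicit cell structure of motivic extended powers $D_n(S^{p,q})$ from \cite{arXiv:2104.01057} and \cite{arXiv:2210.07137} to count the $\MF$-cells in each bidegree. These should match the monomials in the $Q_Ix$, since the Voevodsky operations $Q_i$, $i>0$, account exactly for the cells of higher weight than their topological counterparts. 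The second is a weight argument: a class of weight $w$ whose $\tau$-localization is $\tau^{-k}$ times a monomial must, by torsion freeness and the rank count, be that monomial times $\tau^{-k}$ with $k\le 0$. The point is that the weight of each $Q_ix$ (namely $2w+m(i)$) is maximal among classes realizing to $Q^{\tilde i}x$. I expect this weight bookkeeping via the cell decomposition of $D_2$ and its iterates to be the real work, and I would do it first for $D_2$ and then induct using the monad structure.
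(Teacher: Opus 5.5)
Your reduction of (1) to (2) is the paper's argument. Your injectivity argument for (2) is also sound: both sides are $\tau$-torsion free, and after inverting $\tau$ each $Q_Ix$ realizes to a unit multiple of $Q^{\tilde I}x$, so Lemma~\ref{lemm:topological-allowability} and the classical computation give an isomorphism. The paper, however, does not prove (2) internally; after translating via Lemma~\ref{lemm:topological-allowability} it cites \cite[Theorem 6.36]{bachmann-wilson}. Your proposal therefore has to supply that content itself, and its surjectivity half is missing.

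Surjectivity amounts to showing that no nonzero combination of admissible monomials is $\tau$-divisible in $\pi_{**}\NSym_{\MF}(x_{p,q})$. Equivalently, the $\MF$-cells of $D_n(S^{p,q})$ must have exactly the weights of the monomials of power $n$. Neither of your routes establishes this.

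\emph{Route two is circular.} The claim that $2w+m(i)$ is maximal among classes realizing to $Q^{\tilde i}x$ is the statement to be proved. The general information you have (cellularity, $\tau$-torsion freeness, Chow degree $\le 0$ of the cells) only bounds weights from below, whereas you need an upper bound.

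\emph{Route one lacks its input.} It needs the cell weights of $D_n(S^{p,q})$ for all $n$, but explicitly only $n=2$ is available (Proposition~\ref{lemma:D2sphere}). Inducting via the monad structure would require maps such as $D_2\circ D_2\to D_4$ and $D_i\otimes D_j\to D_{i+j}$ to be jointly surjective on $\pi_{**}$ \emph{before} inverting $\tau$. Topology gives this only after inverting $\tau$, which is exactly the information you already spent on injectivity.

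Closing the gap needs a further idea. For instance, a transfer argument could exhibit $\MF\otimes D_n(S^{p,q})$ as a retract of the analogous object for a Sylow $2$-subgroup of $\Sigma_n$. This would use that over $\bar k$ the transfer followed by projection is multiplication by the odd index on $\MF$-homology. The Sylow object is built from iterated $D_2$'s and tensor products, which stay in Chow degree $\le 0$ and are computed by Proposition~\ref{lemma:D2sphere}. You would then rewrite the resulting non-admissible composites using the integral Adem relations of Theorem~\ref{thm:NAlg-integral-ops}, which involve only nonnegative powers of $\tau$. As written, (2) is not established: either cite \cite[Theorem 6.36]{bachmann-wilson} as the paper does, or supply such an argument.

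Two smaller points:
\begin{enumerate}
\item Ranks must be compared in the grading by $n$, since single bidegrees can be infinite-dimensional: all iterates $(Q_0)^k x_{-1,0}$ lie in $\pi_{-1,0}$.
\item What you need modulo $\tau$ is linear independence of the monomials, not merely that each $Q_Ix$ individually has maximal weight.
\end{enumerate}
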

\begin{proof}
Using Lemma \ref{lemm:topological-allowability} to translate our op-admissibility and allowability conditions into the upper index notation, part (2) is proved in \cite[Theorem 6.36]{bachmann-wilson}.\NB{update ref eventually}
For part (1) we must show that, using the above relations, any expression involving iterated ring and power operations can be brought into the form stated in (2).
This is straightforward.
\end{proof}

\subsection{Easy proofs} \label{subsec:easy-proofs}
We will now prove most of Theorems \ref{thm:NAlg-ops} and \ref{thm:NAlg-pres}.

We begin with Theorem \ref{thm:NAlg-pres}(2,3).
For $c(p,q) \le 0$, Theorem \ref{thm:NAlg-pres}(2) follows from Theorem \ref{thm:NAlg-integral-pres} by reduction modulo $\tau$.
This implies, still for $c(p,q) \le 0$, Theorem \ref{thm:NAlg-pres}(3).
Let us write $B$ for the bar construction in various categories of highly structured algebras, \ie $BA = \1 \otimes_A \1$.
Since the forgetful functor $\NAlg(\Mod_{\MFtau}) \to \DAlg(\Mod_{\MFtau}^\cell)$ preserves colimits it commutes with bar constructions.
Now $B\NSym(x_{p,q}) \wequi \NSym(x_{p+1,q})$ and similarly for $\LSym$ in place of $\NSym$.
Thus Theorem \ref{thm:NAlg-pres}(3) for $(p,q)$ implies the same for $(p+1,q)$.
Next we observe that Theorem \ref{thm:NAlg-pres}(3) for some $(p,q)$ implies \ref{thm:NAlg-pres}(2) for the same $(p,q)$, via Theorem \ref{thm:DAlg-pres}(2).
We have thus proved (by induction) that Theorem \ref{thm:NAlg-pres}(2,3) holds for all $(p,q)$.

We now want to prove Theorem \ref{thm:NAlg-ops}; together with Theorem \ref{thm:NAlg-pres}(2) this also implies Theorem \ref{thm:NAlg-pres}(1).
Recall that we have already established Theorem \ref{thm:NAlg-integral-ops} (some relations for operations of normed $\MF$-algebras) and most of Theorem \ref{thm:DAlg-relations} (all relations in derived algebras except for the mixed Cartan relation).
Since a normed $\MFtau$-algebra is both a derived algebra and a normed $\MF$-algebra, all of these relations apply to any $\MFtau$-algebra as well.
This means we have proved all of Theorem \ref{thm:NAlg-ops} (up to some easy translations), except for the following:
\begin{itemize}
\item The mixed (second) Cartan relation determining $\delta_i(xy)$ for $x$ of positive and $y$ of negative Chow degree. (Since we did not prove this for derived algebras, yet.)
\item The mixed (third) Adem relation determining $Q_i \delta_j$ for $i > 0$.
\end{itemize}
We will establish these in \S\ref{subsub:mixed-cartan} and \S\ref{sub:mixed-adem}, respectively.
This then also implies the mixed Cartan relation for derived algebras, concluding the proofs.

Let us end this subsection by illustrating the translation between Theorem \ref{thm:NAlg-integral-ops} and Theorem \ref{thm:NAlg-ops}, by establishing the Adem relations.
Thus let $i,j \in \Z$ and $x \in \pi_{*,w}E$.
Since $Q_j(x)$ has weight $2w+m(j)$ we find \[ Q_iQ_j(x) = Q^{i+2(2w+m(j))}Q^{j+2w}(x) = Q^{i+4w+2m(j)}Q^{j+2w}(x). \]
Then there is an Adem relation (coming from Theorem \ref{thm:NAlg-integral-ops}) if and only if $i+4w+2m(j) > 2(j+2w)$, \ie $i > 2(j-m(j))$, as stated.
It takes the form \[ Q^{i+4w+2m(j)}Q^{j+2w}(x) = \sum_k {{k-j-2w-1} \choose {2k-i-4w-2m(j)}} \tau^? Q^{i+j+6w+2m(j)-k}Q^k(x). \]
Performing the substitution $k \mapsto k+2w$ we obtain \[ Q^{i+4w+2m(j)}Q^{j+2w}(x) = \sum_k {{k-j-1} \choose {2k-i-2m(j)}} \tau^? Q^{i+j+4w+2m(j)-k}Q^{k+2w}(x), \] which we can translate back into lower indices as \[ Q_iQ_j(x) = \sum_k {{k-j-1} \choose {2k-i-2m(j)}} \tau^? Q_{i+j-k+2m(j)-2m(k)} Q_k(x). \]

\subsection{Mixed Cartan relations} \label{subsub:mixed-cartan}
We begin by extending the analysis of motivic extended powers of spheres in $\Mod_{\MF}$ (not $\Mod_{\MFtau}$!) started in \cite[Lemma 3.1(2)]{arXiv:2210.07137}.

\begin{proposition}
  \label{lemma:D2sphere}
  Let $a$ and $w$ be integers and $c=a-2w$.
There is a decomposition in $\Mod_{\MF}$
\[
D_2 (S^{a,w})
\simeq
\begin{cases}
 \left(
\bigoplus_{a\leq k<2w} S^{a+k,2w}
\right)
\;\oplus\;
\left(
\bigoplus_{2w\leq k} S^{a+k,w+\lceil k/2\rceil}
\right) &\text{if }a\leq 2w\\
\left(
\bigoplus_{2w<k<a} S^{a+k,w+\lceil k/2\rceil} / 
\tau^{\lceil k/2\rceil - w}
\right)
\;\oplus\;
\left(
\bigoplus_{a\leq k} S^{a+k,w+\lceil k/2\rceil}
\right)  &\text{if }2w\leq a.
\end{cases}
\]
\end{proposition}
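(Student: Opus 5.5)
We reduce to the case $w$ arbitrary with $a$ varying, by exploiting two kinds of periodicity. First, $D_2$ of an $\MF$-module sphere is $D_2(\Sigma^{a,w}\MF)\wequi \MF\otimes D_2^{mot}(S^{a,w})$, and the $\P^1$-periodicity of motivic extended powers (via the Thom isomorphism for the bundle $\scr O(1)^{\oplus}$-twisted over $B_\et\Sigma_2$, as in \cite{arXiv:2104.01057}) gives $D_2(S^{a+2,w+1})\wequi \Sigma^{4,2}\cdot(\text{Thom twist})\,D_2(S^{a,w})$ after tensoring with $\MF$, since $\MF$ is oriented and the relevant vector bundle is $\MF$-orientable. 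Checking the formula, the right hand side indeed shifts by $(4,2)$ with $k\mapsto k+2$ when $(a,w)\mapsto(a+2,w+1)$ (note $c$ is preserved and $\ceil{(k+2)/2}=\ceil{k/2}+1$). So it suffices to treat $w=0$, i.e.\ $D_2(S^{a,0})=D_2(S^a)$ for $a\in\Z$, where the two cases are $a\le 0$ and $a\ge 0$.

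For $a\le 0$ (Chow degree $\le 0$), the claim is \cite[Lemma 3.1(2)]{arXiv:2210.07137} (the cellularity/$\tau$-torsion freeness input quoted in Theorem \ref{thm:NAlg-integral-ops}), possibly after the periodicity above: $D_2(S^{a})$ is a sum of Tate motives whose bidegrees are read off from the cells $Q^k_t$ for $a\le k<0$ (weight $0=2w$) and $Q^k_V$ for $k\ge 0$ (weight $\ceil{k/2}$). The identification of the cells can be confirmed by $\tau$-inverting (Betti realization gives $D_2^{top}(S^a)$ with one $\FF_2$ in each degree $a+k$, $k\ge a$) and by the weights forced by $\tau$-torsion freeness, which is the previous paragraph's content.

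For $a> 0$ we induct on $a$ using the $S^1$-suspension. There is a cofibre sequence relating $\Sigma^{1,0}D_2(S^{a-1})$ and $D_2(S^{a})$: the standard filtration of $D_2(\Sigma X)$ gives $\Sigma D_2(X)\to D_2(\Sigma X)\to \Sigma^{?}(X^{\otimes 2})$-type cofibre; more concretely, stably $D_2(S^{a})\wequi \Sigma^{a,0}(B_\et\Sigma_2)^{a\gamma}$-Thom spectrum with $\gamma$ the sign representation, and increasing $a$ by one yields the cofibre sequence $\Sigma^{a,0}D_2(S^0)$-cells: $\Sigma D_2(S^{a-1}) \to D_2(S^a)\to S^{2a,0}\otimes(\text{bottom cell})$ read off from the Thom class. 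I would instead use the explicit model: $\MF\otimes (B_\et\Sigma_2)^{a\gamma}$ is the stunted motivic $\R P^\infty_a$, whose $\MF$-homology is known from $\MF^{**}B\mu_2=\FF_2[\tau][u,v]/(u^2=\tau v)$ ($|u|=(1,1),|v|=(2,1)$). Thom spectra of $-a\gamma$ or $a\gamma$ over $B\mu_2$ are computed as cofibres of multiplication by the Euler class $v$ (for even twists) or by $u$ (for odd), so $\MF\otimes\Sigma^{-a}D_2(S^a)$ is the cofibre/stunted piece obtained by removing the cells below $a$ of the cellular decomposition of $\MF\otimes B\mu_{2+}$ with cells in bidegrees $(k,\ceil{k/2})$. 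In the ungraded-weight Thom-twisted setting the cells $S^{a+k,\ceil{k/2}}$ with $k<a$ are killed in topology but, motivically, attaching maps are multiples of $\tau^{\ceil{k/2}}$ (because $u^2=\tau v$), leaving the summands $S^{a+k,\ceil{k/2}}/\tau^{\ceil{k/2}}$ for $0<k<a$ and free cells for $k\ge a$, matching the formula with $w=0$.

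The main obstacle is the splitting: showing that the resulting $\MF$-module, which a priori is an iterated extension of these pieces, decomposes as a direct sum. I would argue via the Chow $t$-structure/weight considerations: the $\MF$-homotopy $\pi_{**}$ computed from $H^{**}(B\mu_2)$ with a shifted Thom class is a direct sum of cyclic $\FF_2[\tau]$-modules $\FF_2[\tau]$ and $\FF_2[\tau]/\tau^n$ in the stated bidegrees, and cellular $\MF$-modules whose homotopy is such a sum split accordingly because $\Ext^{s}_{\FF_2[\tau]}$ between these generators in the relevant bidegrees vanishes for $s\ge 2$ ($\FF_2[\tau]$ has global dimension 1), so the module is determined by its homotopy. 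Verifying the precise $\tau$-exponent $\ceil{k/2}-w$ requires tracking $u^2=\tau v$ carefully, which I expect to be the delicate bookkeeping step.
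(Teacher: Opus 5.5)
Your reduction to $w=0$ via the Thom isomorphism $D_2(\Sigma^{2,1}X)\wequi\Sigma^{4,2}D_2(X)$ in $\Mod_{\MF}$, and your appeal to \cite[Lemma 3.1(2)]{arXiv:2210.07137} for $a\le 0$, match the paper. The gap is the case $a>0$: the model you switch to there does not exist.

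With the swap action, $S^{a,0}\otimes S^{a,0}$ is $S^{a}\wedge S^{a\sigma}$, where $S^{a\sigma}$ is the \emph{simplicial} sign sphere. That is not the Thom space of an algebraic vector bundle over $B_\et\Sigma_2$. So there is no Thom isomorphism and no Gysin sequence, and $u$, $v$ are not Euler classes of this twist. Each step $a\mapsto a+1$ shifts bidegrees by $(1,0)$, and $\MF^{1,0}(B_\et\Sigma_2)=0$, so no class can play the role of the topological $w_1$. The proposition itself rules the model out. For an even algebraic twist, orientability of $\MF$ would give a $\tau$-torsion-free shift of $\MF\otimes B_\et\Sigma_{2+}$, yet for $(a,w)=(2,0)$ the stated answer contains the summand $S^{3,1}/\tau$. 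Likewise, the exponents $\tau^{\lceil k/2\rceil}$ do not come from the single relation $u^2=\tau v$. Your $\mathrm{Ext}$-splitting argument over the hereditary ring $\FF_2[\tau]$ is fine for cellular modules. However, it needs $\pi_{**}$ as an $\FF_2[\tau]$-module, including all hidden $\tau$-extensions between the pieces of your filtration, and that is exactly the input you have not supplied.

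The missing idea is a weight count applied to the cofiber sequence you wrote down and then abandoned: $\Sigma^1(X\otimes X)\xrightarrow{a}\Sigma^1D_2(X)\to D_2(\Sigma^{1,0}X)$, with $X=S^{n,0}$, inducting on $n$.
\begin{itemize}
\item The source of $a$ is $\Sigma^{2n+1,0}\MF$, which has weight $0$.
\item Since $\pi_{**}\MF=\FF_2[\tau]$, the only summand of the inductively known $\Sigma^1D_2(S^{n,0})$ that admits a nonzero map from it is the bottom free cell $\Sigma^{2n+1,\lceil n/2\rceil}\MF$. On that cell the map must be $\epsilon\,\tau^{\lceil n/2\rceil}$ with $\epsilon\in\FF_2$.
\item Betti realization, where $a$ is the inclusion of the bottom cell, forces $\epsilon=1$.
\item Because $a$ is concentrated in a single summand, its cofiber splits with no extension problems. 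The other summands are simply suspended, and the one new summand $S^{2n+1,\lceil n/2\rceil}/\tau^{\lceil n/2\rceil}$ is the $k=n$ term of the formula for $a=n+1$.
\end{itemize}
The same picture can be seen in one step. The cofiber sequence $S(n\sigma)_+\to S^0\to S^{n\sigma}$ of $\Sigma_2$-spaces exhibits $\Sigma^{-n}D_2(S^{n,0})$ as the cofiber of $\Sigma^\infty_+\mathbb{R}P^{n-1}\to\Sigma^\infty_+B_\et\Sigma_2$, where $\mathbb{R}P^{n-1}$ is a constant space with cells of weight $0$. This is the correct form of your stunted-projective-space intuition: the cell in degree $k$ is hit by $\tau^{\lceil k/2\rceil}$ purely for weight reasons.
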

\begin{proof}
The cases $c(a,w) \le 1$ are treated in \cite[Lemma 3.1(2)]{arXiv:2210.07137}.
By the Thom isomorphism $D_2(\Sigma^{2n,n}X) \wequi \Sigma^{4n,2n}D_2(X)$ from \cite[Proposition 5.37]{arXiv:2104.01057}, it is enough to analyze the case $w=0$.
We have by \cite[Lemma 3.1(1)]{arXiv:2210.07137} a cofiber sequence 
\[
\Sigma^1( X \otimes X)
\xrightarrow{a}
\Sigma^1D_2(X)
\xrightarrow{\lambda}
D_2(S^{1,0}\otimes X)
\xrightarrow{\partial}
\Sigma^2(X \otimes X).
\]
We are interested in the special case $X=S^{n,0}$.
In the topological situation, $H\FF_{2}\otimes D_{2}S^n \simeq \bigoplus_{n\leq q} H\FF_{2}\otimes S^q$ (reflecting that we have power operations in all degrees starting with $n$), and the map $a$ hits the bottom cell and so is responsible for removing the bottom class as we move from $\H\FF_{2}\otimes D_{2}S^n$ to $H\FF_{2}\otimes D_{2}S^{n+1}$.  See \cite{MR836132} or \cite{MR4197999} for general topological background.

Assume now that $n>0$. 
Suppose inductively the stated decomposition 
\[
D_2 (S^{n,0})
\simeq
\left(
\bigoplus_{0<k<n} S^{n+k,\lceil k/2\rceil} / 
\tau^{\lceil k/2\rceil}
\right)
\;\oplus\;
\left(
\bigoplus_{k\ge n} S^{n+k,\lceil k/2\rceil}
\right)
\]
holds.  For degree reasons, the  only summand the map 
$$
\Sigma^1(S^{n,0}\otimes S^{n,0}) 
\xrightarrow{a}
\Sigma^1D_2(S^{n,0})
$$
can hit is the bottom cell $\Sigma^1 S^{2n,\lceil n/2\rceil}$ in the second 
(non-$\tau$-torsion) summand in the decomposition  of $D_2 (S^{n,0})$.
Under Betti realization, the map $a$ is an equivalence onto this cell. 
By weight considerations, the induced map on this summand must be multiplication by $\tau^{\lceil k/2\rceil}$.  
This completes the induction and establishes the decomposition for $D_2(S^{n,0})$ and hence for $D_2(S^{a,w})$.
\end{proof}

\begin{remark} \label{rmk:D2-basis-int}
We know from Theorem \ref{thm:NAlg-integral-pres} that for $c(a,w) \le 0$, the classes $Q_i(x_{a,w})$ with $i \ge c(a,w)$ form a basis of $D_2(x_{a,w}) \subset \NSym_{\MF}(x_{a,w})$.
By comparing degrees, we can identify them with the basis elements in the decomposition of Proposition \ref{lemma:D2sphere}.
Since the operations $Q_i$ are $S^1$-stable, and the proof of Proposition \ref{lemma:D2sphere} proceeds through the $S^1$-stabilization maps, this identification persists for $c(a,w) > 0$: the classes $Q_i(x_{a,w})$ for $i \ge c(a,w)$ provide the basis for the non-$\tau$-torsion part of the decomposition of Proposition \ref{lemma:D2sphere}.
The classes $Q_i(x_{a,w})$ for $0 < i < c(a,w)$ provide the basis for the $\tau$-torsion part.
\end{remark}

\begin{remark}
An alternative approach both defining motivic power operations, and establishing their properties, is purely through analyzing motivic extended powers.
We have chosen to reduce as much as we can to derived algebras in order to give the most streamlined proofs.
\end{remark}

\begin{corollary}
  \label{cor:dtwomodtaudecomp}
  There is a decomposition of $\MFtau$-modules
\[D_2(S^{a,w})
\simeq
\begin{cases}
 \left(
\underset{a\leq k<2w}\bigoplus S^{a+k,2w}
\right)
\;\oplus\;
\left(
\underset{2w\leq k}\bigoplus S^{a+k,w+\lceil k/2\rceil}
\right) &\text{if }a\leq 2w\\
\left(
\underset{2w<k<a}\bigoplus (S^{a+k,w+\lceil k/2\rceil} \oplus S^{a+k+1,2w})
\right)
\;\oplus\;
\left(
\underset{a\leq k}\bigoplus S^{a+k,w+\lceil k/2\rceil}
\right)  &\text{if }2w\leq a.
\end{cases}
\]
\end{corollary}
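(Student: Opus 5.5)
The plan is to base change the $\MF$-module decomposition of Proposition \ref{lemma:D2sphere} along $\MF \to \MFtau$. The key point is that the $D_2$ in the corollary, the motivic extended square computed in $\Mod_{\MFtau}$, agrees with $\MFtau \otimes_{\MF} D_2^{\MF}(S^{a,w})$. Indeed, motivic extended powers in $\Mod_A$ for a normed $A$ are obtained from those in $\SH$ by tensoring with $A$: $D_2^A(A\otimes X) \wequi A \otimes D_2(X)$. Applying this to both $A=\MF$ and $A=\MFtau$ gives the compatibility $D_2^{\MFtau}(S^{a,w}) \wequi \MFtau\otimes_{\MF} D_2^{\MF}(S^{a,w})$. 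Since base change is exact and commutes with sums, it then suffices to compute $\MFtau \otimes_{\MF} (-)$ on each summand in Proposition \ref{lemma:D2sphere}.

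\textbf{The free summands.} A summand $S^{p,q}\otimes\MF$ simply becomes $S^{p,q}\otimes\MFtau$. This handles the whole case $a\le 2w$, which transcribes verbatim, and the summands $\bigoplus_{a\le k}$ in the case $2w\le a$.

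\textbf{The torsion summands.} For $2w<k<a$, the summand is $S^{a+k,w+\lceil k/2\rceil}/\tau^{e}$ with $e=\lceil k/2\rceil-w\ge 1$. Here $\tau$ has bidegree $(0,-1)$, so this module is the cofiber of
\[ \tau^e\colon S^{a+k,w+\lceil k/2\rceil+e}\to S^{a+k,w+\lceil k/2\rceil}. \]
After tensoring with $\MFtau$, multiplication by $\tau^e$ with $e\ge1$ becomes null, because $\tau$ acts by zero on $\MFtau$-modules (the map factors through $\tau\colon \MFtau\to\MFtau$, which is zero as $\pi_{**}\MFtau=\FF_2$). The cofiber of a null map splits as target $\oplus$ suspended source, giving
\[ S^{a+k,w+\lceil k/2\rceil}\oplus S^{a+k+1,w+\lceil k/2\rceil+e}. \]
Since $w+\lceil k/2\rceil+e=2\lceil k/2\rceil$, some care is needed to match the stated weight $2w$ of the second summand. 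I expect this bookkeeping to be the main obstacle.

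\textbf{Matching the weights.} The weight appearing in the paper's form should be traced through the actual source of the $\tau$-multiplication. That source is the shifted cell coming from the cofiber sequence of \cite[Lemma 3.1(1)]{arXiv:2210.07137}, whose weight is determined by the Thom isomorphism to be $2w$ (the weight of $S^{a,w}\otimes S^{a,w}$). To resolve the discrepancy, I would recheck the torsion order in Proposition \ref{lemma:D2sphere}, namely whether the relevant map is $\tau^{\lceil k/2\rceil-w}$ from weight $2w$ or from weight $2\lceil k/2\rceil$. I would then write the second summand with the weight forced by that source, $S^{a+k+1,2w}$ as stated, adjusting the exponent convention accordingly.

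Finally, I would remark that the splitting is canonical only up to the choice of nullhomotopy, which is irrelevant for a decomposition of $\MFtau$-modules. The cellular category is a $1$-category on such sums, so no extension problems arise.
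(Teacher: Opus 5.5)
Your overall strategy is the paper's. The paper's proof is the one-line compatibility $D_2^{\MFtau}(\Sigma^{a,w}\MFtau)\wequi D_2^{\MF}(\Sigma^{a,w}\MF)/\tau$, obtained from \cite[Proposition 5.14]{arXiv:2104.01057} because $\Mod_{\MF}\to\Mod_{\MFtau}$ is a motivically cocontinuous normed functor. It then reduces each summand of Proposition \ref{lemma:D2sphere} modulo $\tau$. Routing the compatibility through $\SH$, as you do, amounts to the same thing. However, your computation of the torsion summands has a sign error, and your proposed repair does not prove the stated weight.

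Since $\tau\in\pi_{0,-1}\MF$ is a map $S^{0,-1}\to\MF$, multiplication by $\tau^e$ on $\Sigma^{p,q}\MF$ is a map $\Sigma^{p,q-e}\MF\to\Sigma^{p,q}\MF$. So the source cell has weight $q-e$, not $q+e$. This is also how the torsion arises in the proof of Proposition \ref{lemma:D2sphere}: there the map $a$ goes from $S^{2n+1,0}$ to the cell $S^{2n+1,\lceil n/2\rceil}$. With $q=w+\lceil k/2\rceil$ and $e=\lceil k/2\rceil-w\ge 1$ you get directly
\[ \MFtau\otimes_{\MF}\bigl(S^{a+k,q}/\tau^{e}\bigr)\wequi S^{a+k,q}\oplus S^{a+k+1,q-e}=S^{a+k,w+\lceil k/2\rceil}\oplus S^{a+k+1,2w}, \]
which is exactly the stated summand. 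There is no discrepancy. Your own question (is the map from weight $2w$ or from weight $2\lceil k/2\rceil$?) is answered by the notation itself: it is from weight $2w$.

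Your paragraph ``Matching the weights'' should be replaced by this computation. Rechecking the torsion order or ``adjusting the exponent convention'' until the answer matches simply asserts the target weight rather than deriving it. Moreover, with your sign the formula you would actually have established, $S^{a+k+1,2\lceil k/2\rceil}$, is false. For $(a,w)=(2,0)$ and $k=1$ it predicts $S^{4,2}$, whereas the cofiber of $\tau\colon S^{3,0}\to S^{3,1}$ contributes $S^{4,0}$. This matches the class $\delta_2(x_{2,0})\in\pi_{4,0}$ of Remark \ref{rmk:D2-basis}.

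A minor point: your closing remark that the cellular category is a $1$-category holds only for sums of the $\Sigma^{2n,n}\MFtau$. It is not needed anyway, since the cofiber of a null map splits.
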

\begin{proof}
Making the base normed algebra for the $D_2$ explicit by a superscript, this is immediate from $D_2^{\MFtau}(\Sigma^{a,w}\MF/\tau) \wequi D_2^{\MF}(\Sigma^{a,w}\MF)/\tau$ (which holds by \cite[Proposition 5.14]{arXiv:2104.01057}, because $\Mod_{\MF} \to \Mod_{\MFtau}$ is a motivically cocontinuous morphism of motivically cocomplete, normed categories).
\end{proof}

\begin{remark} \label{rmk:D2-basis}
Using Theorem \ref{thm:DAlg-pres} we see that the classes $Q_i(x_{a,w})$ (for $i > \min(0,c(a,w))$) and $\delta_j(x_{a,w})$ (for $1 < j \le c(a,w)$) form a basis for $D_2(x_{a,w}) \subset \NSym(x_{a,w})$.
Comparing bidegrees, we can identify them with the basis elements in Corollary \ref{cor:dtwomodtaudecomp}.
\end{remark}

\begin{corollary} \label{cor:partial-formula}
The boundary map \[ \partial: \pi_{p+1,q-1} D_2^{\MFtau}(x_{p,q}) \to \pi_{p,q} D_2^{\MF}(\tilde x_{p,q}) \] in the reduction mod $\tau$ cofiber sequence for $D_2(\Sigma^{a,w}\MF)$ takes the following form, with notation for the generators coming from Remarks \ref{rmk:D2-basis-int} and \ref{rmk:D2-basis}:
\begin{gather*}
  \partial(Q_i(x_{p,q})) = 0 \\
  \partial(\delta_j(x_{p,q})) = \tau^{m(j-1)-1}Q_{j-1}(\tilde x_{p,q}).
\end{gather*}
\end{corollary}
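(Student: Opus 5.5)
Since $D_2^{\MF}(\Sigma^{a,w}\MF)$ decomposes as a sum of free and $\tau$-torsion spheres by Proposition \ref{lemma:D2sphere}, and reduction mod $\tau$ commutes with $D_2$ (as in the proof of Corollary \ref{cor:dtwomodtaudecomp}), the cofiber sequence $\Sigma^{0,-1}D_2^{\MF}(\tilde x) \xrightarrow{\tau} D_2^{\MF}(\tilde x) \to D_2^{\MFtau}(x) \xrightarrow{\partial} \Sigma^{1,-1} D_2^{\MF}(\tilde x)$ splits as a sum of the cofiber sequences for each summand. I would compute $\partial$ summand by summand. For a free summand $S^{a+k,w'}\MF$, the mod-$\tau$ reduction is $S^{a+k,w'}\MFtau$, and $\partial$ vanishes on it because $\tau$ acts injectively on $\pi_{**}$ of a free summand. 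For a torsion summand $S^{a+k,w+\lceil k/2\rceil}\MF/\tau^{e}$ with $e=\lceil k/2\rceil - w$, the reduction mod $\tau$ is $S^{a+k,w+\lceil k/2\rceil}\MFtau \oplus S^{a+k+1,w+\lceil k/2\rceil - e}\MFtau = \ldots \oplus S^{a+k+1,2w}\MFtau$. This matches Corollary \ref{cor:dtwomodtaudecomp}. On the second (top) cell, $\partial$ is the Bockstein sending the generator to the class $\tau^{e-1}$ times the generator of $\MF/\tau^e$, which is nonzero. On the bottom cell, $\partial$ vanishes.

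Next I would identify generators. By Remark \ref{rmk:D2-basis-int}, the generator of the torsion summand with index $k$ ($0<k<c$) is $Q_k(\tilde x)$, while the free summands are generated by the $Q_i(\tilde x)$ with $i \ge c$. By Remark \ref{rmk:D2-basis}, the mod-$\tau$ classes $Q_i(x)$ are the reductions of the $Q_i(\tilde x)$. This covers the bottom cells of torsion summands and the free summands, so $\partial(Q_i(x))=0$ in all cases. The remaining cells $S^{a+k+1,2w}$, for $0<k<c$, must then be the $\delta_j(x)$. Matching bidegrees via Remark \ref{rmk:delta-indexing} ($\delta_j$ doubles the weight and raises Chow degree by $j$) gives $\delta_j(x) \in \pi_{a+2w+j,2w}$, hence $j = k+1$ in shifted indexing. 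As $k$ ranges over $0<k<c$, $j$ ranges over $1<j\le c$, as required.

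For $j=k+1$, the Bockstein image of $\delta_j(x)$ is $\tau^{e-1}Q_{j-1}(\tilde x)$ with $e = \lceil (j-1)/2\rceil$. In the normalized $w=0$ case this is $m(j-1)$, giving $\partial(\delta_j(x)) = \tau^{m(j-1)-1}Q_{j-1}(\tilde x)$. The general case is then checked by a weight count, or by reducing via the Thom isomorphism $D_2(\Sigma^{2n,n}X)\wequi\Sigma^{4n,2n}D_2(X)$, which shifts all classes compatibly.

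The main obstacle is pinning down the mod-$\tau$ generator exactly, rather than only up to adding $\tau$-multiples or other classes in the same bidegree. Since $\pi_{**}$ of the reduction is an $\FF_2$-vector space with a single basis element in each relevant bidegree (the $Q_i$ and $\delta_j$ have distinct bidegrees by the counting in Corollary \ref{cor:dtwomodtaudecomp}), $\delta_j(x)$ is forced to be the top cell up to adding a class in the image of reduction. Such a class lies in the kernel of $\partial$, so it does not affect the formula. I would verify this uniqueness of bidegrees carefully; it is the step where an error could hide.
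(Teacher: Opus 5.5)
Your proposal is correct and is essentially the paper's argument, which reads $\partial$ off directly from the decomposition of Proposition \ref{lemma:D2sphere}: the only nonzero boundaries come from the $\MF/\tau^{m(i)}$ summands, whose top cells (bidegree $(a+2w+i+1,2w)$) are the $\delta_{i+1}(x)$ by your bidegree count, and the Bockstein sends each such class to $\tau^{m(i)-1}Q_i(\tilde x)$, while every $Q_i(x)$ is a reduction of $Q_i(\tilde x)$ and so is killed by $\partial$. One simplification: in lower indexing the torsion exponent is $m(i)$ for every $w$, so you do not need to normalize to $w=0$ via the Thom isomorphism.
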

\begin{proof}
Immediate from the proof of Proposition \ref{lemma:D2sphere}.
\end{proof}

\begin{proof}[Proof of the mixed Cartan relations]
We may as well determine $\delta_k(xy) \in \pi_{**} \NSym(x) \otimes \NSym(y)$.
By tracking the degrees of $x$ and $y$ separately (see the end of this proof for details), and using Theorem \ref{thm:NAlg-pres}(2) (\ie our knowledge of the right hand side), we see that $\delta_k(xy)$ must be a sum of expressions of the form $P(x)Q(y)$, where $P$ and $Q$ denote operations of the form $\delta_j$ or $Q_i$.
By considering degrees, the only possible terms are those listed in the formula, \ie we must have \[ \delta_k(xy) = \sum_{\substack{i+j=k \\ 0 \ge j \ge c_y}} a_{i,j} \delta_i(x)Q_j(y), \] for some $a_{i,j} \in \FF_2$.
Applying $\partial$ to this and using Corollary \ref{cor:partial-formula} we obtain \[ \tau^{m(k-1)-1}Q_{k-1}(\tilde x \tilde y) = \sum_{\substack{i+j=k \\ 0 \ge j \ge c_y}} a_{i,j} \tau^{m(i-1)-1}Q_{i-1}(\tilde x)Q_j(\tilde y). \]
(Here $\tilde x$ is defined so that $\NSym(x) = \NSym_{\MF}(\tilde x) \otimes_{\MF} \MFtau$, and similarly for $\tilde y$.)
Thus $a_{i,j}=1$ for all $i,j$ as a consequence of the ordinary Cartan relations.

Let us now explain what we mean by tracking the degrees of $x$ and $y$.
Recall from \cite[\S13.3]{norms} that, for any abelian group $A$, there is a normed structure on the category of graded motivic $\SH(\ph)^A$, and consequently a notion of normed $A$-graded algebras.
The forgetful functor $\NAlg(\Mod_{\MFtau}^{\Z\times \Z}) \to \Mod_{\MFtau}^{\Z\times \Z}$ has a left adjoint which we denote still by $\NSym$.
If $(X, (p,q)) \in \Mod_{\MFtau}^{\Z\times \Z}$ denotes the object $X \in \Mod_{\MFtau}^{\Z\times \Z}$ placed in degree $(p,q)$, then \[ \NSym(X,(p,q)) \wequi \bigoplus_{i \ge 0} (D_i(X),(ip,iq)) \in X \in \Mod_{\MFtau}^{\Z\times \Z}. \]
Thus, placing $x$ in bidegree $(1,0)$ and $y$ in bidegree $(0,1)$, we see that $\NSym(x) \otimes \NSym(y)$ lifts to a $(\Z\times\Z)$-graded normed algebra, in which $D_i(x) \otimes D_j(y)$ is placed in bidegree $(i,j)$.
The expression $\delta_k(xy)$ lives in $D_2(xy)$, i.e., in bidegree $(2,2)$; the corresponding extended power is $D_2(x) \otimes D_2(y)$.
This is spanned by $\delta$-s and $Q$-s, as needed.
\end{proof}

\subsection{Mixed Adem relations} \label{sub:mixed-adem}
We need only consider the case $n=c$: indeed by construction, the $\delta$ operations are compatible with suspension (and so are the Voevodsky operations, being even stable), and suspension annihilates products.
The class $Q_k \delta_n(x)$ lives in $\NSym^4(x)$, which we by now know admits a basis consisting of elements of the form $\delta_a Q_b(x)$, $\delta_a(x)\delta_b(x)$, $\delta_a(x)Q_b(x)$ and $Q_a(x)Q_b(x)$, for various values of $a$ and $b$.
An examination of the bidegrees shows that the formula we are required to establish simply consists of the sum of all possible terms outlined above.
We consequently see that \[ Q_k \delta_n (x) = a \delta_n Q_{k/2}(x) + a'\delta_{n-1} Q_{(k+1)/2}(x) + \sum_{\substack{0<i<j \\ i+j=k \\ i\cdot j \text{ even}}} a_i Q_i(x) Q_j(x). \]
Here the first term only exists if $4|k$, the second only if $4|k+1$, and $a, a', a_i \in \FF_2$ are universal coefficients which make this true for any $x$ in the bidegree we are currently investigating.
We now apply this formula to the class $x+y \in \pi_{**} \NSym(x,y)$, where $y$ has the same bidegree as $x$.
Employing the Cartan relation for the $Q_k$ as well as the additivity relation $\delta_n(x+y) = \delta_n(x) + \delta_n(y) + xy$ to simplify both sides and cancelling common terms, one is left with the equality \[ a Q_{k/2}(x)Q_{k/2}(y) + \sum_{\substack{0<i<j \\ i+j=k \\ i\cdot j \text{ even}}} a_i (Q_i(x) Q_j(y) + Q_i(y) Q_j(x)) = \sum_{\substack{0<i \\ i+j=k \\ i\cdot j \text{ even}}} Q_i(x) Q_j(y). \]
It follows that all coefficients are $1$ (as required), \emph{except} that we cannot say anything about $a'$ from this perspective.
There is no term with coefficient $a'$ in the above equation, because the relevant $\delta$-operation is linear in the relevant degree.

We now utilize that by Theorem \ref{thm:MFtau-normed} $\M\Z_2^\comp/\tau$ is a normed spectrum, and so \[ \NSym(x) \wequi \NSym_{\M\Z_2^\comp/\tau}(x) \otimes_{\M\Z} \M\FF_2 \] is a reduction modulo $2$.
Consequently $\pi_{**}\NSym(x)$ carries a mod $2$ Bockstein operation \[ \beta: \pi_{*,*'}\NSym(x) \to \pi_{*-1,*'} \NSym(x). \]
Our desired formula for $4 | k+1$ is now read off from the formula for $4|k$ by applying $\beta$, using the following.

\begin{lemma}
Let $E \in \NAlg(\SH(\bar k))_{\M\Z_2^\comp/\tau/}$.
For any class $y \in \pi_{**} (E \otimes_{\M\Z_2^\comp/\tau} \M\Z/(2,\tau))$ we have \[ \beta Q_i(y) = (i-1) Q_{i-1}(y) \text{ for } i > 0 \] and \[ \beta \delta_n(y) = n \delta_{n-1}(y) + \begin{cases} y \beta(y) & y\text{ of Chow degree $n$} \\ 0 & \text{else}\end{cases}. \]
\end{lemma}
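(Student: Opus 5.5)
Both formulas are identities in the homotopy of free objects, so I reduce to the universal case. Take $E = \NSym_{\M\Z_2^\comp/\tau}(\tilde y)$ with $\tilde y$ in the bidegree of $y$. Then $E \otimes_{\M\Z_2^\comp/\tau} \M\Z/(2,\tau)$ is $\NSym(y)$, and $\beta$ is the connecting map of the cofiber sequence obtained from $\M\Z_2^\comp/\tau \xrightarrow{2} \M\Z_2^\comp/\tau \to \M\Z/(2,\tau)$ by applying $E\otimes(\ph)$, followed by reduction. In fact I work one step up, in the $\M\Z/(4,\tau)$-setting of Remark \ref{rmk:Z/2^n-normed}, where $\beta$ is the connecting map for $\Z/2\to\Z/4\to\Z/2$.

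Each side of the identity is bihomogeneous, sits in the summand $D_2(y)$ or $D_1(y)\otimes D_1(y)$, and $\beta$ respects the extended power filtration. So I only need $\beta$ on $D_2^{\M\Z/(2,\tau)}(y)$, together with the product term. By Corollary \ref{cor:dtwomodtaudecomp} and Remark \ref{rmk:D2-basis} this summand has basis $Q_i(y)$ and $\delta_j(y)$. Comparing bidegrees, $\beta Q_i(y)$ can only be a multiple of $Q_{i-1}(y)$, plus possibly a $\delta$-term of the same bidegree. Likewise $\beta\delta_n(y)$ can only be a multiple of $\delta_{n-1}(y)$, plus the quadratic term $y\beta(y)$ in $D_1\otimes D_1$. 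Hence the task is to determine universal coefficients in $\FF_2$.

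\textbf{The $Q_i$ part.} For $Q_i$ with $i>0$ these are Voevodsky operations, which are already defined integrally on $\M\Z_2^\comp$-normed algebras, before reducing mod $\tau$. The classical motivic Nishida/Bockstein relation $\beta Q^{2s} = Q^{2s-1}$, $\beta Q^{2s-1}=0$ therefore applies (as for Steenrod operations). It follows by $\tau$-torsion-freeness of $\NSym_{\MF}$ in Chow degree $\le 0$ and comparison with topology after inverting $\tau$, exactly as in the proof of Theorem \ref{thm:NAlg-integral-ops}. Translating to lower indices via $Q_i = Q^{i+2w}$ gives the coefficient $i-1 \pmod 2$. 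I extend to positive Chow degree by $S^1$-stability of $\beta$ and of $Q_i$. A possible $\delta$-term is excluded by the same suspension argument, since the operations here are $S^1$-stable.

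\textbf{The $\delta_n$ part.} The $\delta$-operations come from the derived-algebra structure, and Remark \ref{rmk:Z/2^n-normed} gives a compatible map $\LSym_*\to\NSym_*$ over $\M\Z/(4,\tau)$. So $\beta$ on $\delta_n$ is computed inside derived (simplicial) commutative $\Z/4$-algebras reduced mod $2$. There the formula $\beta\delta_n(y) = n\delta_{n-1}(y)$ plus the correction $y\beta y$ in top degree is the classical Bockstein relation for divided power operations. The linear part is Dwyer's/Bousfield's computation in simplicial commutative rings. The top-degree correction reflects that $\delta_c$ is the divided square $\gamma_2$, and $\beta\gamma_2(y) = y\beta(y)$ follows from $\gamma_2(y+\beta' )$-type additivity, or directly from $\gamma_2(\tilde y) $ in $\Z/4$ with $2\gamma_2 = \tilde y^2$.

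I expect the main obstacle to be justifying this transfer to the derived $\Z/4$ setting: one must check that the connecting map $\beta$ is compatible with the forgetful functor to derived algebras. If citing classical results is awkward, I would instead pin down the coefficient of $\delta_{n-1}$ by suspension ($\delta_n$ commutes with $S^1$-suspension, killing the product term) down to the top-degree case $n=c$. There I would use additivity on $y+z$ to force the product coefficient, and a single explicit computation in $\LSym$ of a low-degree sphere to fix the linear coefficient.
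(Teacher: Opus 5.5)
\textbf{Gap: the universal example.} You reduce to $E=\NSym_{\M\Z_2^\comp/\tau}(\tilde y)$ with $y$ the reduction of $\tilde y$. Write $E/2$ for $E\otimes_{\M\Z_2^\comp/\tau}\M\Z/(2,\tau)$. Then $y$ lies in the image of $\pi_{**}E\to\pi_{**}(E/2)$. The Bockstein is the composite $E/2\to\Sigma E\to\Sigma E/2$, which kills that image, so $\beta y=0$.

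Your argument therefore only proves the lemma for mod-$2$ classes that lift to $E$. In your example the term $y\beta(y)$ is identically zero; there is no summand ``$D_1(y)\otimes D_1(y)$'' of $\NSym(y)$ where it could live.

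This generality is actually used. In \S\ref{sub:mixed-adem} the $\delta$-formula is applied to $y=Q_{k/2}(x)$, which has $\beta y=Q_{k/2-1}(x)\neq 0$. The resulting product term $Q_{k/2}(x)Q_{k/2-1}(x)$ is exactly one of the summands of $A$ for $Q_{k-1}\delta_c(x)$. Likewise the $Q$-formula is applied to $\delta_c(x)$, whose Bockstein $c\,\delta_{c-1}(x)$ need not vanish.

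The correct universal example corepresents mod-$2$ classes. Maps $\Sigma^{**}\M\Z_2^\comp/\tau\to E/2$ are the same as maps $\Sigma^{**}D(\M\Z/(2,\tau))\to E$. So take $E$ free on $\Sigma^{**}D(\M\Z/(2,\tau))$; then $E/2\simeq\NSym(y,\beta y)$ with $y$ and $\beta y$ independent generators.

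In this example your bidegree count is no longer decisive. $D_2(y\oplus\beta y)$ also contains $D_2(\beta y)$ and the cross term $y\cdot\beta y$.
\begin{itemize}
\item For $i>0$, the class $Q_i(\beta y)$ has exactly the bidegree of $\beta Q_i(y)$. It must be excluded by running your $\tau$-torsion-freeness and topology argument on the two-generator algebra $\NSym_{\MF}(y,\beta y)$, using that in topology $\beta Q^s=(s-1)Q^{s-1}$ holds for all classes.
\item For $\delta_n$ with $y$ of Chow degree $n$, one gets $\beta\delta_n y=a\,\delta_{n-1}y+b\,y\beta y$. The value $b=1$ follows from $\delta_n(y+z)=\delta_n y+\delta_n z+yz$ and the Leibniz rule for $\beta$. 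This argument is vacuous when all Bocksteins vanish, as in your example.
\end{itemize}
With this correction, the rest of your outline is the paper's route: suspension to negative Chow degree, $\tau$-torsion-freeness plus topology for $Q_i$, and transfer to derived algebras for $\delta_n$. Your mod-$4$ version of the transfer is fine; the paper takes the limit over $\Z/2^n$ via Remark~\ref{rmk:Z/2^n-normed}.

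\textbf{Second gap: the linear coefficient.} The coefficient $a=a_n$ depends on $n$. Suspending to the top case $n=c$ does not remove this dependence, so ``a single explicit computation of a low-degree sphere'' cannot fix it. You also give no precise reference for a ``classical'' Bockstein formula for the $\delta_i$; the paper proves it.

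The paper determines $a_n$ inductively from $a_2=0$, using integral information about $\LSym^2$ of Moore objects (via Milgram) to constrain $\beta$ on $\LSym^2(y,\beta y)$.

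For this part your sphere idea can be made to work, but only if used in every degree. We have $\LSym^2_{\Z}(\Z[c])\simeq\tilde C_*(\mathrm{SP}^2S^c/S^c)\simeq\tilde C_*(\Sigma^{c+1}\mathbb{RP}^{c-1})$. For $2\le j\le c$, $\delta_j(y)$ is the unique nonzero mod-$2$ class in degree $c+j$. The Bockstein of $\mathbb{RP}^{c-1}$ then gives $a_j\equiv j \pmod 2$. The product term, however, genuinely requires the Moore-object example.
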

\begin{proof}
An $\M\Z_2^\comp/\tau$-linear map from $\Sigma^{**} \M\Z_2^\comp/\tau$ to $E \otimes_{\M\Z_2^\comp/\tau} \M\Z/(2,\tau)$ is the same as an $\M\Z_2^\comp/\tau$-linear map from $\Sigma^{**} D(\M\Z/(2,\tau))$ to $E$.
Here we use $D$ to denote the $\M\Z_2^\comp$-linear dual.
We may thus replace $E$ by $\NSym_{\M\Z_2^\comp/\tau} \Sigma^{**}D(\M\Z/(2,\tau))$, so that $E \otimes_{\M\Z_2^\comp/\tau} \M\Z/(2,\tau) \wequi \NSym(y,\beta y)$.

To prove the first formula, since both $\beta$ and $Q_i$ are compatible with $S^1$-suspension, it suffices to prove the claim for $y$ of negative Chow degree.
Moreover, it suffices to prove the formula for $\NSym_{\MF}(y,\beta y)$ (\ie before reduction modulo $\tau$), and since we have no $\tau$-torsion, we may prove it after inverting $\tau$, \ie in topology.
There it is well-known; see \eg \cite[\S III]{MR836132}

We now approach the second formula.
Remark \ref{rmk:Z/2^n-normed} shows that we have a map $\LSym_{\Z/2^n} \to \NSym_{\M\Z/(2^n,\tau)}$ which, upon tensoring over $\Z/2^n$ with $\Z/2$, turns into $\LSym \to \NSym$.
Taking the limit over $n$, we obtain a map $(\LSym_{\Z_2^\comp})_2^\comp \to (\NSym_{\M\Z_2^\comp/\tau})_2^\comp$.
This implies that our operation $\beta$ is compatible with the (analogously defined) operation on $\LSym(y,\beta y)$, and so we may perform the computation there.
As before, by compatibility with $S^1$-suspension, we may assume that $y$ has Chow degree $n$, and we may as well ignore the weight.
Using that $\beta\delta_n y \in \LSym^2(y,\beta y)$ and considering degrees, we find that $\beta \delta_n y = a \delta_{n-1} y + b y \beta y$ for some $a, b \in \FF_2$.
As before we see that $b=1$ by considering the non-additivity of $\delta_n$.
Let us observe that $\pi_* \LSym^2(\Sigma^nD(\Z/2))$ is simple $2$-torsion.
Indeed since $\Sigma^nD(\Z/2) \wequi \Z[S^{n-1}/2]$ is the homology of a Moore space, this can be extracted (with some effort) from \cite[Theorem 4.2]{MR242149}.
It follows that $\beta$ must be an exact differential.
Since $\beta\delta_2(y) = 0$ for degree reasons (there is no $\delta_1$) we see inductively that $a$ is zero if $n$ is even and $a=1$ if $n$ is odd.
\end{proof}

\section{Motivic Hochschild homology}
\subsection{Tensors}
Given $A \in \NAlg(\SH(\bar k))$, following \S\ref{subsec:mot-cat} (see in particular Example \ref{ex:mot-cat}), we obtain a tensoring of $\NAlg(\SH(\bar k))_{A/} \wequi \NAlg(\Mod_A(\SH(\bar k)))$ by motivic spaces.
Given $X \in \Spc(\bar k)$ and $E \in \NAlg(\SH(\bar k))_{A/}$, we denote the tensor by $X \tensor{A} E$.

\subsection{Main theorem and primary reductions} \label{sec:main-thm}
We shall describe $\Gm \tensor{\1} \MF$.
By construction this is a normed algebra, and the base point $* \to \Gm$ induces $\MF \wequi * \tensor{\1} \MF \to \Gm \tensor{\1} \MF$, so it is even a normed $\MF$-algebra.
Using Examples \ref{ex:nsym-enriched} and \ref{ex:linear-tensor} we obtain a canonical map $\Sigma^\infty_+ \Gm \otimes \MF \to \Gm \tensor{\1} \MF$.
Splitting the source and extending $\MF$-linearly (using that the right hand side is in particular an $\MF$-module) we obtain a map $\Sigma^{1,1}\MF \otimes \MF \to \Gm \tensor{\1} \MF$.
Applying it to $\tau_0 \in \pi_{1,0}(\MF \otimes \MF)$ we obtain the class $\mu_{2,1} \in \pi_{2,1}(\Gm \tensor{\1} \MF)$.

\begin{theorem} \label{thm:main}
Let $\bar k$ be an algebraically closed field of characteristic $\ne 2$.
Write $\MF[\mu_{2,1}]$ for the free $\scr E_1$-$\MF$-algebra on a generator in degree $(2,1)$.
The canonical map (of $\scr E_1$-$\MF$-algebras, sending $\mu_{2,1}$ to the element constructed above) \[ \MF[\mu_{2,1}] \to \Gm \tensor{\1} \MF \] is an equivalence.
\end{theorem}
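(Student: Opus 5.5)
Both sides of the map in Theorem \ref{thm:main} are $\MF$-modules, and $\pi_{**}\MF=\FF_2[\tau]$. The plan is to show the map is an equivalence after inverting $\tau$ and after reducing modulo $\tau$, and then glue. Write $f$ for the map and $C$ for its cofiber. If $C[\tau^{-1}]=0$ and $C/\tau=0$, then multiplication by $\tau$ is an equivalence on $C$, so $C \wequi C[\tau^{-1}]=0$.

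Making this gluing rigorous needs a convergence or boundedness input on the source and target. Concretely, I would use that both sides are $2$-complete, or that their homotopy is bounded in the weight direction for fixed Chow degree. This ensures that a $\tau$-periodic object with vanishing $\tau$-localization is zero. The source is visibly of this form, being $\bigoplus_n\Sigma^{2n,n}\MF$. For the target, I would extract the needed boundedness from the mod $\tau$ computation below together with a connectivity estimate.

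\emph{Step 1 ($\tau$ inverted).} Tensoring with $\MF[\tau^{-1}]$ commutes with the normed tensor, since base change is a left adjoint compatible with the enrichment (\S\ref{subsub:further-prop-tensors}). Under the equivalence $\SH(\bar k)_2^\comp[\tau^{-1}]\wequi\Sp_2^\comp$ given by Betti or étale realization, $\Gm$ goes to $S^1$ and normed algebras go to $\scr E_\infty$-rings. So $\Gm\tensor{\1}\MF[\tau^{-1}]$ corresponds to $S^1\otimes H\FF_2=\THH(H\FF_2)$. The class $\mu_{2,1}$ realizes to the $\sigma$-suspension of $\tau_0=\xi_1$-dual, i.e.\ Bökstedt's generator $\mu$. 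Classical Bökstedt periodicity then gives the equivalence after inverting $\tau$.

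\emph{Step 2 (mod $\tau$).} By scalar extension along $\MF\to\MFtau$ (Theorem \ref{thm:MFtau-normed}), $(\Gm\tensor{\1}\MF)\otimes_{\MF}\MFtau\wequi\Gm\tensor{\MFtau}(\MF\otimes\MFtau)$. Here $\MF\otimes\MFtau$ is cellular with homotopy the motivic dual Steenrod algebra mod $\tau$. I would resolve $\MF\otimes\MFtau$ simplicially by free normed $\MFtau$-algebras $\NSym(V_\bullet)$ with $V_\bullet$ cellular. By Example \ref{ex:nsym-enriched}, $\Gm\tensor{\MFtau}\NSym(V)\wequi\NSym(V\oplus\Sigma^{1,1}V)$, and Theorem \ref{thm:NAlg-pres} computes its homotopy completely. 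This gives a spectral sequence whose $E^2$-page is a derived functor of the $\Gm$-tensor on the algebraic category $\NA$ of $\FF_2$-algebras equipped with $\delta$'s and only the operations $Q_i$, $i\le2$, subject to the relations of Theorem \ref{thm:NAlg-ops}. The input to this derived functor is the dual Steenrod algebra mod $\tau$.

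The key algebraic claim is that the dual Steenrod algebra mod $\tau$ is free in $\NA$. It is generated by $\tau_0$ under the $Q_1$/$Q_2$-type operations: the Nishida-type formulas $Q(\tau_i)=\tau_{i+1}$ and $\xi_{i+1}$ are produced from $\tau_i$ by $\beta$-companion operations. With this freeness, the derived functor is concentrated in degree $0$ and equals the free $\NA$-object on $\tau_0,\sigma\tau_0$. Since $\Gm$-suspension shifts Chow degree by $1$, the class $\sigma\tau_0=\mu_{2,1}$ has Chow degree $0$. Its image under operations accounts exactly for polynomiality $\FF_2[\mu]$ tensored with the Steenrod algebra factor. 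So the spectral sequence collapses, and $\pi_{**}$ of the mod $\tau$ reduction is $\FF_2[\mu_{2,1}]$ over $\MFtau$, after unwinding the relative tensor. From this, $f/\tau$ is an isomorphism on homotopy of cellular modules, hence an equivalence.

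\emph{Main obstacle.} I expect the hardest step to be the freeness of the mod $\tau$ dual Steenrod algebra in $\NA$ (the algebraic Hopkins--Mahowald input), together with the identification of the $E^2$-page with derived functors in $\NA$. The latter requires showing that the forgetful comparison from actual homotopy of free normed algebras to $\NA$ is exact enough, meaning that free objects go to free objects. I would attack freeness first by computing the operations $Q_1,Q_2,\delta_j$ on $\tau_i,\xi_i$ via the mixed Adem and Bockstein lemma, and then comparing Poincaré series against the free $\NA$-object on $\tau_0$.
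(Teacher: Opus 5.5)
Your outline matches the paper's: a $\tau$-inverted/mod-$\tau$ split; realization plus Bökstedt periodicity after inverting $\tau$ (the paper makes ``$\Gm\mapsto S^1$'' precise via $\NAlg=\CAlg$ étale-locally and the pro-étale span $S^1\to\lim_n\Gm/2^n\leftarrow\Gm$); and, mod $\tau$, a free-resolution spectral sequence with $E^2$-page a derived functor on $\NA$. That spectral sequence collapses because $\pi^{\NA}_{**}(\MF\otimes\MFtau)$ is free on $\tau_0$, with $\tau_i=Q_2^i\tau_0$ and $\xi_{i+1}=Q_1Q_2^i\tau_0$.

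Three smaller corrections:
\begin{enumerate}
\item The gluing needs no boundedness input. If $C/\tau=0$, then $\tau$ is invertible on $C$, so $C\wequi C[\tau^{-1}]=0$.
\item The base-change identity is $(\Gm\tensor{\1}\MF)\otimes\MFtau\wequi\Gm\tensor{\MFtau}(\MF\otimes\MFtau)$, with $\otimes$ over the sphere, not over $\MF$. You then use that $C/\tau$ is a retract of $C\otimes\MFtau$, since $C$ is an $\MF$-module.
\item $\Gm$-suspension lowers the Chow degree by $1$; it does not raise it.
\end{enumerate}

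The genuine gap is the identification of the $E^2$-page. You call it a derived functor of ``the $\Gm$-tensor on $\NA$'' and claim its value on the free object is the free $\NA$-object on $\tau_0$ and $\mu=\sigma\tau_0$. That is false. In bidegree $(6,3)$ the free $\NA$-object contains $Q_2(\mu)$ and $\tau_0Q_1(\mu)$. These are in addition to $\xi_2,\xi_1^3,\xi_1^2\mu,\xi_1\mu^2,\mu^3$, which already span that bidegree of the true answer. So a collapse of your $E^2$-page would give the wrong result.

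The missing ingredient is how the $\Gm$-suspension $d$ acts on power operations:
\begin{enumerate}
\item $d$ is a derivation;
\item $dQ_i=0$ for $i\le 0$, and $d\delta_j=0$;
\item $dQ_1=Q_{-1}d$ and $dQ_2=Q_0d$;
\item $dQ_i=Q_{i-2}d$ for $i>2$.
\end{enumerate}
The paper proves these via $S^1$-stability, reduction to negative Chow degree, $\tau$-torsion-freeness of free normed $\MF$-algebras, and Betti realization.

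Relation (4) gives $d(Q_4x)=Q_2(dx)$. But $Q_4x$ is an independent generator in $\NA$, because $Q_4$ is not part of the $\NA$-structure. An $\NA$-valued algebraic tensor would therefore treat $d(Q_4x)$ and $Q_2(dx)$ as independent and overcount. This is why the paper uses $G\colon\NA\to\DA$: the initial derived algebra under $A$ with a derivation $d$ obeying relations (1)--(3), so $Q_1,Q_2$ are forgotten on the target.

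With this $G$, $d$ matches two generating sets:
\begin{itemize}
\item the $\NA$-generators $Q_Ix$ of $\pi_{**}\NSym(x)$, with $I$ op-admissible and all entries $\ge 3$;
\item the $\DA$-generators $Q_Jdx$ of $\pi_{**}\NSym(dx)$, with $J$ op-admissible and all entries positive.
\end{itemize}
This gives $G\pi^{\NA}_{**}\NSym X\cong\pi^{\DA}_{**}(\Gm\tensor{\MFtau}\NSym X)$ and $G(F(x))=F(x)\otimes F_{\DA}(dx)$.

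For $x=\tau_0$, the class $\mu$ has Chow degree $0$, so $F_{\DA}(\mu)=\FF_2[\mu]$. This, not freeness on two generators, is what produces $\pi_{**}(\MF\otimes\MFtau)[\mu_{2,1}]$. It is consistent with $d\tau_1=Q_0\mu=\mu^2$ and $d\xi_1=Q_{-1}\mu=0$.
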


\begin{remark}
Recall \cite[Proposition 4.1.1.18]{HA} that as a spectrum, \[ \MF[\mu_{2,1}] \wequi \bigoplus_{n \ge 0} \Sigma^{2n,n} \MF, \] and as an $\FF_2$-algebra, \[ \pi_{**} \MF[\mu_{2,1}] \wequi \pi_{**}(\MF)[\mu_{2,1}]. \]
\end{remark}

By definition, the map of Theorem \ref{thm:main} is one between $\MF$-modules, which are in particular $2$-complete.
We may thus as well work in the category $\SH(\bar k)_2^\comp$.
Recall (\eg from \cite[Proposition 4.7]{bachmann-slice} the element $\tau \in \pi_{0,-1} \1_2^\comp$ lifting $\tau \in \pi_{0,-1}\MF$.
As always, it suffices to show that we obtain an equivalence after inverting $\tau$ and after modding out by $\tau$.
Since inverting $\tau$ is essentially Betti realization, the former is relatively straightforward.

\begin{lemma}
Theorem \ref{thm:main} holds after inverting $\tau$.
\end{lemma}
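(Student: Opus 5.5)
The plan is to identify both sides after inverting $\tau$ with topological objects via Betti realization, and then reduce to classical Bökstedt periodicity. We work $2$-adically, using the equivalence $\SH(\bar k)_2^\comp[\tau^{-1}] \wequi \Sp_2^\comp$ recalled in the introduction. Over a general algebraically closed field this comes from étale realization; over $\CC$ it is Betti realization. Write $R$ for the symmetric monoidal, colimit-preserving functor $\SH(\bar k)_2^\comp \to \SH(\bar k)_2^\comp[\tau^{-1}] \wequi \Sp_2^\comp$. Since the map in Theorem \ref{thm:main} is one of $\MF$-modules, it suffices to show that its image under $R$ is an equivalence. On the source, $R$ sends $\MF[\mu_{2,1}]$ to the free $\scr E_1$-$H\FF_2$-algebra on a class in degree $2$, because $R$ is symmetric monoidal and commutes with colimits, and $\MF[\tau^{-1}]$ realizes to $H\FF_2$.

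For the target, the first step is to check that $R$ is compatible with normed structures and with tensors. Inverting $\tau$ is a smashing localization, and I expect normed algebras to pass to the localized category, where norms along finite étale maps of $\bar k$-schemes become ordinary $\scr E_\infty$-structure. More precisely, $\SH(\ph)_2^\comp[\tau^{-1}]$ should form a normed category equivalent to the constant one on $\Sp_2^\comp$ over $\bar k$; this is the step needing care. Granting this, $R$ induces a colimit-preserving functor $\NAlg(\SH(\bar k)) \to \CAlg(\Sp_2^\comp)$. It commutes with $f_\sharp$ for smooth $f$, which become colimits over the realizations of the schemes. By \S\ref{subsub:further-prop-tensors} it therefore commutes with tensoring by motivic spaces, where a motivic space $X$ acts on $\CAlg(\Sp)$ through its realization $X(\CC)$. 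Since $\Gm(\CC) \simeq S^1$, we get
\[
R(\Gm \tensor{\1} \MF) \wequi S^1 \otimes_{\CAlg} H\FF_2 \wequi \THH(H\FF_2).
\]
As a cross-check, it may be cleaner to observe that tensoring with $\Gm$ over $\MF$ is computed by the colimit formula of Lemma \ref{lemma:tensorsinmotiviccategories}, and that this formula visibly commutes with $R$.

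The final step is to identify the class and invoke Bökstedt. The class $\mu_{2,1}$ is the image of $\tau_0 \in \pi_{1,0}(\MF\otimes\MF)$ under the suspension map $\Sigma^{1,1}\MF\otimes\MF \to \Gm\tensor{\1}\MF$. Under $R$, $\tau_0$ goes to the dual Steenrod generator $\xi_1 \in \pi_1(H\FF_2 \otimes H\FF_2)$, and the map goes to the $S^1$-suspension $\sigma$. So $R(\mu_{2,1}) = \sigma\xi_1$, which is Bökstedt's periodicity generator. Bökstedt's theorem $\pi_*\THH(H\FF_2) = \FF_2[\sigma\xi_1]$ then shows that the induced $\scr E_1$-map $H\FF_2[\sigma\xi_1] \to \THH(H\FF_2)$ is an isomorphism on homotopy. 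The source side is justified by the Remark following Theorem \ref{thm:main}. This gives the claim after inverting $\tau$.

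The main obstacle is the compatibility of normed tensors with realization, that is, making precise that $\tau$-inversion collapses normed motivic algebras to $\scr E_\infty$-rings compatibly with the enrichment. I would first try to prove that the $\tau$-inverted norms $N_{L/\bar k}$ are given by the ordinary tensor power, which holds because finite étale covers over $\bar k$ are split. Then I would verify commutation with $f_\sharp$ directly on free algebras $\NSym$ via Example \ref{ex:nsym-enriched}, and extend by resolving by free algebras.
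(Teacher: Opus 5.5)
Your outline matches the paper's: reduce via $\tau$-inversion to $\THH(H\FF_2)$, identify $\mu_{2,1}$ with $\sigma\xi_1$, and apply Bökstedt. The source side and the final step are fine. The gap is the step you ``grant,'' which is essentially the whole content of the lemma, and your plan for it would fail. The splitting of finite étale $\bar k$-algebras holds just as well before inverting $\tau$, so it cannot be why normed algebras collapse to $\scr E_\infty$-algebras. Before inverting $\tau$ they do not collapse: $D_2(\1)\simeq\Sigma^\infty_+B_\et\Sigma_2$, and Example \ref{ex:tensors-NAlg-CAlg} shows that tensors in $\NAlg$ and $\CAlg$ genuinely differ. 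A normed structure involves norms along finite étale maps $T'\to T$ of smooth $\bar k$-schemes, such as squaring on $\Gm$. These do not split, and they are exactly what $\Gm\tensor{\1}(\ph)$ sees. Likewise, checking compatibility with $f_\sharp$ on free algebras requires $R\circ\NSym\simeq\Sym\circ R$ relative to every $T$. That is, the motivic extended powers built from $B_\et\Sigma_n$ must become ordinary extended powers, which is the same unproved point. You do get a colimit-preserving functor $\NAlg(\SH(\bar k))\to\CAlg(\Sp_2^\comp)$ for free by forgetting to $\CAlg$, but its compatibility with $\Gm$-tensors is not formal.

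The missing input is étale descent. One has $\SH(S)_2^\comp[\tau^{-1}]\simeq\SH_\et(S)_2^\comp$ compatibly in $S$. There, finite étale maps are locally split, so $\NAlg(\SH_\et(S))\simeq\CAlg(\SH_\et(S))$ \cite[Corollary C.12]{norms}; this is how the paper proceeds. You would still have to identify the $\Gm$-tensor in $\CAlg(\SH_\et(\bar k)_2^\comp)$ with the $S^1$-tensor. Your claim that motivic spaces act ``through $X(\CC)$'' needs proof. One option is an Artin comparison computing $\ul\Map(A,B)(T)\simeq\Map(A,B)^{T(\CC)}$ for $A,B$ pulled back from the base. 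In any case it only makes sense over $\CC$, whereas the lemma is stated over an arbitrary $\bar k$. The paper avoids realization at this point. The zigzag $S^1\to\lim_n\Gm/2^n\simeq B_\proet\mu_{2^\infty}\leftarrow\Gm$ of pro-étale motivic spaces becomes an equivalence in $\SH_\proet(\bar k)_2^\comp$ by rigidity. Resolving by free algebras then gives $S^1\otimes A\simeq\Gm\otimes A$ for all $A\in\CAlg(\SH_\proet(\bar k)_2^\comp)$.
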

\begin{proof}
We shall construct an identification of $\Gm \tensor{\1} \MF[\tau^{-1}] \wequi S^1 \tensor{\1} \MF[\tau^{-1}]$.
Under the equivalence $\SH(\bar k)_2^\comp[\tau^{-1}] \wequi \Sp_2^\comp$ this identifies $\Gm \tensor{\1} \MF[\tau^{-1}]$ with the usual THH of $H\FF_2$, and so the claim reduces to classical Bökstedt periodicity (for which see, \eg \cite[Theorem 1.1]{MR4490929}).

Recall that \cite[Corollary 7.6(3)]{bachmann-slice} \cite[Proposition 3.8(2)]{bachmann-stems} \[ \SH(S)_2^\comp[\tau^{-1}] \wequi \SH_\et(S)_2^\comp \hookrightarrow \SH_\proet(S)_2^\comp, \] at least for $S$ finite type over $\bar k$.
Here $\SH_\proet(S)$ denotes a version of $\SH(S)$ constructed in \cite[\S3.1]{bachmann-stems}.
Since $\NAlg(\SH_\et(S)) \wequi \CAlg(\SH_\et(S))$ \cite[Corollary C.12]{norms}, it will be enough to determine the $\Gm$-tensor in $\CAlg(\SH_\proet(S)_2^\comp)$.
Write $\Gm/2^n$ for the derived quotient taken in the étale topology, so that $\ul\pi_0(\Gm/2^n) = 0$ and $\ul\pi_1(\Gm/2^n) \wequi \mu_{2^n}$, \ie $\Gm/2^n \wequi B_\et\mu_{2^n}$.
We have the canonical map $\Gm \to \lim_n \Gm/2^n \in \Shv_\proet(\bar k)$, and a compatible family of roots of unity induces $S^1 \to \lim_n \Gm/2^n \in \Shv_\proet(\bar k)$.
The object $\lim_n \Gm/2^n$ coincides in fact with $B_\proet \mu_{2^\infty}$.
This implies via \cite[Theorem 3.6]{bachmann-SHet} that the span $S^1 \to \lim_n \Gm/2^n \leftarrow \Gm \in \Spc_\proet(\bar k)$ is inverted in $\SH_\proet(\bar k)_2^\comp$.
By computing tensors via resolution by free algebras\NB{details?}, this implies that for any $A \in \CAlg(\SH_\proet(\bar k)_2^\comp)$ the maps $S^1 \tensor{\1} A \to (\lim_n \Gm/2^n) \tensor{\1} A \leftarrow \Gm \tensor{\1} A$ are equivalences.
This proves what we need.
\end{proof}

We must thus prove that the map from Theorem \ref{thm:main} becomes an equivalence modulo $\tau$.
Since it is a map of $\MF$-modules, we may as well prove that it becomes an equivalence after tensoring with $\MF/\tau = \MFtau$.
Recalling that $\1 \to \MFtau$ is a morphism of normed spectra (Theorem \ref{thm:MFtau-normed}) and scalar extension preserves tensors (\S\ref{subsub:further-prop-tensors}) we must thus show that a certain canonical map \[ \MF \otimes \MFtau[\mu_{2,1}] \to \Gm \tensor{\MFtau} (\MF \otimes \MFtau) \] is an equivalence.
We will do this in \S\ref{subsec:conclusion}.

\subsection{Properties of the $\Gm$-suspension} \label{subsec:Gm-susp}
Let $A \in \NAlg(\Mod_{\MFtau})$.
Then we have a canonical map $\Sigma^{1,1} A \hookrightarrow \Gm_+ \otimes A \to \Gm \tensor{\MFtau} A$, inducing on homotopy groups a map \[ d: \pi_{p-1,q-1} A \to \pi_{p,q}(\Gm \tensor{\MFtau} A). \]
Recall also that  $\Gm \tensor{\MFtau} A$ is canonically an $A$-algebra via the base point of $\Gm$.

\begin{lemma} \label{prop:Gm-susp}
The map $d$ satisfies the following properties.
\begin{enumerate}
\item It is a derivation.
\item $dQ_i x = 0$ and $d\delta_j y = 0$ for $i \le 0$ and any $j$ (such that $\delta_j(y)$ is defined).
\item $dQ_1x = Q_{-1}dx$ and $dQ_2 x = Q_0 dx$.
\item $dQ_ix = Q_{i-2}dx$ for $i > 2$.
\end{enumerate}
\end{lemma}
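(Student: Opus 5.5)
We reduce everything to the universal case and then to statements about extended powers. Since $d$, the ring structure and the power operations are natural in $A$, and everything in sight is compatible with maps of normed $\MFtau$-algebras, it suffices to verify each identity in the free algebra $A = \NSym(x_{p,q})$ (or $\NSym(x,y)$ for (1)). By Example \ref{ex:nsym-enriched}, $\Gm \tensor{\MFtau} \NSym(X) \wequi \NSym(\Sigma^\infty_+\Gm \otimes X) \wequi \NSym(X \oplus \Sigma^{1,1}X)$. Under this identification $d$ is induced by the inclusion of the summand $\Sigma^{1,1}X$, together with the map $\Gm_+ \to \Gm$. The homotopy of the target is known by Theorem \ref{thm:NAlg-pres}(2), so each claim reduces to identifying a specific class in $\pi_{**}\NSym(x, dx)$.

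For (1), we use the diagonal of $\Gm$. The multiplication on $\Gm\tensor{}A$ is induced by the fold map, and the composite of $\Sigma^{1,1}(A\otimes A)$ with the multiplication is computed via $\Gm \to \Gm\times\Gm$. Stably, $\Sigma^\infty_+\Gm \simeq \1 \oplus S^{1,1}$, and the diagonal restricted to $S^{1,1}$ has components $(\mathrm{id},\mathrm{id})$ into the two copies of $S^{1,1}$, plus a component into $S^{1,1}\otimes S^{1,1}=S^{2,2}$. That last component is the class $[-1]\cdot$ (up to the Hopf-type correction), which vanishes over an algebraically closed field because $\rho=[-1]=0$. This gives the Leibniz rule $d(xy)=x\,dy+dx\,y$, in analogy with the $S^1$-suspension argument in the proof of Theorem \ref{thm:NAlg-integral-ops}.

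For (2)–(4), we compute $d$ on $D_2$. The element $dP(x)$, for $P$ a quadratic operation, lies in the weight-$2$ part $D_2(X\oplus\Sigma^{1,1}X)$ restricted to the component where $\Sigma^{1,1}X$ appears, which is $X\otimes \Sigma^{1,1}X \oplus D_2(\Sigma^{1,1}X)$. The first summand does not receive $d$, because $d$ factors through $\Gm$ and not $\Gm\times\Gm$. More precisely, $d$ composed with $D_2(X)\to D_2(X\otimes\Sigma^\infty_+\Gm)$ factors through the "diagonal" $D_2(X)\otimes \Gm \to D_2(X\otimes \Gm)$. This is the motivic analogue of the classical fact that $\sigma Q^i = Q^i\sigma$ and $\sigma$ kills decomposables.

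Concretely, $d\circ P$ is given by the composite
\[ \Sigma^{1,1}D_2(X) \to D_2(\Sigma^{1,1}X), \]
the $\Gm$-analogue of the map $\lambda$ of \cite[Lemma 3.1(1)]{arXiv:2210.07137}. We compute this map on the bases of Corollary \ref{cor:dtwomodtaudecomp} / Remark \ref{rmk:D2-basis}, working over $\MF$ first (Proposition \ref{lemma:D2sphere}) and then reducing mod $\tau$.

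For $c\le 0$ the classes are $\tau$-torsion free, so the map is determined after inverting $\tau$, i.e.\ in topology. There $\Gm$ becomes $S^1$ and the map is the usual suspension, taking $Q^i$ to $Q^i$. Translating to lower indices, the weight of the source class is $w$ and the weight of the target class is $w+1$, so $Q^{i+2w}$ becomes $Q_{i-2}$ on $dx$. This yields (4) for $i>2$. For $i=1,2$ it yields (3), with the shift $i\mapsto i-2$, noting that $Q_{-1},Q_0$ are topological. For $i\le 0$, the shifted operation $Q_{i-2}$ on $dx$ is below the bottom $Q_{c(dx)}$ with $c(dx)=c(x)-1$. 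It is killed after the $\tau$-power comparison: weights force a positive power of $\tau$, hence zero mod $\tau$. This gives $dQ_ix=0$.

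For $\delta_j$, we argue by weight. Since $d\delta_j y$ would have to be a $\tau$-multiple of a $Q$-class of $dy$, it vanishes mod $\tau$. Alternatively, the $\delta$-classes are $\tau$-torsion summands whose image lands in a weight with no non-$\tau$-divisible basis element. Stability of the $Q_i$ under $S^1$-suspension extends all statements from Chow degree $\le 0$ to arbitrary Chow degree.

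The main obstacle is the precise $\tau$-exponent bookkeeping in this last step, i.e.\ checking that the weight comparison forces an exact $\tau^0$ coefficient in cases (3) and (4) and a strictly positive one in (2). I would first settle it by writing out the weights $2w+m(i)+1$ versus $2(w+1)+m(i-2)$ explicitly.
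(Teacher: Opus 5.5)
There is a genuine gap in the $\delta$-part of (2), and it cannot be closed, because the claim is false for the top operation.

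You assert that in $D_2(X\oplus\Sigma^{1,1}X)\simeq D_2(X)\oplus D_2(\Sigma^{1,1}X)\oplus X\otimes\Sigma^{1,1}X$ the cross summand receives nothing, ``because $d$ factors through $\Gm$ and not $\Gm\times\Gm$''. You then compare $d\delta_jy$ only with $Q$-classes of $dy$. Check the bidegrees instead. Take $y\in\pi_{p,q}$ with $c=c(p,q)\ge2$; then $d\delta_jy$ lies in bidegree $(p+2q+j+1,2q+1)$.
\begin{itemize}
\item Its $D_2(X)$-component vanishes, by retracting along $\Gm\to *$.
\item $D_2(\Sigma^{1,1}X)$ has nothing in weight $2q+1$: by Corollary~\ref{cor:dtwomodtaudecomp} all its weights are $\ge 2q+2$.
\item The cross summand $\Sigma^{2p+1,2q+1}\MFtau$ has exactly the right bidegree when $j=c$.
\end{itemize}
So $d\delta_jy=0$ for $j<c$, but $d\delta_cy=\lambda\,y\,dy$ for a universal $\lambda\in\FF_2$. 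In fact $\lambda=1$. Apply the additive map $d$ and the Leibniz rule (1) to $\delta_c(x+y)=\delta_c(x)+\delta_c(y)+xy$ in $\NSym(x,y)$. This gives $(\lambda+1)(x\,dy+y\,dx)=0$ in the free algebra $\pi_{**}\NSym(x,y,dx,dy)$, where $x\,dy+y\,dx\neq 0$. Hence $d\delta_c(y)=y\,dy\neq 0$.

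Conceptually, the cross component comes from the transfer $D_2(X)\to X\otimes X$, which detects the divided square; it has nothing to do with the diagonal of $\Gm$. The cross summand also has the bidegree of $dQ_c(x)$, but there $Q_c(x)=x^2$ and $d(x^2)=0$ by (1), so no problem arises.

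The paper's own proof of (2) has the same blind spot. It lists $\Sigma^{2p+1,2q+1}\MFtau$ among the summands and then says there are no classes of the correct degree. The correct statement is $d\delta_jy=0$ for $j<c(y)$ and $d\delta_{c(y)}y=y\,dy$, and this is the relation that should be imposed when defining $G$.

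Apart from this, your route is the paper's. For (3), (4) and the $Q_i$, $i\le 0$, part of (2), you:
\begin{itemize}
\item reduce to the free algebra;
\item use $S^1$-stability to reach Chow degree $\le 0$;
\item pass to $\NSym_{\MF}$, where there is no $\tau$-torsion;
\item Betti-realize, so that $d$ becomes the $S^1$-suspension commuting with the $Q^i$;
\item fix the power of $\tau$ by comparing weights.
\end{itemize}
Your bookkeeping works: $2w+1+m(i)=2w+2+m(i-2)$ for $i\ge 1$, while for $i\le 0$ the left side is one less, so $dQ_ix=\tau\,Q_{i-2}dx$ over $\MF$.

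The paper handles $i\le 0$ by a bare degree count. Your $\tau$-argument is more robust, since it automatically excludes $Q_1x$, which has the bidegree of $dQ_0x$. Your side remark that $Q_{i-2}(dx)$ lies below the bottom operation is wrong when $c(x)<i\le 0$, but nothing depends on it.

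For (1) you use the diagonal of $\Gm$, whose reduced part dies because $\pi_{-1,-1}\MFtau=0$. (The class $[-1]$ is only $2$-divisible, not zero, in $\pi_{-1,-1}\1$.) The paper instead lifts $xy$ along $D_2'(X)=X\otimes X\to D_2(X)$. Both arguments are fine.
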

\begin{proof}
(1)
We may as well assume that $A = \NSym(\Sigma^{p,q}\MFtau) \otimes \NSym(\Sigma^{p',q'}\MFtau)$ with generators $x, y$ and analyze $d(xy)$.
Our task is to analyze the map \[ \Gm \otimes D_2(\Sigma^{p,q} \MFtau \oplus \Sigma^{p',q'}\MFtau) \to D_2(\Gm \otimes(\Sigma^{p,q} \MFtau \oplus \Sigma^{p',q'}\MFtau)), \] and specifically its effect on the class $\Gm \otimes xy$.
Consider the functor $D_2'(X) = X \otimes X$.
We have a natural transformation $D_2' \to D_2$ compatible with $\Gm$-suspension, and $xy$ lifts to $D_2'$.
It follows that we may read off $d(xy)$ from the analogous computation with $D_2'$ in place of $D_2$, which is straightforward.

(2--4)
We may as well assume that $A = \NSym(\Sigma^{p,q}\MFtau)$ and $x$ is the generator.
Then $\Gm \tensor{\MFtau} A \wequi A \otimes \NSym(\Sigma^{p+1,q+1}\MFtau)$.
Now if $Px$ is one of our power operations, then \[ dPx \in D_2(\Sigma^{p,q}\MFtau \oplus \Sigma^{p+1,q+1}\MFtau) \wequi D_2(\Sigma^{p,q}\MFtau) \oplus D_2(\Sigma^{p+1,q+1}\MFtau) \oplus \Sigma^{2p+1,2q+1} \MFtau \] (see \cite[Proposition 5.36]{arXiv:2104.01057} for the last equivalence).
For $P=\delta_j$ or $P=Q_i$ with $i \le 0$, we find that $dPx = 0$ because there are no classes of the correct degree, proving (2).
Now notice that in addition to a $\Gm$-suspension $d$, we also have an $S^1$-suspension $d'$, and these operations commute.
Since the operations $Q_i$ are $S^1$-stable (\ie commute with $d'$), it suffices to prove relations (3) and (4) when $p \ll 0$, \ie $x$ has negative Chow degree.
Moreover, we may as well prove a formula for the free normed $\MF$-algebra (not $\MFtau$-algebra!) on $x$ and then reduce modulo $\tau$.
But since this free normed algebra has no $\tau$-torsion (Theorem \ref{thm:NAlg-integral-pres}), we may check our relation after Betti realization.
There $\Gm$ and $S^1$-suspension coincide and the $Q^i$ commute with $d$.
The upshot is that for every $i$ there is a unique power of $\tau$ (conceivably negative) such that \[ Q_i dx = Q^{i+2q+2}dx \text{ and } \tau^? dQ^{i+2q+2}x = \tau^? dQ_{i+2} x \] have the same weight, and then these classes agree.
One finds that for $i \ge 0$, the exponent is always zero.
\end{proof}

\subsection{Algebraic approximation of normed tensors} \label{subsec:algebraic-approximation}
\subsubsection{Category $\NA$}
\begin{definition}
We write $\NA$ for the ordinary $1$-category whose objects are bigraded $\FF_2$-algebras $A$ together with operations \[ Q_i: A_{2w+c,w} \to A_{4w+c+i,2w+m(i)} \text{ for } i \le 2 \] and \[ \delta_i: A_{2w+c,w} \to A_{4w+c+i,2w} \text{ for } 2 \le i \le c, \] together with the vanishing, squaring, Cartan and Adem relations of Theorem \ref{thm:NAlg-ops}.
\end{definition}
See Remark \ref{rmk:free-NA-formula} for one justification of this somewhat peculiar choice of operations.
\begin{remark}
Let us spell these relations out in more detail.
\begin{itemize}
\item If $x$ has Chow degree $c \le 0$, then $Q_i(x) = 0$ for $i < c$ and $Q_c(x)=x^2$.
\item If $x$ has positive Chow degree then $x^2=0$.
\item For $i \le 0$ and $j > 1$, the Cartan and Adem relations for $Q_i$ and $\delta_j$ hold as they do for derived algebras (see Theorem \ref{thm:DAlg-relations}).
\item The Cartan relations for the $Q_i$ with $i>0$ take the form \[ Q_2(xy) = Q_2(x)Q_0(y) + Q_0(x)Q_2(y) \] and \[Q_1(xy) = Q_2(x)Q_{-1}(y) + Q_1(x)Q_0(y) + Q_0(x)Q_1(y) + Q_{-1}(x)Q_2(y). \]
\item The Adem relations for the $Q_i$ with $i>0$ take the form \[ Q_2Q_1 = Q_1 Q_2, \quad Q_1Q_1 = 0, \quad Q_i Q_j = 0 \text{ for } 1 \le i \le 2, j \le 0, \text{ and } Q_i \delta_j = 0 \text{ for } i=1,2. \]
\end{itemize}
Beware that these relations are different from the one observed in topology, where \eg $Q_1Q_1 \ne 0$.
\end{remark}

\begin{definition}
Given $E \in \NAlg(\Mod_{\MFtau})$ the homotopy groups $\pi_{**}E$ upgrade (by construction) to an object of $\NA$, and we denote the resulting functor by \[ \pi_{**}^\NA: \NAlg(\Mod_{\MFtau}) \to \NA. \]
\end{definition}

\subsubsection{Derived category of $\NA$} \label{subsub:derived-NA}
There is an evident forgetful functor $\NA \to \Vect_{\FF_2}^{\Z \times \Z}$.
It preserves limits and sifted colimits\NB{justification?}, and is conservative.
It follows that it has a left adjoint $F$.
It also follows (\eg from \cite[Proposition 5.5.8.22]{HTT}) that \[ \NA \wequi \PSh_\Sigma(\NA^\free)_{\le 0}, \]
where $\NA^\free \subset \NA$ is the full subcategory on $F$ applied to finitely generated bigraded $\FF_2$-vector spaces.
We put \[ D(\NA) := \PSh_\Sigma(\NA^\free), \] so that $\NA \wequi D(\NA)_{\le 0}$.

\begin{remark}\label{rmk:free-NA-formula}
It will be useful to have an explicit expression for the free algebras $F(x_{p,q})$ on a $1$-dimensional vector space placed in bidegree $(p,q)$.
By construction, these have a presentation like normed algebras, \ie in Theorem \ref{thm:NAlg-pres}, except that no $Q_i$ for $i>2$ are allowed.
In other words, to find a list of generators, we begin with op-admissible sequences of Voevodsky operations $Q_i$ where $i \in \{1,2\}$---the only ones are $Q_2^n(x_{p,q})$ and $Q_1Q_2^n(x_{p,q})$.
Then, if such a generator has Chow degree $>1$ we may apply an admissible and allowable sequence of $\delta$-operations, whereas if it has Chow degree $<0$, we can apply an op-admissible and allowable sequence of $Q_i$s with $i \le 0$ (it may be helpful to observe that $Q_2$ has Chow degree zero and $Q_1$ has Chow degree $-1$).
$F(x_{p,q})$ is free on the classes described, subject only to the relation that elements of positive Chow degree square to zero.
\end{remark}

\begin{lemma} \label{lemm:NSym-free}
Let $E \in \Mod_{\MFtau}^\cell$.
Then $\pi^\NA_{**}(E)$ lies in the closure of $\NA^\free$ under filtered colimits.
\end{lemma}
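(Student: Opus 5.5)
The plan is to reduce to a single generator and then identify $\pi^\NA_{**}\NSym(x_{p,q})$ explicitly as a filtered colimit of free objects. (I read the statement as concerning $\pi^\NA_{**}(\NSym E)$, since $\pi^\NA_{**}$ is only defined on normed algebras.)

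\emph{Step 1: reduce to a single generator.} By the equivalence $\Mod_{\MFtau}^\cell \wequi \Mod_{H\FF_2}^\Z$, every cellular $E$ splits as a sum $\bigoplus_\alpha \Sigma^{p_\alpha,q_\alpha}\MFtau$. This sum is the filtered colimit of its finite subsums. Both $\NSym$ and $\pi^\NA_{**}$ commute with filtered colimits: $\NSym$ preserves sifted colimits, and homotopy groups together with their operations are computed levelwise. The closure of $\NA^\free$ under filtered colimits is itself closed under filtered colimits, so it suffices to treat finite sums. For a finite sum, $\NSym$ sends it to $\bigotimes_\alpha \NSym(x_{p_\alpha,q_\alpha})$. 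Since everything is cellular over a field, Künneth gives $\pi_{**}$ of this tensor product as the tensor product of the $\pi_{**}$. In $\NA$ this tensor product is the coproduct, because the operations on products are determined by the Cartan relations. A coproduct of free objects is free, and a finite coproduct of filtered colimits of frees is again a filtered colimit of frees. So we reduce to $E = \Sigma^{p,q}\MFtau$.

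\emph{Step 2: decompose the Voevodsky part of the generators.} Consider the positive part $I_2$ of an op-admissible sequence, as in Remark \ref{rmk:op-admissible}. Reading from the inside out, the rule $i_s \le 2(i_{s+1}-m(i_{s+1}))$ forces the entry after a $2$ to be $\le 2$, and the entry after a $1$ to be $\le 0$. Hence every positive op-admissible sequence factors uniquely as $I_2 = P\cdot I'$, where:
\begin{itemize}
\item $P \in \{(2,\dots,2),\ (1,2,\dots,2)\}$, and
\item $I'$ is either empty or has first entry $>2$.
\end{itemize}
Let $\mathcal S$ be the set of such $I'$, and put $y_{I'} = Q_{I'}x_{p,q}$.

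\emph{Step 3: the comparison map.} We get a map in $\NA$
\[ \Phi\colon \textstyle\bigotimes_{I' \in \mathcal S} F(y_{I'}) \to \pi^\NA_{**}\NSym(x_{p,q}). \]
Compare the generators of the source, given by Remark \ref{rmk:free-NA-formula}, with those of the target, given by Theorem \ref{thm:NAlg-pres}(2). A generator $\delta_J Q_{I_1} Q_P (y_{I'})$ on the left corresponds to $\delta_J Q_{I_1 P I'} x$ on the right. This is a bijection once the side conditions match:
\begin{itemize}
\item Op-admissibility of $I_1 P I'$ splits into that of its pieces, by Remark \ref{rmk:op-admissible} together with the factorization of Step 2.
\item $(p,q)$-allowability and definedness of the $\delta$'s depend only on the Chow degree of the class being acted on (Lemma \ref{lemm:allowable-excess} and Remark \ref{rmk:op-chow-degree}). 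That Chow degree is the same whether computed from $y_{I'}$ or from $x$.
\end{itemize}
Both sides are free commutative algebras on these generators modulo only the vanishing of squares of positive-Chow-degree generators. Hence $\Phi$ is an isomorphism of algebras, and it is compatible with operations by construction.

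\emph{Step 4: conclude.} Finally, $\bigotimes_{\mathcal S}$ is the filtered colimit of its finite sub-tensor-products, each of which is free on finitely many generators, hence lies in $\NA^\free$.

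I expect Step 3 to be the main obstacle. The delicate point is checking that the bookkeeping of allowability and definedness really matches across the decomposition. Concretely, one must verify that $F(y)$ contains exactly the $Q_1,Q_2$-strings permitted in $\NSym$: for instance, that $Q_1$ on $y$ is allowable precisely when it is allowable on the corresponding class in $\NSym$. One must also check that no additional relation in $\NA$, such as $Q_1Q_1=0$ or $Q_iQ_j=0$ for $i\in\{1,2\}$, $j\le 0$, kills a generator that is nonzero in $\NSym$. These checks amount to confronting the relations in $\NA$ with the Adem relations of Theorem \ref{thm:NAlg-ops}.
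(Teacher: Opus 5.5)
Your proposal is correct and is the paper's argument written out in detail; your reading of the statement as concerning $\pi^\NA_{**}(\NSym E)$ is the intended one. The paper's one-line proof cites Remark~\ref{rmk:free-NA-formula} and Theorem~\ref{thm:NAlg-pres}, together with the observation that $Q_iQ_\epsilon$ with $\epsilon\le 2$, $i>2$ is never op-admissible (your Step~2 factorization), and the resulting description of $\pi^\NA_{**}\NSym(x_{p,q})$ as the free object of $\NA$ on the classes $Q_{I'}x_{p,q}$, with $I'$ op-admissible and avoiding $1$ and $2$, is exactly what the paper later invokes in the proof of Lemma~\ref{lemm:G-props}(2).
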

\begin{proof}
Immediate from Remark \ref{rmk:free-NA-formula} and Theorem \ref{thm:NAlg-pres} (noting that $Q_i Q_\epsilon$ with $\epsilon \le 2$, $i > 2$ is never op-admissible).
\end{proof}

Now suppose that $G: \NA \to \scr C$ is a functor and $\scr C$ admits sifted colimits (usually $\scr C$ will be an $\infty$-category).
Then we can, as usual, construct the \emph{derived functor} of $G$ as \[ D(G): D(\NA) = \PSh_\Sigma(\NA^\free) \to \scr C; \] via the universal property of $\PSh_\Sigma$ it is the unique sifted cocontinuous extension of $G|_{\NA^\free}$.

\subsubsection{Algebraic tensor}
Write $\DA$ for the category of bigraded $\FF_2$-algebras, together with operations $\delta_i$ and $Q_j$, but only for $j \le 0$, satisfying the relations of derived algebras, \ie Theorem \ref{thm:DAlg-relations}.
\begin{definition}
For $A \in \NA$, denote by $G(A) \in \DA$ the initial object under $A$ (viewed as an object of $\DA$), together with a map of bigraded vector spaces $d: \Sigma^{1,1}A \to G(A)$, such that the relations (1--3) of Lemma \ref{prop:Gm-susp} hold.
Since $\DA$ is presentable, $G(A)$ exists and this construction defines a functor $G: \NA \to \DA$.
\end{definition}
For $A \in \NAlg(\Mod_{\MFtau})$, we write $\pi_{**}^{\DA}(A) \in \DA$ for $\pi_{**}A$ with only the derived algebra operations remembered.
By construction, there is a natural transformation \[ \eta: G \pi_{**}^\NA \to \pi_{**}^{\DA}(\Gm \tensor{\MFtau} (\ph)) \in \Fun(\NAlg(\Mod_{\MFtau}), \DA). \]

\begin{example} \label{ex:compute-G}
If $A = F(x_{p,q})$ then $G(A) = A \otimes F_{\DA}(x_{p+1,q+1})$, where $F_{\DA}(x_{p+1,q+1})$ denotes the initial object of $\DA$ with a class in bidegree $(p+1,q+1)$.\NB{details?}
\end{example}

This construction satisfies the following properties.
\begin{lemma} \label{lemm:G-props}
\begin{enumerate}
\item The functor $G$ preserves colimits.
\item Let $X \in \Mod_{\MFtau}^\cell$.
  Then \[ \eta_{\NSym X}: G\pi_{**}^\NA\NSym X \to \pi_{**}^\DA(\Gm \tensor{\MFtau} \NSym X) \] is an isomorphism.
\end{enumerate}
\end{lemma}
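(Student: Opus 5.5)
For (1), I would reduce to a statement about left adjoints. By definition, $G(A)$ is the initial object of $\DA$ equipped with a map $A \to G(A)$ in $\DA$ and a bigraded linear map $d: \Sigma^{1,1}A \to G(A)$ satisfying relations (1--3) of Lemma \ref{prop:Gm-susp}. I would show that $G$ is a left adjoint by writing down its right adjoint $R: \DA \to \NA$.

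Given $B \in \DA$, let $R(B)$ be the set of pairs $(b, b')$ with $b \in B_{p,q}$ and $b' \in B_{p+1,q+1}$. Think of this as $B[\epsilon]$ with $\epsilon$ a formal class of degree $(1,1)$, $\epsilon^2 = 0$. The structure on $R(B)$ is as follows.
\begin{itemize}
\item The product is $(b,b')(c,c') = (bc, bc' + b'c)$.
\item $\delta_j$ and $Q_i$ for $i \le 0$ act by $(b,b') \mapsto (Pb, 0)$.
\item $Q_1$ and $Q_2$ act by $(b,b') \mapsto (0, Q_{-1}b')$ and $(0, Q_0 b')$ respectively.
\end{itemize}
A map of bigraded algebras $A \to R(B)$ commuting with operations is then exactly a $\DA$-map $A \to B$ together with a derivation $d$ obeying (1--3).

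The thing to check is that $R(B)$ really satisfies the $\NA$ relations. The relations involving only $Q_{i \le 0}$ and $\delta_j$ hold because $B$ is a derived algebra and the $\epsilon$-part is killed by these operations. For the relations involving $Q_1, Q_2$, I would verify each one listed after the definition of $\NA$. For the Cartan formula for $Q_2(xy)$, the $\epsilon$-component is $Q_0(bc' + b'c)$, which expands by the derived Cartan relation to $Q_0(b)Q_0(c') + Q_0(b')Q_0(c)$. This matches the right-hand side, since $Q_2$ of an element of $R(B)$ has zero first component and products of two $\epsilon$-terms vanish. For the Adem relations such as $Q_1Q_1 = 0$ and $Q_iQ_j = 0$, these hold since the $Q_{1,2}$ outputs have zero first component. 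The lower-degree cross terms are handled by the vanishing relations in $B$.

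Granting the adjunction, $G$ preserves all colimits. I expect this relation-checking to be the only real work in (1).

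For (2), both sides preserve sifted colimits in $X$:
\begin{itemize}
\item The left side does because $\pi^\NA_{**}\NSym$ does on the relevant diagrams and $G$ is cocontinuous.
\item The right side does because $\Gm \tensor{\MFtau} \NSym X \wequi \NSym(\Gm_+ \otimes X)$ by Example \ref{ex:nsym-enriched}.
\end{itemize}
Care is needed, since $\pi_{**}$ only commutes with filtered colimits and finite coproducts, not with geometric realizations. I would therefore reduce as follows. Every cellular $X$ is a filtered colimit of finite sums of spheres $\Sigma^{p,q}\MFtau$; use Lemma \ref{lemm:NSym-free} together with the fact that $\pi_{**}$ commutes with filtered colimits. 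Coproducts become tensor products on both sides, by Künneth over $\FF_2$ and since $G$ preserves coproducts. This reduces (2) to $X = \Sigma^{p,q}\MFtau$.

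In that case I would use the following identifications:
\begin{itemize}
\item $\Gm \tensor{\MFtau}\NSym(x_{p,q}) \wequi \NSym(x_{p,q}) \otimes \NSym(x_{p+1,q+1})$.
\item $\pi^\NA_{**}\NSym(x_{p,q})$ is known from Theorem \ref{thm:NAlg-pres}(2).
\item $G$ is computed by Example \ref{ex:compute-G} when the input is $\NA$-free.
\end{itemize}
The main obstacle is that $\pi^\NA_{**}\NSym(x_{p,q})$ is not itself $\NA$-free: it contains the generators $Q_iQ_2^n x$ with $i>2$. My plan is to write it as the free $\NA$-object on $x$ together with the extra classes $Q_I x$ whose first Voevodsky index exceeds $2$, i.e.\ the classes $Q_j Q_{J}x$ with $j \ge 3$ and $J$ op-admissible.

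Then $G$ sends each such free generator $y$ to $F(y)\otimes F_{\DA}(dy)$. Using Lemma \ref{prop:Gm-susp}(4), $dQ_jx = Q_{j-2}dx$, these suspensions land exactly on the generators $Q_{I'}(dx)$ of $\pi_{**}\NSym(x_{p+1,q+1})$ from Theorem \ref{thm:NAlg-pres}(2). The remaining step is a bijection of generators, comparing op-admissibility and allowability via Lemma \ref{lemm:allowable-excess}, together with checking that the $\delta$-operations applied to the suspended generators also match. I expect this bookkeeping, showing that $\eta$ is a bijection on polynomial/exterior generators, to be the delicate step.
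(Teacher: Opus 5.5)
The gap is in part (1), and part (2) depends on it for coproducts. Your part (2) is otherwise essentially the paper's argument: reduce to a single sphere, use that $\pi^{\NA}_{**}\NSym(x_{p,q})$ is $\NA$-free on the $Q_Ix_{p,q}$ with $I$ avoiding $1,2$, match $Q_Ix\mapsto dQ_Ix=Q_{I-2}\,dx$ (entrywise shift) with the $\DA$-generators via Lemma~\ref{prop:Gm-susp}(4), and conclude with Example~\ref{ex:compute-G}. (Your ``$Q_jQ_Jx$, $j\ge 3$'' should require $(j,J)$ op-admissible, which forces every entry to be $\ge 3$.)

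\textbf{Your $R(B)$ is not right adjoint to $G$.} The structure map $A\to G(A)$ is only required to be a map in $\DA$, i.e.\ $A$ is seen with $Q_1,Q_2$ forgotten. You set the first component of $Q_1(b,b')$ and $Q_2(b,b')$ to $0$. Hence an $\NA$-map $a\mapsto(f(a),d(a))$ into $R(B)$ must satisfy $f(Q_1a)=f(Q_2a)=0$, which is not part of $G$'s universal property. So $R$ is right adjoint to a different functor $G'$. For example, $\mathrm{Hom}_{\NA}(F(x_{p,q}),R(B))=B_{p,q}\oplus B_{p+1,q+1}$ gives $G'(F(x_{p,q}))=F_{\DA}(x_{p,q})\otimes F_{\DA}(x_{p+1,q+1})$. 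By contrast, Example~\ref{ex:compute-G}, which you invoke in (2), has all of $F(x_{p,q})$ (containing $Q_2x$, $Q_1x$, $Q_2^2x,\dots$) as the first factor. These classes genuinely survive in $\pi_{**}(\Gm\tensor{\MFtau}A)$, since $A\to\Gm\tensor{\MFtau}A$ is split by $\Gm\to *$.

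To repair the adjoint you would have to replace the first component by a cofree $\NA$-object on $B$ and take a square-zero extension of it. That cofree object is a right adjoint to the forgetful functor $\NA\to\DA$, and its existence is a separate claim. The paper avoids adjoints altogether. Sifted colimits are preserved because the construction is compatible with the forgetful functors to bigraded vector spaces. Finite coproducts are preserved because $G(A)$ is a quotient of the free object of $\DA_{A/}$ on $\Omega^1_A$, and $\Omega^1_{A\otimes B}\simeq\Omega^1_A\oplus\Omega^1_B$.

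\textbf{Your relation check also fails as stated.} Take $(b,b'),(e,e')\in R(B)$ of Chow degree $c\ge 2$. Your formula gives $\delta_c$ of their sum as $(\delta_cb+\delta_ce+be,\,0)$. The additivity relation requires $(\delta_cb+\delta_ce+be,\,be'+b'e)$.

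No choice with vanishing $\epsilon$-component for $\delta_c$ can work. If $d$ is a derivation and $d\delta_c=0$, apply $d$ to $\delta_c(x+y)=\delta_cx+\delta_cy+xy$. This forces $x\,dy+y\,dx=0$ for $x,y$ of equal bidegree and Chow degree $c$. That equation is false in $\pi_{**}(\Gm\tensor{\MFtau}\NSym(x,y))$, where $x\,dy$ and $y\,dx$ are distinct nonzero monomials.

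The class $x\,dx$ lies exactly in the bidegree of $d\delta_c(x)$; it is the cross-term summand appearing in the proof of Lemma~\ref{prop:Gm-susp}. So the version of relation (2) there that is compatible with the Leibniz rule is $d\delta_c(x)=x\,dx$, together with $d\delta_j(x)=0$ for $j<c$. With $\delta_c(b,b')=(\delta_cb,\,bb')$, your additivity check goes through. Whatever route you take for (1), this case must be treated explicitly rather than dismissed by ``the $\epsilon$-part is killed''.
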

\begin{proof}
(1) Preservation of sifted colimits is clear (since the forgetful functor $\NA \to \Vect_{\FF_2}^{\Z\times\Z}$ preserves them).
For finite sums (which are given by tensor products of the underlying vector spaces), note that $G(A)$ is a quotient of the free object in $\DA_{A/}$ on $\Omega^1_A$, and that $\Omega^1_{A \otimes B} \wequi \Omega^1_A \oplus \Omega^1_B$.\NB{expand?}

(2) Via (1) we may assume that $X = \Sigma^{p,q}\MFtau$.
Then $\Gm \tensor{\MFtau} \NSym X \wequi \NSym(X) \otimes \NSym(\Sigma^{1,1}X)$ (see Example \ref{ex:nsym-enriched}).
Thus $\pi_{**}^{\DA}(\Gm \tensor{\MFtau} \NSym X)$ is the tensor product of $\pi_{**}^{\DA}(\NSym X)$ and $\pi_{**}^{\DA}(\NSym \Sigma^{1,1}X)$.
Note that by (the proof of) Lemma \ref{lemm:NSym-free}, $\pi_{**}^{\NA}(\NSym X)$ is the free object (in $\NA$) on the admissible Voevodsky operations $Q_I x_{p,q}$, where $I$ (consisting of positive integers) does not involve $1$ or $2$.
Similarly $\pi_{**}^{\DA}(\NSym \Sigma^{1,1}X)$ is the free object (in $\DA$) on all the admissible Voevodsky operations $Q_I dx_{p,q}$.
On the other hand, using Lemma \ref{prop:Gm-susp} we see that $d$ sets up a bijection between admissible Voevodsky operations $Q_I x_{p,q}$ not involving $Q_1$ or $Q_2$ and the admissible Voevodsky operations $Q_I dx_{p,q}$.
The claim thus reduces to Example \ref{ex:compute-G}.
\end{proof}

\subsection{Conclusion of the proof} \label{subsec:conclusion}
Let $A \in \NAlg(\Mod_{\MFtau})$.
To attempt to compute $\pi_{**}(\Gm \tensor{\MFtau} A)$, we can first resolve $A$ by \emph{free} normed spectra.
We can do this via the free-forgetful adjunction.
The resolution takes the form of an augmented simplicial object \[ X(A) := \NSym^{\bullet + 1}A \to A \in \Fun(\Delta^\op_+, \NAlg(\Mod_{\MFtau})). \]
The forgetful functor being conservative and preserving sifted colimits, this is indeed a resolution, in the sense that $|X(A)| \wequi A$ (since $X(A)$ splits after applying the forgetful functor).
We thus find that \[ \Gm \tensor{\MFtau} A \wequi |\Gm \tensor{\MFtau} \NSym^{\bullet+1} A|. \]
Out of this simplicial object we extract a spectral sequence, converging to $\pi_{**}(\Gm \tensor{\MFtau} A)$.

\begin{lemma} \label{lemm:GmTHH-spectralsequence}
If $A$ is cellular, the $E^2$-page of the spectral sequence is given by $\pi_* (DG')(\pi_{**}^\NA A)$.
Here by $DG': D(\NA) \to D(\DA)$ we denote the derived functor of $G': \NA \xrightarrow{G} \DA \hookrightarrow D(\DA)$ in the sense of \S\ref{subsub:derived-NA} (and $D(\DA) := \PSh_\Sigma(\DA^\free)$, with $\DA^\free$ defined analogously to $\NA^\free$).
\end{lemma}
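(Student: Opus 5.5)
The plan is to identify the $E^1$-page levelwise and then recognise the resulting simplicial object as a free resolution in $D(\NA)$. The spectral sequence is the homotopy spectral sequence of the simplicial object $[n] \mapsto \Gm \tensor{\MFtau} \NSym^{n+1}A$. Its $E^1$-page is
\[ E^1_{n} = \pi_{**}(\Gm \tensor{\MFtau} \NSym^{n+1}A), \]
and $E^2$ is the homotopy of this simplicial bigraded object, i.e.\ of the associated normalized chain complex.

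First, I will identify each term. Since $A$ is cellular and $\NSym$ preserves cellular modules (\S\ref{subsec:normed-versus-derived}), each $\NSym^{n}A$ is cellular. Writing $X_n := U\NSym^{n}A \in \Mod_{\MFtau}^\cell$, the $n$-th term is $\Gm \tensor{\MFtau} \NSym X_n$. Lemma \ref{lemm:G-props}(2) then gives a natural isomorphism in $\DA$
\[ \eta: G\pi_{**}^\NA(\NSym X_n) \xrightarrow{\sim} \pi_{**}^\DA(\Gm \tensor{\MFtau} \NSym X_n). \]
Because $\eta$ is a natural transformation of functors on $\NAlg(\Mod_{\MFtau})$, it is compatible with all face and degeneracy maps, including those not induced by maps of the form $\NSym(f)$. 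Hence the $E^1$-page, as a simplicial object of $\DA$, is $G$ applied to the simplicial object $B_\bullet := \pi_{**}^\NA(\NSym^{\bullet+1}A)$ in $\NA$.

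Second, I will show that $B_\bullet$ computes the derived object in $D(\NA)$. By Lemma \ref{lemm:NSym-free}, each $B_n$ lies in the filtered-colimit closure of $\NA^\free$, so it is a $0$-truncated object of $\PSh_\Sigma(\NA^\free)$ that is a filtered colimit of representables. Since $DG'$ preserves sifted colimits and agrees with $G'$ on $\NA^\free$, it therefore agrees with $G'$ on each $B_n$.

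The augmented simplicial object $B_\bullet \to \pi_{**}^\NA A$ acquires extra degeneracies after forgetting to bigraded vector spaces, because $U X(A) \to UA$ is split. So $B_\bullet$ is a simplicial resolution whose underlying simplicial vector space has homotopy $\pi_{**}A$ in degree $0$ and zero elsewhere. Since the forgetful functor $D(\NA)\to D(\Vect^{\Z\times\Z})$ is conservative and preserves geometric realizations, $|B_\bullet| \simeq \pi_{**}^\NA A$ in $D(\NA)$. Applying $DG'$, which commutes with geometric realization, yields
\[ DG'(\pi_{**}^\NA A) \simeq |G'(B_\bullet)| \simeq |E^1_\bullet|. \]
Taking homotopy groups in $D(\DA)$, which is a $\PSh_\Sigma$ category whose homotopy is computed on underlying simplicial vector spaces by the normalized chain complex, gives $E^2 = \pi_*(DG')(\pi_{**}^\NA A)$.

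The main obstacle is the first step: making precise that the isomorphism $\eta$ of Lemma \ref{lemm:G-props}(2) is natural with respect to the simplicial structure maps of $X(A)$. The face maps involve the monad multiplication and the counit, and these are not maps of free modules. Naturality of $\eta$ as a transformation on all of $\NAlg(\Mod_{\MFtau})$ should suffice, but I would check this carefully.

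A secondary point is the identification of $DG'$ on filtered colimits of free objects, where one needs $\NA^\free$ to consist of compact projective generators. This should follow from the forgetful functor to $\Vect_{\FF_2}^{\Z\times\Z}$ preserving filtered colimits.
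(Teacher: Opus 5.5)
Your proposal is correct and follows essentially the same route as the paper. You identify the terms levelwise via the natural isomorphism $\eta$ of Lemma \ref{lemm:G-props}(2), recognise $\pi_{**}^\NA X(A)$ as a colimit diagram in $D(\NA)$ using the splitting after forgetting, and then use that $DG'$ preserves colimits. Two remarks. First, the step ``$DG'$ agrees with $G'$ on each $B_n$'' also requires that $G$ itself preserve the relevant filtered colimits, which is Lemma \ref{lemm:G-props}(1); you should cite this explicitly. Second, your worry about the simplicial structure maps is a non-issue: $\eta$ is a natural transformation on all of $\NAlg(\Mod_{\MFtau})$, so it is compatible with every face and degeneracy map.
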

\begin{proof}
It follows from Lemma \ref{lemm:G-props}(2) (and the fact that $\NSym$ preserves cellular objects, see \S\ref{subsec:normed-versus-derived}) that the morphism of simplicial objects \[ G\pi_{**}^\NA\NSym^{\bullet + 1}A \to \pi_{**}^{\DA}(\Gm \tensor{\MFtau} \NSym^{\bullet + 1}A) \] is an isomorphism.
By construction, the homotopy groups of the object on the right hand side constitute the $E^2$-page.
Let us view $\pi_{**}^\NA\NSym^{\bullet + 1}A \to \pi_{**}^\NA A$ as an augmented simplicial object $\pi_{**}^\NA X(A)$ in $D(\NA)$.
Consider the commutative square
\begin{equation*}
\begin{CD}
\NAlg(\Mod_{\MFtau}) @>{\pi_{**}^\NA}>> \NA \\
@VUVV                  @VUVV \\
\Mod_{\MFtau} @>{\pi_{**}}>> \Vect_{\FF_2}^{\Z\times\Z}.
\end{CD}
\end{equation*}
Since $UX(A)$ is split, so is $U\pi_{**}^\NA X(A)$.
But $D(\NA) \to D(\Vect_{\FF_2}^{\Z\times\Z})$ is conservative and preserves sifted colimits, and so we conclude that $\pi_{**}^\NA X(A)$ is a colimit diagram in $D(\NA)$.
Now $D(G')$ preserves colimits, and hence we are reduced to showing that $DG' \to G$ applied to $\pi_{**}^\NA\NSym^{\circ n}A$ is an equivalence.
This follows from Lemmas \ref{lemm:G-props}(1) and \ref{lemm:NSym-free}.
\end{proof}

Recall that we write $F$ for the left adjoint of the forgetful functor $\NA \to \Vect_{\FF_2}^{\Z\times\Z}$.
\begin{lemma} \label{lemm:steenrod-free}
The canonical map \[ F(\tau_0) \to \pi_{**}^\NA(\MF \otimes \MFtau) \in \NA \] is an isomorphism.
\end{lemma}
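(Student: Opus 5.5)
The plan is to compare both sides as bigraded $\FF_2$-vector spaces. We first write down the motivic dual Steenrod algebra explicitly and then match it against the basis of $F(\tau_0)$ from Remark \ref{rmk:free-NA-formula}.

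First, recall the homotopy. Over $\bar k$ we have $\pi_{**}(\MF\otimes\MF) = \FF_2[\tau][\tau_0,\tau_1,\dots,\xi_1,\xi_2,\dots]/(\tau_i^2 = \tau\xi_{i+1})$, with $\tau_i$ in bidegree $(2^{i+1}-1,2^i-1)$ and $\xi_i$ in bidegree $(2^{i+1}-2,2^i-1)$. This homotopy is $\tau$-torsion free, so reducing mod $\tau$ gives
\[ \pi_{**}(\MF\otimes\MFtau) = \Lambda_{\FF_2}(\tau_0,\tau_1,\dots)\otimes\FF_2[\xi_1,\xi_2,\dots]. \]
The Chow degrees are $c(\tau_i)=1$ and $c(\xi_i)=0$.

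Second, compute the power operations on $\tau_0$. I will show that in $\pi_{**}(\MF\otimes\MFtau)$, viewed in $\NA$, the following hold:
\begin{itemize}
\item $Q_2^n(\tau_0)=\tau_n$;
\item $Q_1Q_2^{n}(\tau_0)=\xi_{n+1}$, up to adding decomposables;
\item all $\delta$-operations on the $\tau_n$ vanish.
\end{itemize}
To get these, I use the motivic Steenrod/Nishida-type formulas for Voevodsky operations on the dual Steenrod algebra, as in \cite{bachmann-wilson} or the classical formulas. There one has $Q^{2^{n+1}}\tau_n=\tau_{n+1}$ and $Q^{2^{n+1}-1}\tau_n = \xi_{n+1}$ up to $\tau$-multiples. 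I will check that the lower indexing $Q_k = Q^{k+2w}$ with $w=2^n-1$ turns these into $Q_2$ and $Q_1$. The $\delta$-vanishing is forced by degree: $\delta_j$ needs Chow degree $\ge 2$, and $\tau_n$ has Chow degree $1$.

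Third, use the explicit free object. By Remark \ref{rmk:free-NA-formula}, $F(x_{1,0})$ has generators $Q_2^n x$ (Chow degree $1$) and $Q_1Q_2^n x$ (Chow degree $0$). No further operations apply. Chow degree $1$ admits no $\delta$, since $2\le i\le c$ is empty. Chow degree $0$ is not negative, so only $Q_0$ applies, and $Q_0=$ squaring. Hence $F(x_{1,0})$ is the exterior algebra on the $Q_2^n x$ tensored with the polynomial algebra on the $Q_1Q_2^n x$, since positive Chow degree classes square to zero. Combining with the second step, the map $F(\tau_0)\to\pi_{**}^\NA(\MF\otimes\MFtau)$ is a map of algebras that sends generators to generators modulo decomposables. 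An algebra map between such free-type algebras that is bijective on indecomposables is an isomorphism, by a filtration-by-word-length argument.

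The main obstacle is the second step: pinning down $Q_1 Q_2^n \tau_0 \equiv \xi_{n+1}$ with the correct motivic indexing. I would first try to deduce it from the integral normed $\MF$-algebra $\MF\otimes\MF$, using that it is $\tau$-torsion free. The identity can then be checked after Betti realization, where it becomes the classical fact that in $H_*(H\FF_2)$ one has $Q^{2^{n+1}}\bar\xi_{n+1}=\bar\xi_{n+2}$, which is Steinberger's computation. Weight considerations then show that no $\tau$-power intervenes. Finally, reduce mod $\tau$.
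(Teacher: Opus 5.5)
Your proposal is correct and is essentially the paper's proof. It computes $F(\tau_0)$ from Remark~\ref{rmk:free-NA-formula} as an exterior algebra on the $Q_2^n\tau_0$ tensor a polynomial algebra on the $Q_1Q_2^n\tau_0$, and then matches these generators with the standard generators $\tau_n,\xi_{n+1}$ of the dual motivic Steenrod algebra modulo decomposables. The only difference is that the paper simply cites \cite[Corollary 5.34]{arXiv:2210.07150} for this identification (up to conjugation), whereas you sketch a proof via $\tau$-torsion-freeness of $\pi_{**}(\MF\otimes\MF)$ and Betti realization; there, for the $Q_1$ identity, the real input is $\tau\,Q_1(\tau_n)=\tau_n^2=\tau\xi_{n+1}$ (squaring), while Steinberger's formula is what handles the $Q_2$ identity.
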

\begin{proof}
By Remark \ref{rmk:free-NA-formula}, $F(\tau_0)$ is a generated by $Q_2^n(\tau_0)$ and $Q_1Q_2^n(\tau_0)$.
The former have Chow degree $1$ and so squares to zero; the latter have Chow degree $0$.
We cannot apply any further topological operations (for degree reasons), and there are no further relations.
It is shown in \cite[Corollary 5.34]{arXiv:2210.07150} that these are (up to conjugation) the standard generators of the dual motivic Steenrod algebra.
This concludes the proof.
\end{proof}

Now we can finally prove our main result.
\begin{proof}[Proof of Theorem \ref{thm:main}.]
At the end of \S\ref{sec:main-thm} we had reduced to showing that \[ \pi_{**}(\Gm \tensor{\MFtau} (\MF \otimes \MFtau)) \wequi \pi_{**}(\MF \otimes \MFtau)[\mu_{2,1}]. \]
Lemma \ref{lemm:GmTHH-spectralsequence} supplies us with a spectral sequence for computing the left hand side, with $E^2$-page given by $DG'$ applied to $\pi_{**}^\NA(\MF \otimes \MFtau)$.
By Lemma \ref{lemm:steenrod-free}, $\pi_{**}^\NA(\MF \otimes \MFtau)$ is free and so $DG'$ coincides with $G$.
The spectral sequence thus collapses and the result follows via Example \ref{ex:compute-G}.
\end{proof}

\bibliographystyle{plain}
\bibliography{gmp1mhh}

\begin{thebibliography}{10}

\bibitem{antieau2025filtrations}
Benjamin Antieau.
\newblock Filtrations and cohomology i: crystallization.
\newblock {\em arXiv preprint arXiv:2511.01567}, 2025.

\bibitem{bachmann-SHet}
T.~Bachmann.
\newblock Rigidity in étale motivic stable homotopy theory.
\newblock {\em Algebraic \& Geometric Topology}, 21(1):173--209, 2021.

\bibitem{arXiv:2104.01057}
T.~Bachmann, E.~Elmanto, and J.~Heller.
\newblock Motivic colimits and extended powers.
\newblock Preprint, {arXiv}:2104.01057, 2021.

\bibitem{arXiv:2210.07150}
T.~Bachmann, E.~Elmanto, and J.~Heller.
\newblock Normed motivic spectra and power operations.
\newblock Preprint, {arXiv}:2210.07150, 2022.

\bibitem{arXiv:2210.07137}
T.~Bachmann, E.~Elmanto, and J.~Heller.
\newblock Splitting results for normed motivic spectra.
\newblock Preprint, {arXiv}:2210.07137 [math.{KT}] (2022), 2022.

\bibitem{norms}
T.~Bachmann and M.~Hoyois.
\newblock {\em Norms in motivic homotopy theory}, volume 425 of {\em
  Ast{\'e}risque}.
\newblock Paris: Soci{\'e}t{\'e} Math{\'e}matique de France (SMF), 2021.

\bibitem{bachmann-linearity}
Tom Bachmann.
\newblock Motivic stable homotopy theory is strictly commutative at the
  characteristic.
\newblock {\em Advances in Mathematics}, 410:108697, 2022.
\newblock \href{https://arxiv.org/abs/2111.02320}{arXiv:2111.02320}.

\bibitem{bachmann-burklund-tau}
Tom Bachmann and Robert Burklund.
\newblock Manuscript in preparation.

\bibitem{bachmann-stems}
Tom Bachmann, Robert Burklund, and Zhouli Xu.
\newblock Motivic stable stems and {G}alois approximations of cellular motivic
  categories.
\newblock \href{https://arxiv.org/abs/2501.12060}{arXiv:2501.12060}, 2025.

\bibitem{bachmann-slice}
Tom Bachmann and Elden Elmanto.
\newblock Voevodsky's slice conjectures via {H}ilbert schemes.
\newblock {\em Algebraic Geometry}, 8(5):626--636, 2021.
\newblock \href{https://arxiv.org/abs/1912.01595}{arXiv:1912.01595}.

\bibitem{bachmann-wilson}
Tom Bachmann and Michael~J. Hopkins.
\newblock Motivic {W}ilson spaces.
\newblock in preparation.

\bibitem{MR836132}
R.~R. Bruner, J.~P. May, J.~E. McClure, and M.~Steinberger.
\newblock {\em {$H\sb \infty $} ring spectra and their applications}, volume
  1176 of {\em Lecture Notes in Mathematics}.
\newblock Springer-Verlag, Berlin, 1986.

\bibitem{triangulated-mixed-motives}
D.-C. Cisinski and F.~D\'{e}glise.
\newblock {\em Triangulated categories of mixed motives}.
\newblock Springer Monographs in Mathematics. Springer-Verlag, 2019.

\bibitem{zbMATH07937618}
B.~I. Dundas, M.~A. Hill, K.~Ormsby, and P.~A. {\O}stv{\ae}r.
\newblock Hochschild homology of mod-{{\(p\)}} motivic cohomology over
  algebraically closed fields.
\newblock {\em Commun. Am. Math. Soc.}, 4:578--606, 2024.

\bibitem{MR574789}
W.~G. Dwyer.
\newblock Homotopy operations for simplicial commutative algebras.
\newblock {\em Trans. Amer. Math. Soc.}, 260(2):421--435, 1980.

\bibitem{gepner2015enriched}
David Gepner and Rune Haugseng.
\newblock Enriched $\infty$-categories via non-symmetric $\infty$-operads.
\newblock {\em Advances in mathematics}, 279:575--716, 2015.

\bibitem{MR1089001}
Paul~G. Goerss.
\newblock On the {A}ndr\'{e}-{Q}uillen cohomology of commutative {${\bf
  F}_2$}-algebras.
\newblock {\em Ast\'{e}risque}, (186):169, 1990.

\bibitem{MR1260166}
Paul~G. Goerss and Thomas~J. Lada.
\newblock Relations among homotopy operations for simplicial commutative
  algebras.
\newblock {\em Proc. Amer. Math. Soc.}, 123(9):2637--2641, 1995.

\bibitem{MR4490929}
Achim Krause and Thomas Nikolaus.
\newblock B\"{o}kstedt periodicity and quotients of {DVR}s.
\newblock {\em Compos. Math.}, 158(8):1683--1712, 2022.

\bibitem{kylling2018hermitian}
Jonas~Irgens Kylling, Oliver R{\"o}ndigs, and Paul~Arne {\O}stv{\ae}r.
\newblock Hermitian {$K$}-theory, dedekind $\zeta$-functions, and quadratic
  forms over rings of integers in number fields.
\newblock {\em arXiv preprint arXiv:1811.03940}, 2018.

\bibitem{MR4197999}
Tyler Lawson.
\newblock {$E_n$}-spectra and {D}yer-{L}ashof operations.
\newblock In {\em Handbook of homotopy theory}, CRC Press/Chapman Hall Handb.
  Math. Ser., pages 793--849. CRC Press, Boca Raton, FL, [2020] \copyright
  2020.

\bibitem{HTT}
J.~Lurie.
\newblock {\em Higher topos theory}, volume 170 of {\em Ann. Math. Stud.}
\newblock Princeton, NJ: Princeton University Press, 2009.

\bibitem{HA}
J.~Lurie.
\newblock \textit{Higher Algebra}.
\newblock \url{https://people.math.harvard.edu/~lurie/papers/HA.pdf}, 2017.

\bibitem{mathew2025affine-stacks}
Akhil Mathew and Shubhodip Mondal.
\newblock Affine stacks and derived rings.
\newblock \url{https://www.math.purdue.edu/~mondalsh/papers/affinestacks.pdf},
  2025.

\bibitem{MR242149}
R.~James Milgram.
\newblock The homology of symmetric products.
\newblock {\em Trans. Amer. Math. Soc.}, 138:251--265, 1969.

\bibitem{MR342592}
Stewart Priddy.
\newblock Mod-$p$ right derived functor algebras of the symmetric algebra
  functor.
\newblock {\em J. Pure Appl. Algebra}, 3:337--356, 1973.

\bibitem{raksit2020hochschild}
Arpon Raksit.
\newblock Hochschild homology and the derived de~{R}ham complex revisited.
\newblock {\em arXiv preprint arXiv:2007.02576}, 2020.

\bibitem{zbMATH02111676}
V.~Voevodsky.
\newblock Reduced power operations in motivic cohomology.
\newblock {\em Publ. Math., Inst. Hautes {\'E}tud. Sci.}, 98:1--57, 2003.

\end{thebibliography}

\end{document}